\newtheorem{theorem}{Theorem}[section]
\newtheorem{proposition}[theorem]{Proposition} 
\newtheorem{lemma}[theorem]{Lemma}
\theoremstyle{definition}
\newtheorem{definition}[theorem]{Definition}
\newtheorem{question}[theorem]{Question} 
\newtheorem{example}[theorem]{Example}
\newtheorem{remark}[theorem]{Remark}
\theoremstyle{remark}
\theoremstyle{corollary}
\newtheorem{corollary}[theorem]{Corollary}
\theoremstyle{remark}
\newtheorem*{claim}{Claim}
\numberwithin{equation}{section}
\DeclareMathOperator{\ax}{Ax}
\def\diam{{\mathop\mathrm{\,diam\,}}}
\def\scl{\mathrm{scl}}
\def\ax{\mathrm{Ax}}
\def\cl{\mathrm{cl}}
\def\Mod{\mathrm{Mod}}
\def\F{\mathcal F}
\def\C{\mathcal C}
\def\P{\mathcal P}
\def\G{\mathcal G}
\def\R{\mathbb R}
\def\X{\mathbb X}
\def\Z{\mathbb Z}
\def\N{\mathbb N}
\def\llangle{\langle\langle}
\def\rrangle{\rangle\rangle}
\newcommand{\revise}[1]{{\color{black}{#1}}}
\newcommand{\rev}[1]{{\color{black}{#1}}}
\begin{document}

\title{Relative Bounded Cohomology on Groups with Contracting Elements}

\author{Zhenguo Huangfu}
\address{Institute of Mathematical Sciences\\ ShanghaiTech University\\
 Shanghai 201210, China P.R. }
\email{huangfuzhg@shanghaitech.edu.cn}

\author{Renxing Wan}
\address{School of Mathematical Sciences,  Key Laboratory of MEA (Ministry of Education) \& Shanghai Key Laboratory of PMMP,  East China Normal University, Shanghai 200241, China P. R.}
\email{rxwan@math.ecnu.edu.cn}

\keywords{Contracting element, Projection complex, Quasimorphism, Relative bounded cohomology}

\begin{abstract}
    Let $G$ be a countable group acting properly on a metric space with contracting elements and $\{H_i:1\le i\le n\}$ be a finite collection of Morse subgroups in $G$. We prove that each $H_i$ has infinite index in $G$ if and only if the relative second bounded cohomology $H^{2}_b(G, \{H_i\}_{i=1}^n; \mathbb{R})$ is infinite-dimensional. In addition, we also prove that for any contracting element $g$, there exists $k>0$ such that $H^{2}_b(G, \langle \langle g^k\rangle \rangle; \mathbb{R})$ is infinite-dimensional. Our results generalize a theorem of Pagliantini-Rolli for finite-rank free groups and yield new results on the (relative) second bounded cohomology of groups.

\end{abstract} 

\maketitle

\section{Introduction}
\subsection{(Relative) bounded cohomology}
Bounded cohomology  was introduced by Johnson and Trauber in the context of Banach algebra and developed into a comprehensive and rich theory by Gromov in his seminal paper ``Volume and bounded cohomology'' \cite{Gro82}. Since then, it has become a fundamental tool in several fields, most notably in the study of the geometry of manifolds. Many properties, in particular geometric ones, \rev{of a group, can be} characterized by its bounded cohomology. In particular, the notion of bounded cohomology allows us to retrieve certain negatively curved features of the group.

The theory of quasimorphisms has been extensively exploited to study the second bounded cohomology of a group. To be precise, a \textit{quasimorphism} on a group $G$ is a map $\phi: G\rightarrow \mathbb{R}$ such that

\begin{center}
$\sup\limits_{g,h \in G}|\phi(gh)-\phi(g)-\phi(h)|<\infty$.
\end{center}

\noindent That is to say, a quasimorphism is \revise{locally close to} a genuine homomorphism from the group to \rev{$\mathbb{R}$}. We denote by $\rm{QM}(G)$ the $\mathbb R$-vector space of quasimorphisms. It \revise{follows from the definitions} that the coboundary of a quasimorphism is a bounded 2-cocycle and therefore there is a linear map $$\rm{QM}(G)\to H^2_b(G;\mathbb R).$$
Moreover, the image of this map is the kernel of the comparison map $H^2_b(G;\mathbb R)\to H^2(G;\mathbb R)$ induced by the inclusion of bounded cochains into ordinary cochains \revise{\cite[Theorem 2.50]{Cal09}}.

The first example of this approach is using Brooks counting quasimorphism \cite{Bro81} to show that the rank-2 free group $F_2$ has infinite dimensional second bounded cohomology.

More generally, D. Epstein and K. Fujiwara \cite{EF97} showed that a non-elementary hyperbolic group has infinite dimensional second bounded cohomology. \revise{This was proved by using a modified version of Brooks counting quasimorphism.}  Later, Fujiwara used this generalization to obtain the same conclusion regarding the dimension of the second bounded cohomology of  a group acting properly on a hyperbolic space \cite{Fuj98}.

When dealing with manifolds with boundary, we naturally consider the relative homology and cohomology. Similarly, in group theory, we often investigate the subgroups in order to \revise{explore} properties of the ambient group. M. Gromov provided a definition of relative bounded cohomology between pairs of topological spaces and also pairs of groups in \cite{Gro82}. This leads to many applications in geometry and topology and other fields as well.

In retrospect, absolute bounded cohomology of many classes of groups are already known to be infinite dimensional, as demonstrated in the following examples:

\begin{enumerate}
    \item
    non-elementary Gromov hyperbolic groups \cite{EF97};
    \item
    groups with infinitely many ends \cite{Fuj00};
    \item
    groups admitting a non-elementary proper discontinuous action on a Gromov hyperbolic space \cite{Fuj98};
    \item
    groups admitting a non-elementary weakly proper discontinuous action on a Gromov hyperbolic space \cite{BF02} (see also \cite{Ham08});
    \item
    groups admitting a non-elementary weakly proper discontinuous action on a CAT(0) space which contains a rank one isometry \cite{BF09};
    \item
    acylindrically hyperbolic groups \cite{HO13}.
\end{enumerate}

\revise{Gromov developed the theory of bounded cohomology in order to compute simplicial volume of manifolds. In particular, the understanding of
simplicial volume of manifolds with non-empty boundary could be increased by studying bounded cohomology of pairs of space and pairs of groups, which is the so-called \textit{relative bounded cohomology}. However,} very few results on relative bounded cohomology are known. The first computation of the relative bounded cohomology of a specific class of groups is given by C. Pagliantini and P. Rolli in \cite{PR15}. \revise{Besides, it was proved independently by Bucher-Burger-Frigerio-Iozzi-Pagliantini-Pozzetti \cite{BBFIPP14} and Kim-Kuessner \cite{KK15} that the bounded cohomology of a CW-complex $X$ relative to an amenable subcomplex $Y$ is isometrically isomorphic to the absolute bounded cohomology of $X$; in particular, in this situation, $H^{\ast}_b(X,Y; \R)$ is infinite-dimensional whenever $X$ fits into the list given in the previous paragraph.
Another relevant result is given by Franceschini \cite{Fra18}, who proves that if $(G, H)$ is a relatively hyperbolic pair, then the comparison map $H^{k}_b(G,H; V)\to H^{k}(G,H; V)$ is surjective for every $k\ge 2$ and any bounded $G$-module $V$.}

\subsection{Contracting element} The purpose of this paper is to generalize the existing results on the second bounded cohomology mentioned above  to the relative version. Note that the examples of groups described above all satisfy the contracting properties outlined below.

Let $X$ be a geodesic metric space and $A$ be a \revise{closed} subset of $X$. For a constant $C\ge 0$, we denote by $N_C(A)$ the \rev{open} $C$-neighborhood of $A$ in $X$. Let $\pi_A: X \rightarrow \revise{2^A, x\mapsto \pi_A(x)=\{y\in A: d(x,y)=d(x,A)\}}$ be the map \revise{given by the closest point projection}. We say that $A$ is \textit{$C$-contracting} for $C \geq 0$ if $\diam(\pi_A(\gamma))\le C$ for any geodesic (segment) $\gamma$ with $ \gamma\cap N_C(A)=\varnothing$. In fact, this notion of contracting is equivalent to the usual one: $\diam(\pi_A(B)) \leq C'$ for any metric ball $B$ disjoint from $A$; \rev{see \cite[Corollary 3.4]{BF09} for a proof}. For an isometric group action of a group $G$ on $X$, an element $g\in G$ is called \textit{contracting}, if some (or equivalently, any) orbit of $\langle g\rangle$ is a contracting quasi-geodesic.

The prototype of a contracting element is a loxodromic isometry on a Gromov hyperbolic space, but many more examples are known to be contracting:
\begin{itemize}
    \item rank-1 elements in CAT(0) groups acting on a CAT(0) space, cf. \cite{Bal12,BF09};
    \item hyperbolic elements in groups with non-trivial Floyd boundary (e.g., relatively hyperbolic groups) acting on their Cayley graph with respect to a generating set, cf. \cite{Dru05,Osi04,Yan14};
    \item certain infinite-order elements in graphical small cancellation groups acting on their Cayley graph with respect to a generating set, cf. \cite{ACGH16};
    \item pseudo-Anosov elements in mapping class groups acting on the Teichm\"uller space  equipped with Teichm\"uller metric, or on the curve complex, cf. \cite{Far12,Prz17,Thu79}.
\end{itemize}

In this paper, a group is called \textit{non-elementary} if it is not virtually cyclic. Given an isometric group action on a metric space, a subgroup is called \textit{Morse} if some (or equivalently, any) orbit of this subgroup is weakly quasi-convex (cf. Definition \ref{Def: MorseSubgp}). Now, we state our main result.
\begin{theorem} \label{MainThm}
Let $G$ be a non-elementary countable group acting properly on a geodesic metric space with contracting elements. Consider    a finite collection of Morse subgroups $H_1,\cdots, H_n$ with infinite index. Then there is an injective $\mathbb{R}$-linear map $\omega:\ell^1 \rightarrow H^2_b(G;\mathbb R)$ such that, each coclass in the image $\omega(\ell^1)$ \rev{has a representative vanishing} on $H_i$ for each $i$  $(1\leq i\leq n)$.

Moreover, the dimension of $H^{2}_b(G, \{H_i\}_{i=1}^n; \mathbb{R})$ as a vector space over $\mathbb{R}$ has the cardinality of the continuum.
\end{theorem}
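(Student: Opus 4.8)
The plan is to construct a large family of quasimorphisms on $G$ that simultaneously vanish on all the $H_i$, and then to push these into relative second bounded cohomology. The starting point is the classical strategy of Epstein–Fujiwara–Brooks: pick a contracting element $g\in G$ (which exists by hypothesis), and for a suitably chosen power $h=g^k$ consider the orbit quasi-axis of $\langle h\rangle$, which is a contracting quasi-geodesic. One then builds Brooks-type counting quasimorphisms $\phi_w$ associated to long finite subwords $w$ of the periodic ``word'' traced by the axis, counting (signed) copies of $w$ versus $w^{-1}$ along a quasi-geodesic from $1$ to $gx_0$. Contracting geometry is exactly what makes these counting functions coarsely additive, hence quasimorphisms, and it is what lets one separate infinitely many of them so that the associated map $\ell^1\to \mathrm{QM}(G)/H^1(G;\R)$ is injective. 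This is the content of the first half of Theorem \ref{MainThm}, and I would expect the relative refinement to be extracted from its proof, so I will treat it as available.

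Next I would address the vanishing condition. Each $H_i$ is Morse, so an $H_i$-orbit is weakly quasi-convex; combined with properness of the action this forces an $H_i$-orbit to have bounded ``overlap'' with any single translate of the contracting axis of $h$ — more precisely, any long subword $w$ of the axis can appear only boundedly often inside a geodesic word representing an element of $H_i$, with a bound independent of the element once $|w|$ is large. Therefore, after passing to sufficiently long words $w$, the Brooks quasimorphism $\phi_w$ is bounded on $H_i$; subtracting off a bounded function (or rechoosing the representative cocycle) we may arrange that the homogenization, or at least a cohomologous representative cocycle $\delta\phi_w$, vanishes identically on $H_i\times H_i$ for every $i$ simultaneously. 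This yields the injective map $\omega:\ell^1\to H^2_b(G;\R)$ with representatives vanishing on each $H_i$, which is the stated ``Moreover''-free part.

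For the final statement about $H^2_b(G,\{H_i\}_{i=1}^n;\R)$, I would invoke the long exact sequence of the pair (Gromov): there is an exact sequence
\begin{equation*}
H^1_b(G;\R)\longrightarrow \bigoplus_{i=1}^n H^1_b(H_i;\R)\longrightarrow H^2_b(G,\{H_i\}_{i=1}^n;\R)\longrightarrow H^2_b(G;\R)\stackrel{r}{\longrightarrow}\bigoplus_{i=1}^n H^2_b(H_i;\R),
\end{equation*}
where $r$ is the restriction map. A coclass in $\omega(\ell^1)$ has a representative cocycle that vanishes on every $H_i$, so it lies in $\ker r$, hence lifts to a class in $H^2_b(G,\{H_i\};\R)$; since $H^1_b(H_i;\R)=0$ (bounded $1$-cocycles are bounded homomorphisms, which vanish into $\R$) the connecting map $H^2_b(G,\{H_i\};\R)\to H^2_b(G;\R)$ is injective, so $\ker r$ embeds in $H^2_b(G,\{H_i\};\R)$. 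Thus $\omega(\ell^1)$ pulls back to a continuum-dimensional subspace of the relative group, giving the lower bound on the dimension; the upper bound ($\le$ continuum) is automatic since $G$ is countable, so the space of cochains has cardinality continuum. The main obstacle, I expect, is the middle step: producing the uniform bound on occurrences of a long axis-word inside elements of a Morse subgroup of infinite index, and showing this can be done for all $n$ subgroups and for a whole infinite family of words at once, while still retaining enough words to keep $\omega$ injective. This is where the interplay between Morse-ness, properness, infinite index, and the combinatorics of contracting axes (via the projection complex machinery alluded to in the keywords) must be made quantitative.
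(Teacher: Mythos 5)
Your overall architecture (counting quasimorphisms that vanish on the $H_i$, then the long exact sequence of the pair together with $H^1_b(H_i;\R)=0$ to transfer continuum dimension to the relative group) matches the paper's, and that last step is carried out correctly. But there are two genuine gaps in the middle, and the first is fatal as stated. You claim that Morse-ness plus properness alone force a long subword $w$ of the axis of the chosen contracting element to occur only boundedly often in geodesics representing elements of $H_i$. This is false: take $H_1=\langle g\rangle$ for the very contracting element $g$ whose axis you use; this is a Morse subgroup (contracting sets are Morse) of infinite index since $G$ is non-elementary, yet $[o,g^no]$ contains roughly $n/k$ disjoint copies of the length-$k$ subword, so $\phi_w$ is unbounded on $H_1$. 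The infinite-index hypothesis must enter in the \emph{choice} of the contracting element, not merely in the conclusion. The paper does this via B.~H.~Neumann's lemma (a group is not a finite union of cosets of infinite-index subgroups) to find $g_0\notin S(\cup_i H_i)S$ for a finite set $S$ built from the Morse gauge and the properness of the action; the Extension Lemma then upgrades $g_0$ to a contracting element $g=g_0f$ such that every $h\in H_i$ has uniformly bounded projection to every translate of $\ax(g)$ (Lemmas \ref{Lem: g1g2} and \ref{Lem: UniformShortProjection}). Only after this choice does one get vanishing (not merely boundedness) of the counting functions on the $H_i$.

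The second gap is the source of injectivity of $\omega$. Subwords of a single periodic axis are all equivalent to powers of $g$ with the same orientation ($g^k\sim g^m$ in the sense of Definition \ref{Def: Equivalence}), so the resulting quasimorphisms are all unbounded on the same cyclic subgroup and the standard separation argument (evaluate $\sum_i a_ih_i-b$ on an element where only $h_i$ is nonzero) collapses; it is not clear you obtain more than finitely many independent classes this way. The paper instead passes to a projection complex $\P_K(\F)$, a quasi-tree on which $G$ acts acylindrically (hence WPD) and on which each $H_i$ acts elliptically, and runs the Bestvina--Fujiwara construction there: words $f_i=g_1^{n_i}g_2^{m_i}g_1^{k_i}g_2^{-l_i}$ in a Schottky subgroup generated by two non-equivalent loxodromics, chosen so that $f_i\nsim f_j^{\pm1}$, which is exactly what yields $h_i(f_j^m)=0$ for $j\ne i$ while $h_i(f_i^{r_im})\ge L_1m$. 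Ellipticity of each $H_j$ on the quasi-tree then gives the identical vanishing $h_i|_{H_j}\equiv 0$ essentially for free, since bounded orbits are barrier-free for long words. You would need to import both of these devices to close the argument.
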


Here $\ell^1$ denotes the Banach space of summable sequences of real numbers with the norm $\|(x_i)\|=\sum\limits_{i=1}^{\infty}|x_i|$.

\begin{remark}
    \begin{enumerate}
        \item \revise{If one of the $H_i$'s is of finite index in $G$, then Corollary \ref{FiniteIndex2} below implies that $H^{2}_b(G, \{H_i\}_{i=1}^n; \mathbb{R})$=0. Based on this fact, we only consider the second relative bounded cohomology of $G$ with respect to subgroups with infinite index.} Since any finitely generated subgroup of  a free group is Morse, Theorem \ref{MainThm} generalizes the result of Pagliantini-Rolli \cite{PR15} which states that for a free group $F_n(n\ge 2)$ and a finitely generated subgroup $H\le F_n$, the subgroup $H$ has infinite index in $F_n$ if and only if the dimension of the second relative bounded cohomology $H^2_b(F_n,H;\mathbb R)$ as a vector space over $\mathbb R$ is infinite.
        \item In \cite{FPS15, HO13}, the authors extend a non-trivial quasimorphism on a subgroup to the ambient group. Here we take the opposite approach, by constructing a quasimorphism on the ambient group whose restriction to the subgroup is trivial.
    \end{enumerate}
\end{remark}

\revise{To prove Theorem \ref{MainThm}, we need the following result.

\begin{proposition}[Proposition \ref{Prop: Summary}]\label{IntroProp: QT}
    Let $G$ be a non-elementary countable group acting properly on a geodesic metric space with contracting elements. Consider a finite collection of Morse subgroups $H_1,\cdots, H_n$ with infinite index. Then there is a quasi-tree on which $G$ acts and each $H_i$ acts elliptically on it for $1\le i\le n$.
\end{proposition}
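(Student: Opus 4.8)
The plan is to realize the quasi-tree as a Bestvina--Bromberg--Fujiwara projection complex built from the $G$-orbit of the axis of a single, suitably generic contracting element, and then to show that transversality of that element to the subgroups $H_i$ forces each $H_i$ to have bounded orbits. Throughout, fix a basepoint $o\in X$ and a constant $M$, depending only on the contracting constant of $X$ and on the weak-quasi-convexity constants of $H_1,\dots,H_n$, such that every geodesic joining two points of $H_i\cdot o$ stays in $N_M(H_i\cdot o)$.

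The first and, I expect, hardest step is a transversality lemma: to produce a contracting element $g\in G$ and a constant $L_0$ so that for every $f\in G$ and every $i$ the translated axis $f\,\ax(g)$ meets $N_M(H_i\cdot o)$ in a set of diameter at most $L_0$; in particular $H_i\cap fE(g)f^{-1}$ is then finite for all $f$. The existence of such a $g$ is where both hypotheses enter. Properness of the action limits the axis-translates coming within $M+1$ of $o$ to a fixed finite list $b_1\,\ax(g),\dots,b_N\,\ax(g)$; combined with weak quasi-convexity of $H_i\cdot o$, any overlap of some $f\,\ax(g)$ with $N_M(H_i\cdot o)$ forces $f\,\ax(g)$ into the $H_i$-orbit of one of these, and, since $H_i\cdot(ho)=H_i\cdot o$ for every $h\in H_i$, the overlap of $f\,\ax(g)$ with $N_M(H_i\cdot o)$ is \emph{constant} along that $H_i$-orbit. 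So it suffices that each of the finitely many axes $b_j\,\ax(g)$ — coarsely, the axes of the conjugates $b_jgb_j^{-1}$ — have bounded overlap with each $N_M(H_i\cdot o)$, and this holds exactly when the fixed points of $b_jgb_j^{-1}$ avoid $\Lambda H_i$. Now each $H_i$ being Morse makes $\Lambda H_i$ a closed subset of the relevant boundary of $X$, and each $H_i$ having infinite index makes it proper; a contracting element whose pair of fixed points avoids the finite union $\bigcup_{j,i}b_j^{-1}\Lambda H_i$ then exists by the standard genericity properties of contracting elements (which use that $G$ is non-elementary).

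Granting such a $g$, the remaining steps are essentially a black-box application of known technology. Replacing $g$ by a large power $g^k$, the $G$-invariant collection $\mathbb{Y}$ of axis-translates associated to the cosets of $\langle g^k\rangle$, equipped with nearest-point projections, satisfies the projection axioms of Bestvina, Bromberg and Fujiwara, and the associated quasi-tree of metric spaces $T:=\mathcal{C}_K(\mathbb{Y})$ is a quasi-tree carrying an isometric $G$-action, with $g^k$ loxodromic. To see that each $H_i$ acts elliptically, fix the piece $Y_0\in\mathbb{Y}$ through $o$; by the distance estimate in a quasi-tree of metric spaces it suffices to bound $d_T(o,ho)$ uniformly in $h\in H_i$. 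Choosing $k$ large enough that the defining threshold $K$ of the projection complex exceeds $L_0$ plus the ambient contracting constants, the point is that $[o,ho]\subseteq N_M(H_i\cdot o)$ by weak quasi-convexity, so Step 1 shows $[o,ho]$ fellow-travels no domain — neither $Y_0$, nor $hY_0$, nor any intermediate $W\in\mathbb{Y}$ — along a subsegment longer than $O(L_0)$; hence $d_W(Y_0,hY_0)\le K$ for every $W$, and the contributions of $Y_0$ and $hY_0$ themselves to $d_T(o,ho)$ are $O(L_0)$ as well. Therefore $H_i\cdot o$ has bounded diameter in $T$, and assembling the three steps produces the desired quasi-tree with its isometric $G$-action on which $H_1,\dots,H_n$ all act elliptically. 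The main obstacle is the transversality lemma of Step 1 — turning ``Morse of infinite index'' into the uniform bound on axis-overlaps, with the conjugacy bookkeeping it requires — after which the fellow-traveling estimate just sketched closes out the argument.
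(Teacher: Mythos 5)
Your overall architecture coincides with the paper's: find a single contracting element $g$ whose axis-translates have uniformly bounded interaction with every $H_i$-orbit, build the Bestvina--Bromberg--Fujiwara complex on $\F=\{f\ax(g):f\in G\}$, and deduce ellipticity of each $H_i$ from the bounded-projection property via the fellow-travelling of standard/admissible paths. Your closing step is essentially the paper's Lemma \ref{Lem: EllipticAction} (whether one uses $\mathcal P_K(\F)$ or its blow-up $\C(\F)$ is immaterial), and the reduction of uniformity to the finitely many translates $b_j\ax(g)$ passing near $o$ is a correct use of properness.

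The genuine gap is in your transversality lemma. First, the finite list $b_1\ax(g),\dots,b_N\ax(g)$ is determined by $g$, yet you then ask for a contracting element $g$ whose fixed points avoid $\bigcup_{j,i}b_j^{-1}\Lambda H_i$: the target set you must avoid moves with the element you are choosing, so ``standard genericity'' cannot be invoked as stated. If you drop the reduction to the $b_j$'s, the condition becomes that $g^{\pm\infty}$ avoids the \emph{countable} union $\bigcup_{f\in G}\Lambda(f^{-1}H_if)$, together with a \emph{uniform} bound on the overlaps, and neither is supplied by ``$\Lambda H_i$ closed and proper'' alone; indeed, for general subgroups with proper limit sets this is exactly the paper's open Question \ref{IntroQue}-adjacent problem (Question 5.4), so some use of the Morse hypothesis beyond closedness of $\Lambda H_i$ is unavoidable. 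The paper circumvents all boundary theory here: by B.~H.~Neumann's lemma (Lemma \ref{Lem: FiniteUnion}), infinite index forces $G\setminus S(\cup_iH_i)S$ to be nonempty for the finite set $S=\{s:d(o,so)\le\epsilon+M\}$, any $g_0$ in that complement makes every $h\in H_i$ $(\epsilon,g_0)$-barrier-free (Lemma \ref{Lem: g1g2}, using Morseness only to push $[o,ho]$ into $N_M(H_io)$), and the Extension Lemma plus the Han--Yang--Zou barrier criterion (Lemma \ref{Lem: UniformEpsilon} and Remark \ref{Rmk: UniformEpsilon}) convert barrier-freeness into the uniform projection bound $\diam(\pi_{b\ax(g)}([o,ho]))\le\tau$ for all $b\in G$. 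You would need either to import that mechanism or to supply a genuinely non-circular boundary argument; as written, Step 1 does not close. (A minor further point: what the elliptic-action step consumes is a bound on $\diam(\pi_{f\ax(g)}([o,ho]))$ rather than on $\diam(f\ax(g)\cap N_M(H_io))$; for contracting axes the former follows from the latter, but this deduction should be recorded.)
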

}

An interesting corollary of Theorem \ref{MainThm} is as follows. This generalizes a result of Kotschick \cite[Corollary 11]{Kot04}. See Definition \ref{Def: BoundedGeneration} for the definition of bounded generation.
\begin{corollary}[Corollary \ref{MainCor}]
Under the assumption of Theorem \ref{MainThm},  $G$ is not boundedly generated by $\{H_i: 1\le i\le n\}$.
\end{corollary}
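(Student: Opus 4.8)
The plan is to deduce the corollary from Proposition~\ref{IntroProp: QT} via the classical obstruction to bounded generation: a group acting isometrically on a metric space with unbounded orbits cannot be boundedly generated by a finite family of subgroups each of which has bounded orbits. I would argue by contradiction. First, unwinding Definition~\ref{Def: BoundedGeneration}, bounded generation of $G$ by $\{H_i\}_{i=1}^n$ means there is an integer $M\ge 1$ such that every $g\in G$ can be written as a product $g=h_1h_2\cdots h_m$ with $m\le M$ and each $h_j$ lying in some $H_{i(j)}$.

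Next I would feed in Proposition~\ref{IntroProp: QT}: it provides an action of $G$ on a quasi-tree $T$ on which every $H_i$ acts elliptically. From the construction underlying that proposition one also knows that a contracting element $t\in G$ acts loxodromically on $T$, so in particular the $G$-orbits in $T$ are unbounded. Fix a basepoint $x_0\in T$. Ellipticity of the $H_i$ together with finiteness of the collection yields a uniform bound $R:=\max_{1\le i\le n}\diam(H_i x_0)<\infty$. For $g=h_1\cdots h_m$ as above, a telescoping use of the triangle inequality, together with the fact that each partial product $h_1\cdots h_{j-1}$ acts by an isometry, gives
\begin{equation*}
\dist(x_0,gx_0)\ \le\ \sum_{j=1}^m\dist\big(h_1\cdots h_{j-1}x_0,\ h_1\cdots h_jx_0\big)\ =\ \sum_{j=1}^m\dist(x_0,h_jx_0)\ \le\ mR\ \le\ MR .
\end{equation*}
Thus the whole $G$-orbit of $x_0$ has diameter at most $MR$, contradicting that $k\mapsto\dist(x_0,t^kx_0)$ is unbounded; this proves the corollary.

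This is precisely Kotschick's mechanism in \cite{Kot04}, rephrased for isometric actions: equivalently, the quasimorphisms produced in the proof of Theorem~\ref{MainThm} may be taken to be homogeneous Fujiwara-type quasimorphisms attached to the $G$-action on $T$, each of which vanishes identically on every $H_i$ precisely because $H_i$ has bounded orbit in $T$; bounded generation together with the defect inequality would then force such a quasimorphism $\phi$ to satisfy $|\phi(g)|\le (M-1)\,D(\phi)$ for all $g\in G$, so $\phi$ would be a bounded homogeneous quasimorphism and hence $\phi\equiv 0$, contradicting its nontriviality. I expect the only point requiring care to be the two facts borrowed from the construction rather than from the bare statement of Theorem~\ref{MainThm}: that the $G$-action on $T$ is non-elementary (has unbounded orbits), and that the relevant quasimorphism genuinely \emph{vanishes} on each $H_i$ rather than merely restricting there to a homomorphism. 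Both are immediate from the quasi-tree of Proposition~\ref{IntroProp: QT} and the explicit form of the quasimorphisms, but since the statement of Theorem~\ref{MainThm} only guarantees a cohomology class with a representative vanishing on the $H_i$, it is cleanest to run the argument through Proposition~\ref{IntroProp: QT} directly, as above.
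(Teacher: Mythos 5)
There is a genuine gap, and it stems from a misreading of Definition~\ref{Def: BoundedGeneration}. In this paper, $G$ is boundedly generated by $\{H_i\}$ if for \emph{each} $g\in G$ there is an $N=N(g)$ such that every power $g^n$ is a product of $N$ elements each of which is a \emph{conjugate} of an element of $\bigcup_j H_j$; it is not the statement that every element of $G$ is a product of a uniformly bounded number of elements lying in the $H_i$ themselves. Your primary argument -- bounding $\diam(Gx_0)$ by $MR$ with $R=\max_i\diam(H_ix_0)$ -- breaks on both points. Most seriously, the factors are conjugates $g_jh_jg_j^{-1}$, and while $g_jH_jg_j^{-1}$ still acts elliptically on the quasi-tree $T$, its orbit of the \emph{fixed} basepoint $x_0$ has diameter $\diam(H_j\,g_j^{-1}x_0)$, which is not controlled by $R$ (an elliptic isometry can displace points far from its fixed set by an arbitrarily large amount). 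So the telescoping estimate $\dist(x_0,g_jh_jg_j^{-1}x_0)\le R$ is false, and the orbit-diameter obstruction only rules out the stronger, conjugation-free notion of bounded generation -- a strictly weaker conclusion than Corollary~\ref{MainCor}.

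Your second paragraph is essentially the paper's actual proof, but as written it still does not close the gap: you assert $|\phi(g)|\le (M-1)D(\phi)$ for all $g\in G$ with a uniform $M$, which the definition does not supply, and you never address the conjugates. The paper's argument handles both issues at once: take the homogenization $\bar\phi$ of a quasimorphism from Proposition~\ref{Prop: InfiniteQM} that vanishes on each $H_i$ and is unbounded on some $\langle g\rangle$. Since homogeneous quasimorphisms are class functions, $\bar\phi$ vanishes on every conjugate $g_jh_jg_j^{-1}$ as well; then writing $g^m=h_1'\cdots h_N'$ and iterating the defect inequality gives $m|\bar\phi(g)|=|\bar\phi(g^m)|\le N\Delta(\bar\phi)$ for all $m$, with $N=N(g)$ fixed, which is the desired contradiction. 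So the conjugation-invariance of homogeneous quasimorphisms and the application of homogeneity to the powers $g^m$ (rather than to all of $G$) are the two missing ingredients; once you add them, your quasimorphism route coincides with the paper's proof, and Proposition~\ref{IntroProp: QT} alone (without the quasimorphisms) is not enough.
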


When a subgroup is a Morse subgroup of infinite index, its limit set is always a proper subset of the limit set of the ambient group, which provides evidence for the existence of enough relative quasimorphisms. (See \cite{HYZ23, Yan22b} for more details about convergence boundary.) Thus, we can ask the following question.

\begin{question}
    Let $G$ be a non-elementary countable  group acting properly on a geodesic metric space $X$ with convergence boundary. Let $\{H_1,\ldots, H_n\}$ be a finite collection of subgroups with proper limit sets. Then is the dimension of $H^{2}_b(G, \{H_i\}_{i=1}^n; \mathbb{R})$ as a vector space over $\mathbb{R}$  infinite?
\end{question}

Another natural question is that when the subgroup is taken to be a normal subgroup of infinite index, does the conclusion of Theorem \ref{MainThm} still hold? If the normal subgroup has an amenable quotient, then Proposition \ref{Prop: AmenableQuotient} gives a negative answer. If the normal subgroup is normally generated by a higher power of a contracting element, then we have the following result.
\begin{proposition}[Proposition \ref{Prop: NormalClosure}]\label{IntroProp}
    Let $G$ be a non-elementary countable group acting properly on a geodesic metric space with contracting elements. Then for any contracting element $g\in G$, there exists $k=k(g)>0$ such that the dimension of $H^{2}_b(G, \langle\langle g^k\rangle\rangle; \mathbb{R})$ as a vector space over $\mathbb{R}$ has the cardinality of the continuum.
\end{proposition}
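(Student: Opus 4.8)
The plan is to reduce Proposition~\ref{IntroProp} to Theorem~\ref{MainThm} by showing that, for a suitable power $k$, the normal closure $N := \langle\langle g^k\rangle\rangle$ is a Morse subgroup of infinite index in $G$; then Theorem~\ref{MainThm} (applied to the single subgroup $H_1 = N$) immediately yields that $H^2_b(G, N; \mathbb{R})$ has the cardinality of the continuum. So the entire content lies in verifying the two hypotheses ``Morse'' and ``infinite index'' for $N = \langle\langle g^k\rangle\rangle$ once $k$ is chosen large.

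**First I would** invoke the standard small-cancellation / projection-complex technology available for groups with contracting elements (this is precisely the machinery underlying Proposition~\ref{IntroProp: QT} and the results cited in the introduction, e.g. the Bestvina--Bromberg--Fujiwara projection complex and Dahmani--Guirardel--Osin-type rotating-families arguments). The key input is: for a contracting element $g$, there is $k = k(g) > 0$ such that the family of conjugates $\{\gamma \langle g^k\rangle \gamma^{-1}\}$ satisfies a sufficiently strong small-cancellation condition, so that $N = \langle\langle g^k\rangle\rangle$ is a \emph{free product} of conjugates of $\langle g^k\rangle$ and, crucially, the quotient $G/N$ is again a non-elementary group (indeed it still acts properly on a space with contracting elements, or at the very least is infinite). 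The infiniteness of $G/N$ is exactly the statement that $N$ has infinite index; this is the part where I would cite the SQ-universality / Dehn-filling results for contracting elements (the quotient $G/\langle\langle g^k\rangle\rangle$ is non-trivial and infinite for $k$ large, by the analogue of the group-theoretic Dehn filling theorem).

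**Next I would** establish that $N$ is Morse, i.e.\ that some orbit $N \cdot o$ is weakly quasi-convex (Definition~\ref{Def: MorseSubgp}). Here the natural argument is geometric: because $N$ is generated by conjugates of the contracting element $g^k$ under a small-cancellation condition, an orbit of $N$ is quasi-isometrically embedded and its geodesics stay uniformly close to concatenations of translates of the axis of $g^k$, which are themselves contracting; a Morse-type local-to-global principle for contracting quasi-geodesics (the kind used throughout the contracting-element literature, e.g.\ the ``contraction implies Morse'' results) then promotes this to weak quasi-convexity of the orbit. Alternatively — and this may be cleaner — one observes that $N$, being the fundamental group of the quotient action / acting on the associated quasi-tree or Bass--Serre-type tree with the conjugates of $\langle g^k\rangle$ as vertex stabilizers, is itself ``strongly contracting'' in $G$ in the sense needed; one should be able to extract this directly from the projection-complex construction already used to prove Proposition~\ref{IntroProp: QT}.

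**The hard part will be** pinning down the choice of $k$ so that \emph{both} properties hold simultaneously and uniformly: one needs $k$ large enough that (a) the small-cancellation condition on $\{\gamma\langle g^k\rangle\gamma^{-1}\}$ holds (giving the free-product structure and the Morse/contracting property of $N$) and (b) the Dehn-filling-type quotient $G/N$ is infinite (giving infinite index). Both (a) and (b) are ``for all sufficiently large $k$'' statements in the contracting-element framework, so they are compatible, but making this rigorous requires carefully citing or reproving the relevant proper-action-descends and Dehn-filling statements in the generality of ``proper action on a geodesic space with contracting elements'' rather than, say, acylindrical hyperbolicity — this compatibility/generality check is the main technical obstacle. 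Once $k$ is fixed with (a) and (b) in hand, the conclusion is a one-line application of Theorem~\ref{MainThm}.
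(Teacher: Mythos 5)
Your reduction to Theorem \ref{MainThm} fails at the crucial step: the normal closure $N=\llangle g^k\rrangle$ is \emph{never} a Morse subgroup of $G$ (once it is infinite and of infinite index), so the hypothesis of Theorem \ref{MainThm} cannot be verified for it. The obstruction is exactly the normality you are otherwise exploiting. Since $N$ is normal and contains the infinite-order element $g^k$, its orbit $N\cdot o$ coarsely meets every translate $\gamma\ax(g)$, $\gamma\in G$; equivalently, the limit set of an infinite normal subgroup is the full limit set of $G$, whereas a Morse subgroup of infinite index always has a proper limit set (a point the paper itself makes when motivating its Question on convergence boundaries). A concrete counterexample already occurs in the paper's model case $G=F_2=\langle a,b\rangle$, $g=a$: the subgroup $N=\llangle a^k\rrangle$ contains $1$ and $b^na^kb^{-n}$ for every $n$, the geodesic between these two points passes through $b^n$, and $d(b^n,N)=n\to\infty$ (the image of any $w$ with $b^nw\in N$ in $\Z/k\ast\Z$ has length at most $|w|$, forcing $|w|\ge n$). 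So $N\cdot o$ is not weakly quasi-convex, and neither the ``free product of conjugates'' structure from the rotating-family/Dehn-filling theory nor any local-to-global principle can rescue this: those give information about $N$ as an abstract group, not about the quasi-convexity of its orbit, and the conjugate axes $\gamma\ax(g^k)$ are spread over all of $X$.

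The paper's proof uses normality in the opposite way: by Proposition \ref{Prop: NormalSubgp}, $H^2_b(G,\llangle g^k\rrangle;\R)\cong H^2_b(G/\llangle g^k\rrangle;\R)$, so it suffices to make the \emph{quotient} act WPD on a hyperbolic space. This is done by building the quasi-tree of spaces $\C(\F)$, coning it off to get a very rotating family $(A(\F),\{f\langle g^{k}\rangle f^{-1}\})$, and invoking Dahmani--Guirardel--Osin to conclude that $G/\llangle g^k\rrangle$ acts acylindrically (hence WPD) on the hyperbolic quotient of the cone-off; Proposition \ref{Prop: WPDQuotient} then finishes. Your instinct to use the rotating-family machinery is right, but it must be applied to hyperbolize the quotient rather than to establish a Morse property for $N$, which is false.
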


\rev{
\begin{remark}
    Proposition \ref{IntroProp} is already known for non-elementary hyperbolic groups. In \cite{Del96}, Delzant showed that for any hyperbolic element $g$ in a non-elementary hyperbolic group $G$, there exists $k\in \N$ such that $G/\llangle g^k\rrangle$ is still hyperbolic. Together with \cite[Theorem 1.1]{EF97} and Proposition \ref{Prop: NormalSubgp} below, one gets the result.
\end{remark}
}
In general, we raise the following question.

\begin{question}\label{IntroQue}
     Let $G$ be a non-elementary countable  group acting properly on a geodesic metric space $X$ with contracting elements. Let $H$ be a normal subgroup of $G$ with non-amenable quotient. Is the dimension of $H^{2}_b(G, H; \mathbb{R})$ as a vector space over $\mathbb{R}$  infinite?
\end{question}

\subsection{Sketch of proof}

There are two main ingredients in the proof of Theorem \ref{MainThm}. The first is \revise{Proposition \ref{IntroProp: QT}, namely} the construction of an appropriate projection complex on which each $H_i$ acts elliptically. This notion is introduced by Bestvina-Bromberg-Fujiwara in \cite{BBF15}. In Section \ref{Sec: MorseSubgpofInfIndex}, we are going to make use of the Morse property of $\{H_i\}$ (cf. Lemma \ref{Lem: UniformShortProjection}) to show that there exists a contracting element $g$ such that every $h\in H_i$ has a uniformly bounded projection to the geodesic segment $[o,go]$. \revise{To achieve this goal, we use some techniques developed by Han-Yang-Zou \cite{HYZ23}.} Then we construct the projection complex $\mathcal P_K(\mathcal F)$ whose vertices are the $G$-translates of the axis $\ax(g)$ of $g$ and show that each $H_i$ acts elliptically on $\mathcal P_K(\mathcal F)$ (cf. Lemma \ref{Lem: EllipticAction}). \revise{The projection complex is shown to be a quasi-tree on which $G$ acts acylindrically (cf. Section \ref{Sec: MainThm}). }

The second ingredient is Proposition \ref{Prop: KeyProp}\revise{, which states that if $G$ acts WPD (a weaker notion than acylindrical action) on a $\delta$-hyperbolic space $X$ and each $H_i$ acts elliptically on $X$, then the dimension of $H^2_b(G, \{H_i\}_{i=1}^n; \mathbb R)$ as a vector space over $\mathbb R$ has the cardinality of the continuum}. To prove this proposition, we need a result of Bestvina-Fujiwara \cite[Proposition 2]{BF02} which constructs an infinite collection of words $\{f_i\}$ in a rank two free subgroup of $G$. As in \cite{Fuj98}, we can produce a corresponding collection of quasimorphisms $\{h_{i}\}$ on $G$, which satisfy some special properties stated in Proposition \ref{Prop: InfiniteQM}. Then Proposition \ref{Prop: KeyProp} follows from Proposition \ref{Prop: InfiniteQM}. The bridge connecting two ingredients is a result of Bestvina-Bromberg-Fujiwara-Sisto \cite[Theorem 5.6]{BBFS19}. As a result, we get Theorem \ref{MainThm}.

As for Proposition \ref{IntroProp}, \revise{where $H$ is assumed to be a normal subgroup generated by a higher power of a contracting element}, we first construct a quasi-tree of spaces $\C(\F)$ for every contracting element which is a blow-up of the projection complex. The space $\C(\F)$ turns out to be a quasi-tree on which $G$ acts acylindrically (cf. Lemma \ref{Lem: AcyActionOnC(F)}). Later, we construct a hyperbolic cone-off over a scaled $\C(\F)$ along $\F$ and obtain a very rotating family (cf. Lemma \ref{Lem: RotFam}). With the help of the theory of rotating families developed by Dahmani-Guirardel-Osin \cite{DGO17}, we can reduce the proof of Proposition \ref{IntroProp} to the proof of Proposition \ref{Prop: WPDQuotient}, which is completed at the end of Section \ref{Sec: ConstructingQM}.


\paragraph{\textbf{Structure of the paper}}
The paper is organized as follows. Section \ref{Sec: Preliminaries} is devoted to recalling some preliminary materials about Gromov-hyperbolic spaces, contracting subsets, projection complexes, (relative) bounded cohomology, and quasimorphisms. In Section \ref{Sec: MorseSubgpofInfIndex}, we \revise{proves Proposition \ref{IntroProp: QT}. Specifically, we} make use of the recent work of Han-Yang-Zou \cite{HYZ23} to construct a projection complex and show that each $H_i$ acts elliptically on it. In Section \ref{Sec: ConstructingQM}, we review the previous work on Epstein-Fujiwara quasimorphisms in \cite{Fuj98} and prove Proposition \ref{Prop: KeyProp}. Then we complete the proof of Theorem \ref{MainThm} in Section \ref{Sec: MainThm}. In Section \ref{sec: RotationFamily}, we first recall some facts about hyperbolic cone-offs and rotation families, and then we prove Proposition \ref{IntroProp} from there. 

\subsection*{Acknowledgments}
We are grateful to Prof. Wenyuan Yang and Prof. Shi Wang for many helpful suggestions on the first draft. Many thanks to Binxue Tao for pointing out Proposition \ref{IntroProp} for us. \revise{We also thank the anonymous referees for their numerous and useful comments.} R. W. is supported by NSFC No.12471065 \& 12326601 and in part by Science and Technology Commission of Shanghai Municipality (No. 22DZ2229014). \revise{Both authors are supported by  National Key R \& D Program of China (SQ2020YFA070059) and NSFC (No. 12131009).}

\section{Preliminaries}\label{Sec: Preliminaries}

We first introduce some fundamental notations and definitions that will be used throughout this paper.

\subsection{\revise{Gromov-hyperbolic spaces}}\label{Subsec: GromovHypSpace}
\revise{We only introduce some necessary knowledge about Gromov-hyperbolic spaces here. For a more detailed introduction, we refer the readers to \cite[Part III. H]{BH99} or \cite[Chapter 11]{DK18}.} Let $(X, d)$ be a geodesic metric space. For $S\subset X$ and $r>0$, we denote by  $N_r(S)$ the \revise{open} $r$-neighborhood of $S$. \revise{For two subsets $S,T\subset X$, we denoted by $d_H(S,T)$ the \textit{Hausdorff distance} between $A$ and $B$, which is defined by $d_H(S,T):= \inf\{r>0: S\subset N_r(T), T\subset N_r(S)\}$.}

For any two points $x,y\in X$, denote by $[x,y]$ a choice of a geodesic segment between $x$ and $y$. A \textit{geodesic triangle} in $X$ consists of three points $x, y, z\in X$ and three geodesic segments $[x, y], [y, z]$, and $ [z, x]$.

A geodesic metric space $(X,d)$ is called \textit{(Gromov) $\delta$-hyperbolic} for a constant $\delta \geq 0$ if every geodesic triangle in $X$ is \textit{$\delta$-thin}: each of its sides is contained in the $\delta$-neighborhood of the union of the other two sides. \revise{See Figure \ref{Fig: ThinTri} for an illustration.}

\begin{figure}[ht]
  \centering
  \includegraphics[width=10cm]{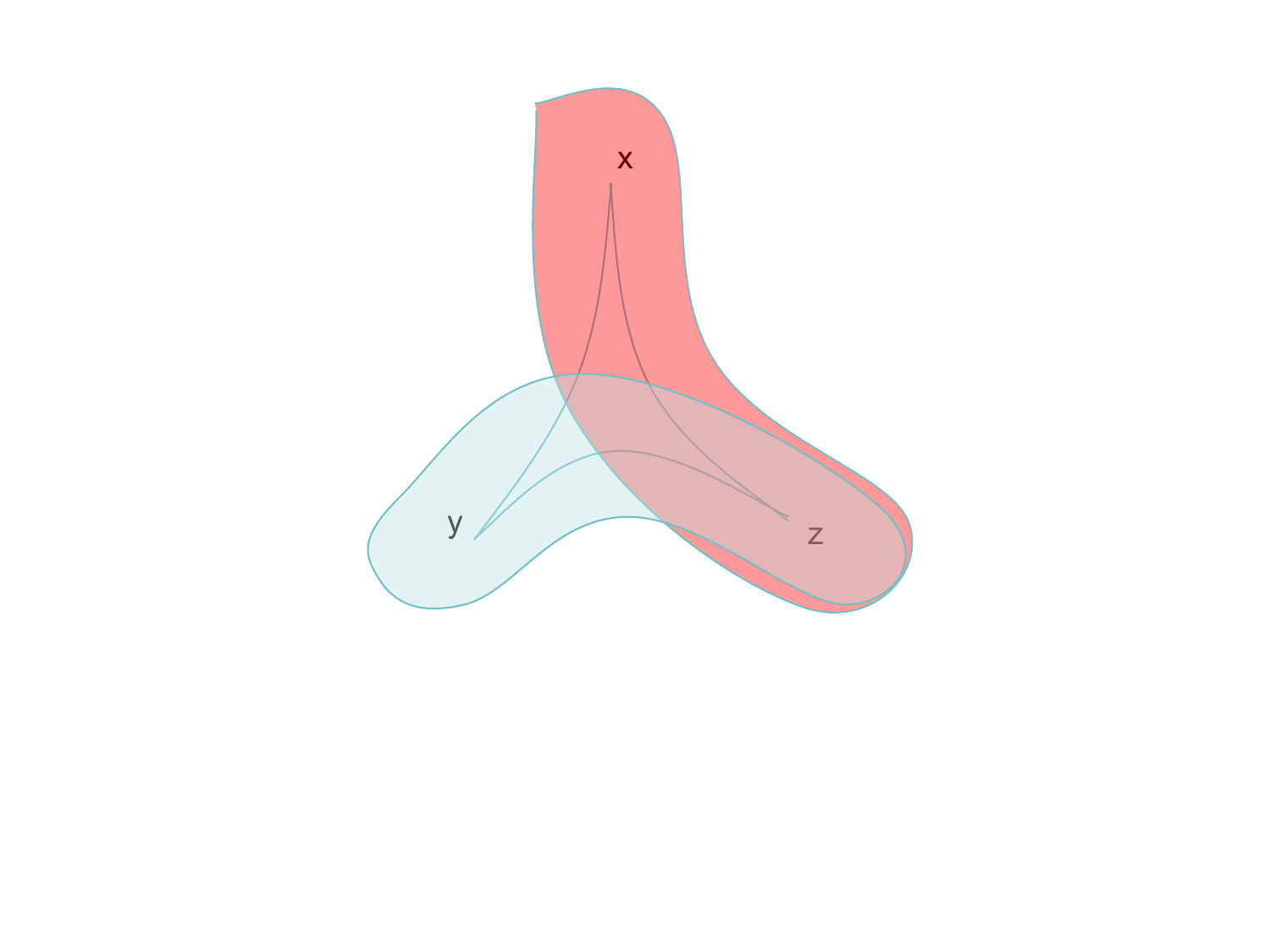}\\
  \caption{$[x,y]\subset N_{\delta}([x,z]\cup [y,z])$}\label{Fig: ThinTri}
\end{figure}

A finitely generated group is called \revise{\textit{Gromov-hyperbolic}}, if its Cayley graph with respect to some finite generating set is a $\delta$-hyperbolic metric space for some $\delta\geq 0$.  

Let $(X_1, d_1)$ and $(X_2, d_2)$ be two metric spaces. A (not necessarily continuous) map $f: X_1\rightarrow X_2$ is called a \textit{$(\lambda, \epsilon)$-quasi-isometric embedding} if there exist constants $\lambda\geq 1$ and $\epsilon\geq 0$ such that for all $x, y \in X_1$ we have
$$
\frac{1}{\lambda}d_1(x, y)-\epsilon \leq d_2(f(x), f(y)) \leq\lambda d_1(x, y)+\epsilon.
$$                                                                                               If, in addition, every point of $X_2$ lies in the $\epsilon$-neighborhood of the image of $f$, then $f$ is called a \textit{$(\lambda, \epsilon)$-quasi-isometry}. When such a map exists, the two spaces $X_1$ and $X_2$ are said to be \textit{quasi-isometric.}

A \textit{$(\lambda, \epsilon)$-quasi-geodesic} in a metric space $X$ is the image of a
 $(\lambda, \epsilon)$-quasi-isometric embedding $c: I \rightarrow X$, where $I$ is an interval (possibly \revise{bounded or unbounded}). For simplicity, a $(\lambda,\lambda)$-quasi-geodesic will be referred to as a \textit{$\lambda$-quasi-geodesic}.

 \revise{The following result is the well-known Morse Lemma in Geometric Group Theory. We refer the readers to \cite[Chapter III.H, Theorem 1.7]{BH99} or \cite[Theorem 11.72]{DK18} for a proof.}

\begin{lemma}[Morse Lemma]\label{Lem: MorseLemma}
    \revise{For all $\delta\ge 0, \lambda\ge 1, \epsilon\ge 0$, there exists a constant $L=L(\lambda,\epsilon,\delta)$ such that any two $(\lambda, \epsilon)$-quasi-geodesics in a $\delta$-hyperbolic space with the same endpoints are contained in an $L$-neighborhood of each other}.
\end{lemma}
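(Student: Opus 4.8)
The plan is to run the classical stability-of-quasi-geodesics argument in the form given in \cite[III.H.1.7]{BH99}, in four moves.

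\textbf{Reduction and taming.} Given two $(\lambda,\epsilon)$-quasi-geodesics $c_1,c_2$ with common endpoints $p,q$, I would fix a geodesic $\sigma=[p,q]$ and reduce to showing $d_H(\mathrm{im}(c_i),\sigma)\le L_0$ for some $L_0=L_0(\lambda,\epsilon,\delta)$, since then $d_H(\mathrm{im}(c_1),\mathrm{im}(c_2))\le 2L_0=:L$ by the triangle inequality for Hausdorff distance. Next I would tame a single quasi-geodesic $c\colon[a,b]\to X$: replace it by the continuous path $\hat c$ that agrees with $c$ at $a$, $b$ and the integers of $[a,b]$ and is geodesic in between. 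A routine computation shows $\hat c$ is again a quasi-geodesic with the same endpoints and with constants depending only on $\lambda,\epsilon$, that $d_H(\mathrm{im}(c),\mathrm{im}(\hat c))\le\lambda+\epsilon$, and — the point of the construction — that $\hat c$ is rectifiable with the length control $\ell(\hat c|_{[s,t]})\le k_1\, d(\hat c(s),\hat c(t))+k_2$ for constants $k_1,k_2=k_1,k_2(\lambda,\epsilon)$. So it remains to bound $d_H(\mathrm{im}(c),\sigma)$ for a continuous rectifiable $c$ obeying such a length bound.

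\textbf{The divergence estimate.} The only genuinely geometric input I need is: if $\alpha$ is a rectifiable path from $x$ to $y$ in $X$ and $m\in[x,y]$, then $d(m,\mathrm{im}(\alpha))\le\delta\log_2\ell(\alpha)+C_0(\delta)$; equivalently, a path from $x$ to $y$ avoiding $N_R(m)$ has length at least $2^{(R-C_0)/\delta}$. This I would prove by dyadic subdivision: parametrise $\alpha$ on $[0,1]$ proportionally to arc length, apply $\delta$-thinness of the triangle on $x,y,\alpha(1/2)$ to push $m$ within $\delta$ of one of the two shorter sides, and iterate; after $n$ steps one reaches a point of $\mathrm{im}(\alpha)$ within $n\delta+2^{-n}\ell(\alpha)$ of $m$, and optimising over $n$ gives the estimate.

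\textbf{The two inclusions.} For $\sigma\subseteq N_D(\mathrm{im}(c))$: let $x_0\in\sigma$ maximise $D_0:=d(x_0,\mathrm{im}(c))$, and let $u,v\in\sigma$ lie at distance $2D_0$ from $x_0$ on the two sides of it (truncating at $p,q$ if necessary, which only improves the bound). Choose $s,t$ with $d(u,c(s)),d(v,c(t))\le D_0$ and let $\beta$ be the concatenation of $[u,c(s)]$, the sub-arc of $c$ joining $c(s)$ and $c(t)$, and $[c(t),v]$. Because $\mathrm{im}(c)$ is everywhere at distance $\ge D_0$ from $x_0$ while the two geodesic arcs stay within $D_0$ of $u$, resp.\ $v$, the path $\beta$ avoids $N_{D_0}(x_0)$ although $x_0\in[u,v]$; on the other hand $\ell(\beta)\le 2D_0+k_1\, d(c(s),c(t))+k_2\le(6k_1+2)D_0+k_2$. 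The divergence estimate forces $2^{(D_0-C_0)/\delta}\le(6k_1+2)D_0+k_2$, so $D_0\le D=D(\lambda,\epsilon,\delta)$. For the reverse inclusion I would fix $t$, let $t_1\le t\le t_2$ be the largest, resp.\ smallest, parameters with $c(t_i)\in N_D(\sigma)$, and pick $x_i\in\sigma$ with $d(c(t_i),x_i)\le D$; then $c((t_1,t_2))$ misses $N_D(\sigma)$. By the inclusion just proved, each point of $[x_1,x_2]\subseteq\sigma$ is within $D$ of some $c(r)$, necessarily with $r\notin(t_1,t_2)$; the subsets of $[x_1,x_2]$ realised with $r\le t_1$, resp.\ $r\ge t_2$, are closed, cover $[x_1,x_2]$, and contain $x_1$, resp.\ $x_2$, hence meet, producing $r\le t_1\le t_2\le r'$ with $d(c(r),c(r'))\le 2D$. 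Then $d(c(t),c(t_1))\le\ell(c|_{[r,r']})\le 2k_1D+k_2$, so $d(c(t),\sigma)\le(2k_1+1)D+k_2$. Taking $L_0$ larger than $D$, this last quantity, and $\lambda+\epsilon$ (to undo the taming) finishes the reduction.

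\textbf{Main obstacle.} Everything outside the divergence estimate is bookkeeping; the real content is that estimate together with the self-improving inequality $2^{D_0/\delta}\lesssim D_0$ in the previous paragraph, which is exactly what upgrades "$D_0<\infty$ for each given $c$" into "$D_0$ is bounded by a function of $\lambda,\epsilon,\delta$ alone". The other point that must be handled with some care is the taming step, where one has to verify that all the new constants $\lambda',\epsilon',k_1,k_2$ depend only on $\lambda$ and $\epsilon$.
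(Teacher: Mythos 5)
The paper does not prove this lemma itself; it simply cites \cite[Chapter III.H, Theorem 1.7]{BH99} and \cite[Theorem 11.72]{DK18}. Your argument is a correct and complete rendition of exactly that standard proof (taming, the logarithmic divergence estimate via dyadic subdivision, and the two inclusions with the exponential-versus-linear self-improvement), so it matches the intended source essentially verbatim.
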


\paragraph{\textbf{Isometries on \revise{Gromov-hyperbolic spaces}}} \revise{Let $(X,d)$ be a Gromov-hyperbolic space. Two geodesic rays in $X$ are said to be \textit{asymptotic} if the Hausdorff distance between them is finite. Being asymptotic is an equivalence relation on geodesic rays. The \textit{Gromov boundary} of $X$, denoted by $\partial X$, is defined to be the set of equivalence classes of geodesic rays in $X$. We refer the readers to \cite{BH99, DK18} for a detailed discussion about Gromov boundary.} By Gromov \cite{Gro87}, the isometries of a hyperbolic space $X$ can be subdivided into three classes. A nontrivial element $g\in \mathrm{Isom}(X)$ is called \textit{elliptic} if some $\langle g\rangle$-orbit is bounded. Otherwise, it is called \textit{loxodromic} (resp. \textit{parabolic}) if it has exactly two fixed points (resp. one fixed point)  in the Gromov boundary of $X$. If $g$ is a loxodromic element, any $\langle g\rangle$-invariant quasi-geodesic between the two fixed points will be referred to as  a \textit{quasi-axis} for $g$, denoted by $L_g$.

For an isometry $g$ on a hyperbolic space $(X,d)$, we define the \textit{stable translation length} $\Vert g\Vert$ of $g$ as the limit $$\Vert g\Vert:=\lim_{n\to\infty}\frac{d(x,g^nx)}{n}$$ for some (or any) $x\in X$. A well-known fact is that an isometry $g$ is loxodromic if and only if $\Vert g\Vert>0$ \revise{\cite[Chapter 10, Proposition 6.3]{CDP90}}.

\begin{lemma}\label{Lem: StableLength}
    Let $g$ be an isometry on a metric space $(X,d)$. Then for any $n>0$ and $x\in X$, one has $d(x,g^nx)\ge n\Vert g\Vert$.
\end{lemma}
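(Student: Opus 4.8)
The plan is to observe that $n \mapsto d(x, g^n x)$ is a subadditive sequence and then apply Fekete's subadditive lemma. Fix $x \in X$ and put $a_n := d(x, g^n x)$ for $n \ge 0$ (so that $a_0 = 0$). The crucial point is subadditivity: for all $m, n \ge 0$ the triangle inequality together with the fact that $g^m$ is an isometry gives
$$a_{m+n} = d(x, g^{m+n}x) \le d(x, g^m x) + d(g^m x, g^{m+n}x) = d(x, g^m x) + d(x, g^n x) = a_m + a_n.$$

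By Fekete's lemma, $\lim_{n\to\infty} a_n/n$ exists and equals $\inf_{k\ge 1} a_k/k$; in particular this confirms that $\Vert g\Vert$ (computed with the basepoint $x$) is well defined and coincides with that infimum. Hence for every $n > 0$,
$$\frac{d(x, g^n x)}{n} = \frac{a_n}{n} \ge \inf_{k\ge 1}\frac{a_k}{k} = \Vert g\Vert,$$
which rearranges to $d(x, g^n x) \ge n\Vert g\Vert$, as claimed. If one prefers to start from $\Vert g\Vert$ defined via an arbitrary basepoint $y$, one notes that $|d(x, g^n x) - d(y, g^n y)| \le 2 d(x, y)$ by the triangle inequality, so dividing by $n$ and letting $n\to\infty$ shows the limit is independent of the chosen basepoint, and the inequality follows for any normalization of $\Vert g\Vert$.

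I do not expect any genuine obstacle here: the entire argument is the standard subadditivity trick. The only step deserving a moment's attention is the existence of the limit defining $\Vert g\Vert$, which is exactly what Fekete's lemma supplies, and which simultaneously identifies the limit with the infimum over $k$ — this identification is precisely what makes the desired lower bound immediate.
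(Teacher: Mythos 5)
Your proof is correct and rests on the same idea as the paper's: the triangle inequality plus the isometry property gives subadditivity of $n\mapsto d(x,g^nx)$, from which the bound follows by passing to the limit. The paper argues slightly more economically — it only uses the special case $d(x,g^{mn}x)\le m\,d(x,g^nx)$ and lets $m\to\infty$ along the subsequence $mn$, avoiding any appeal to Fekete's lemma — but this is a presentational difference, not a different method.
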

\begin{proof}
    For any $m,n>0$, it follows from $d(x,g^{mn}x)\le m\cdot d(x,g^nx)$ that $\frac{d(x,g^{mn}x)}{mn}\le \frac{d(x,g^nx)}{n}$. By letting $m\to \infty$, one gets that $\Vert g\Vert\le \frac{d(x,g^nx)}{n}$.
\end{proof}


\subsection{Contracting \revise{subsets}}
Let $(X,d)$ be a geodesic metric space. For any two subsets $A,B\subset X$, define $d(A,B):=\inf_{x\in A, y\in B}d(x,y)$. For a given \revise{closed} subset $A\subset X$, define the \textit{closest point projection} $\pi_A: X\to \revise{2^A}$ as follows:
    \begin{itemize}
        \item for any $x\in X$, $\pi_A(x):=\{y\in A: d(x,y)=d(x,A)\}.$
        \item for any subset $B\subset X$, $\pi_A(B):=\cup_{x\in B}\pi_A(x)$.
    \end{itemize}
\revise{We use the notation $\diam(A)$ to denote the diameter of a subset $A$ in $X$.}
\begin{definition}[Contracting Subset]\label{DEF: Contracting Subset}
Let $(X,d)$ be a geodesic metric space. A subset $Y\subseteq X$ is called $C$-\textit{contracting} for $C\geq 0$ if for any geodesic (segment) $\alpha$ in $X$ with $d(\alpha,Y)\geq C$, we have $\diam(\mathrm{\pi}_Y(\alpha))\leq C$.
$Y$ is called a \textit{contracting subset} if there exists $C\geq 0$ such that $Y$ is $C$-contracting, and $C$ is called a \revise{\it{contraction constant}} of $Y$.

\begin{figure}[ht]
  \centering
  \includegraphics[width=10cm]{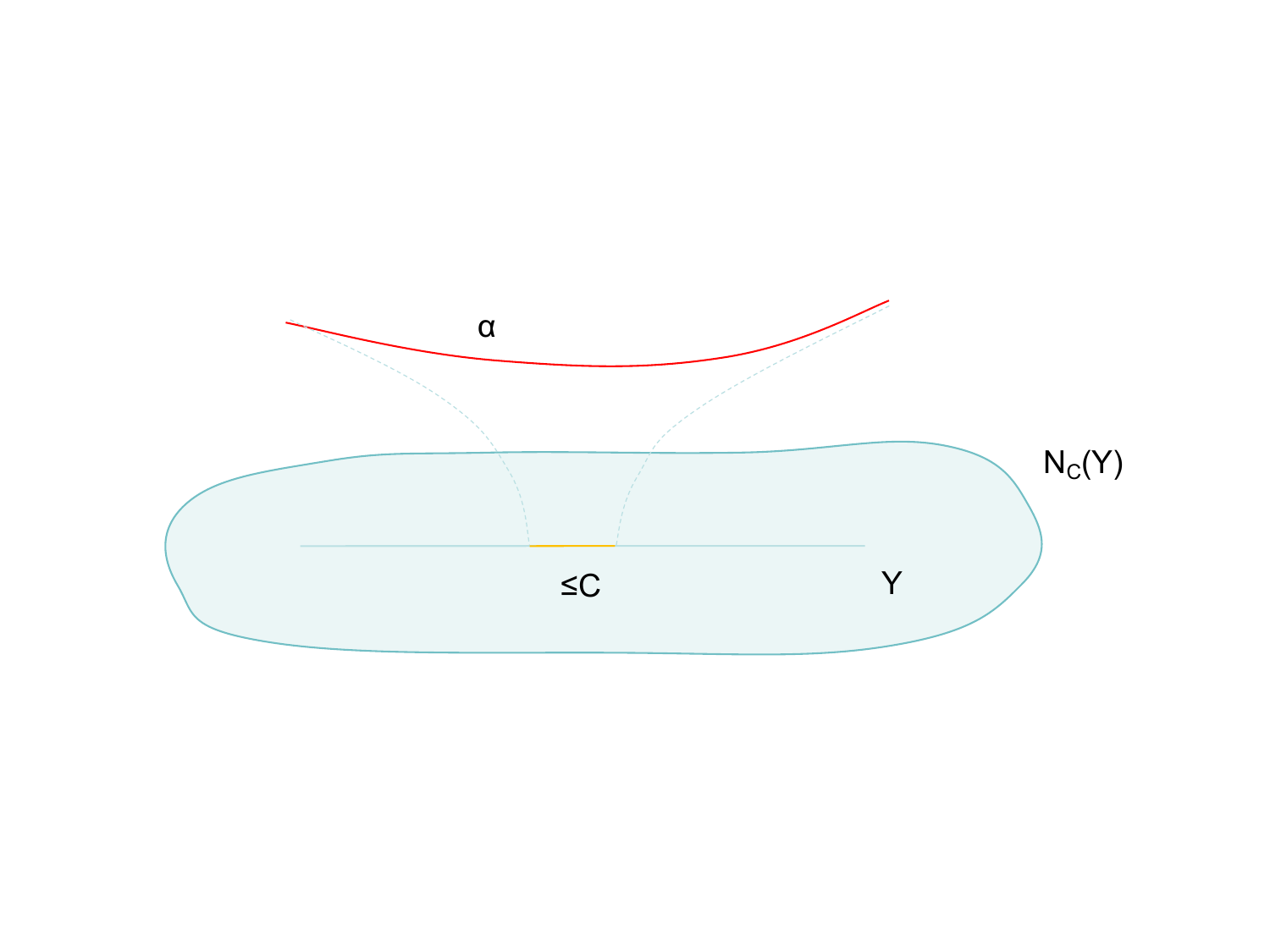}\\
  \caption{$Y$ is $C$-contracting}
\end{figure}

\revise{Let $G$ be a finitely generated group. A subgroup $H\le G$ is called \textit{contracting} if it is a contracting subset in the Cayley graph $\G(G, S)$ of $G$ with respect to some finite generating set $S$.}
\end{definition}
\begin{example}\label{EX: Contracting Subset}
The following are well-known examples of contracting subsets \revise{and contracting subgroups}.
\begin{enumerate}
    \item Bounded sets in a metric space.
    \item Quasi-geodesics and quasi-convex subsets in Gromov-hyperbolic spaces. \cite{Ghy90}
    \item Fully quasi-convex subgroups, and maximal parabolic subgroups in particular, in relatively hyperbolic groups. \cite[Proposition 8.2.4]{Ger15}
    \item The subgroup generated by a hyperbolic element in groups with non-trivial Floyd boundary. \cite[Section 7]{Yan14}
    \item Contracting segments and axes of rank-$1$ elements in $\mathrm{CAT}(0)$-spaces in the sense of Bestvina and Fujiwara. \cite[Corollary 3.4]{BF09}
    \item The axis of any pseudo-Anosov element in the Teichm\"uller space equipped with Teichm\"uller distance by \revise{a theorem of} Minsky. \cite{Min96}
\end{enumerate}
\end{example}

It has been proven in \cite[Corollary 3.4]{BF09} that the definition of a contracting subset is equivalent to the following one considered by  Minsky \cite{Min96}:

A subset $Y\subseteq X$ is contracting if and only if there exists $C'\geq 0$ such that any open metric ball $B$ with $B\cap Y=\emptyset$ satisfies $\diam(\mathrm{\pi}_Y(B))\leq C'$.

Despite this equivalence, we will always rely on Definition \ref{DEF: Contracting Subset} for a contracting subset.
\begin{definition}[Contracting System]
    Let $(X,d)$ be a geodesic metric space. \revise{A set} $\mathbb{X}=\{X_i: X_i\subset X\}_{i\in \mathbb N}$ is called a \textit{contracting system} with a contraction constant $C$ if each $X_i$ is a $C$-contracting subset in $X$.
\end{definition}

\begin{definition}[Bounded Intersection]\label{Def: BoundedIntersection}
Two subsets $Y,Z\subseteq X$ have $\mathcal{R}$-\textit{bounded intersection} for a function $\mathcal{R}:[0,+\infty)\to[0,+\infty)$ if $\diam(N_r(Y)\cap N_r(Z))\leq \mathcal{R}(r)$, for all $ r\geq 0$. A contracting system $\mathbb X$ has $\mathcal{R}$-\textit{bounded intersection} if any two elements in $\mathbb X$ have $\mathcal{R}$-\textit{bounded intersection}.
\end{definition}

\paragraph{\textbf{Admissible \revise{paths}}}
\revise{For a finite rectifiable path $p$ in a metric space $X$, we denote by $|p|$ the length of $p$ and by $p_-, p_+$ the initial and terminal points of $p$ respectively. }
\begin{definition}[Admissible Path]\label{DEF: Admissible Path}
Let $(X,d)$ be a geodesic metric space and $\mathbb{X}$ be a contracting system in $X$.  Let $D,\tau\geq 0$ and $\mathcal{R}:[0,+\infty)\to[0,+\infty)$ be a function, which is called the \textit{bounded intersection gauge}. A path $\gamma$ is called a $(D,\tau)$-\textit{admissible path}  with respect to $\mathbb{X}$, if the path $\gamma$ consists of a (finite, infinite, or bi-infinite) \revise{concatenation} of consecutive geodesic segments $\gamma=\cdots q_ip_iq_{i+1}p_{i+1}\cdots$, satisfying the following ``Long Local'' and ``Bounded Projection'' properties:

\begin{itemize}
\item[\textbf{(LL1)}]\label{LL1} For each $p_i$ there exists $X_i\in\mathbb{X}$ such that the two endpoints of $p_i$ lie in $X_i$, and $|p_i|>D$ unless $p_i$ is the first or last geodesic segment in $\gamma$.
\item[\textbf{(BP)}]\label{BP} For each $X_i$ we have $\diam(\pi_{X_i}\{(p_i)_+,(p_{i+1})_-\})\leq \tau$, and $\diam(\pi_{X_i}\{(p_{i-1})_+, (p_i)_-\})\leq \tau$. Here $(p_{i+1})_-=\gamma_+$ if $p_{i+1}$ does not exist, and $(p_{i-1})_+=\gamma_-$ if $p_{i-1}$ does not exist.
\item[\textbf{(LL2)}]\label{LL2} Either $X_i\neq X_{i+1}$ and $X_i$ and $X_{i+1}$ have $\mathcal{R}$-bounded intersection, or $\revise{|q_i|}>D$.
\end{itemize}
\end{definition}

\begin{remark}\label{Rmk: Saturation}
    \revise{See Figure \ref{Fig: AdmiPath} for an illustration of an admissible path.} The collection of $X_i\in \mathbb X$ indexed \revise{as} in (LL1), denoted by $\mathbb X(\gamma)$, will be referred to as \textit{contracting subsets} for $\gamma$. \revise{The union of all $X_i\in \X(\gamma)$ is called the \textit{saturation} of $\gamma$.}
\end{remark}

\begin{figure}[ht]
  \centering
  \includegraphics[width=12cm]{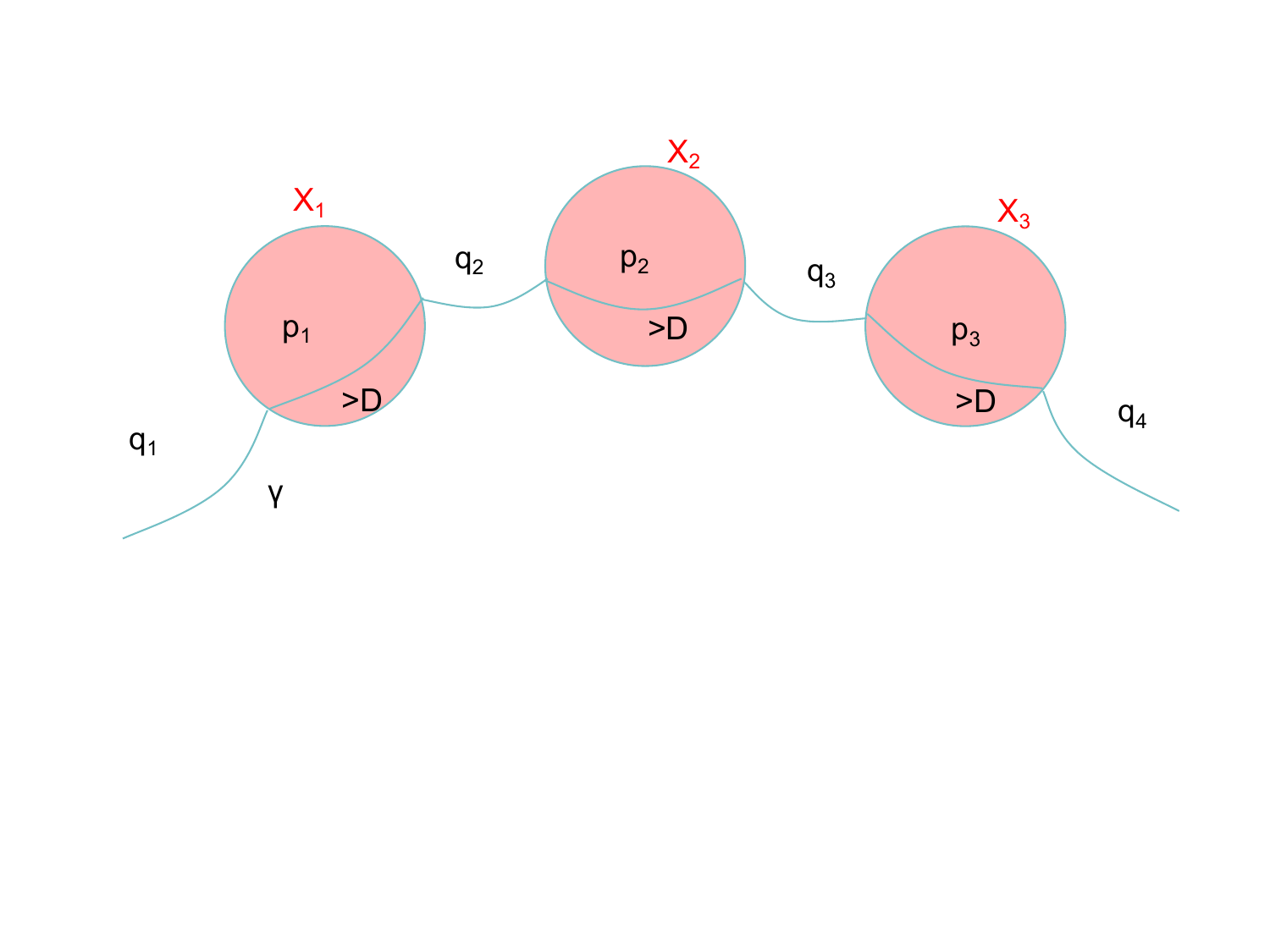}\\
  \caption{$\gamma=q_1p_1q_2p_2q_3p_3q_4$ is a $(D,\tau)$-admissible path}\label{Fig: AdmiPath}
\end{figure}

In the following definitions, a sequence of points $x_i$ in a path $\alpha$ is called \textit{linearly ordered} if \revise{$x_{i+1}$ lies in the subpath of $\alpha$ from $x_i$ to $\alpha_+$} for each $i$.

\begin{definition}[Fellow Travel]\label{DEF: Fellow Travel}
Let $\gamma=p_0q_1p_1\cdots q_np_n$ be a $(D,\tau)$-admissible path, and $\alpha$ be a path such that $\alpha_-=\gamma_-$, $\alpha_+=\gamma_+$. Given $\epsilon>0$, the path $\alpha$ $\epsilon$-\textit{fellow travels} $\gamma$ if there exists a sequence of linearly ordered points $z_0,w_0,z_1,w_1,\cdots,z_n,w_n$ on $\alpha$ such that $d(z_i,(p_i)_-)\leq \epsilon$, $d(w_i,(p_i)_+)\leq \epsilon$.
\end{definition}
\revise{See Figure \ref{Fig: FellowTravel} for an illustration of $\epsilon$-fellow travel property.}

\begin{figure}[ht]
  \centering
  \includegraphics[width=12cm]{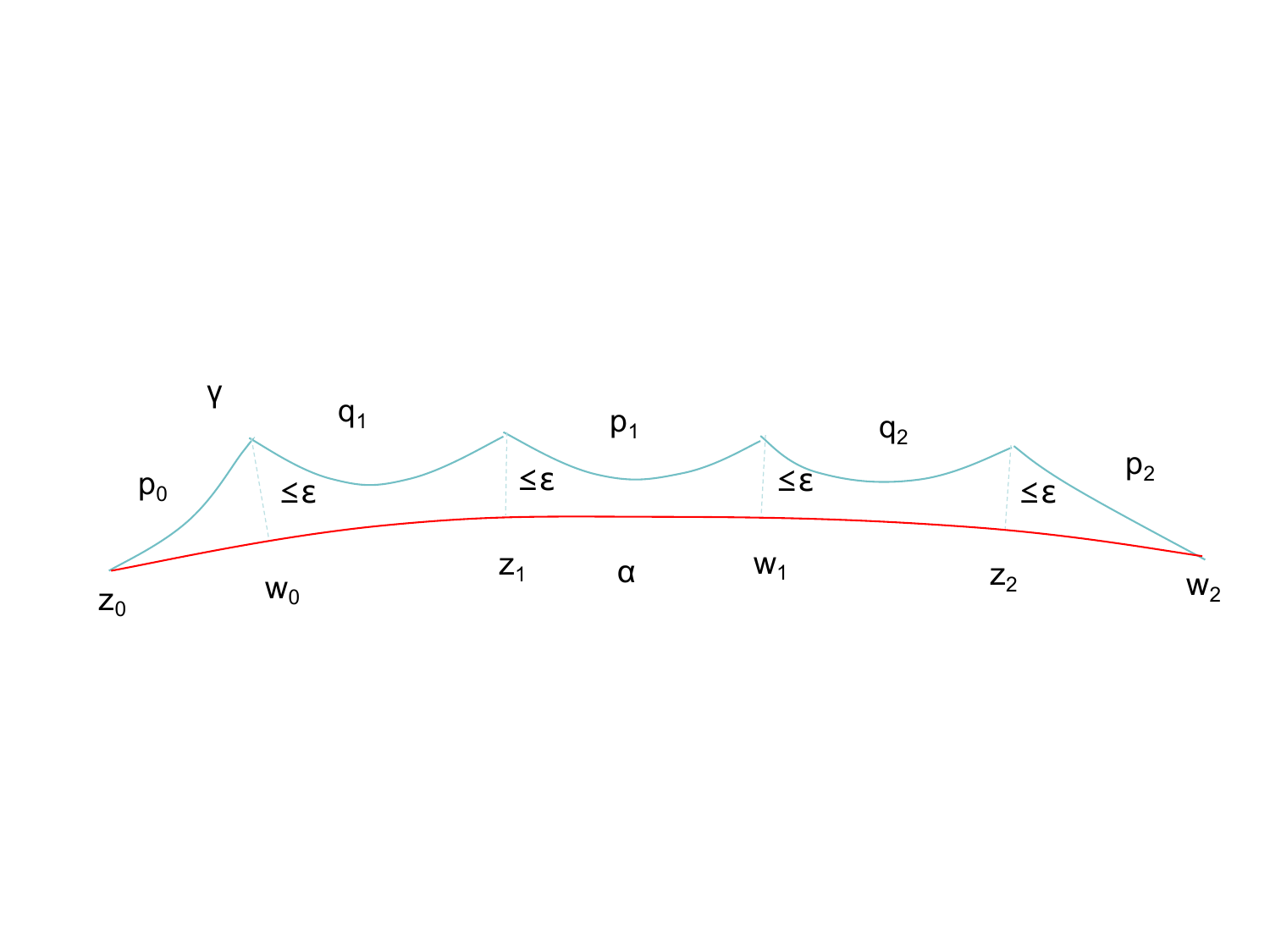}\\
  \caption{$\gamma=p_0q_1p_1q_2p_2$ is a $(D,\tau)$-admissible path and $\alpha$ $\epsilon$-fellow travels $\gamma$}\label{Fig: FellowTravel}
\end{figure}

\begin{proposition}[{\revise{\cite[Proposition 2.7]{Yan19}}} ]\label{PROP: Fellow Travel}
For any $\tau>0$\revise{, and any function $\mathcal R:[0,+\infty)\to [0,+\infty)$}, there exist $D,\lambda, \epsilon>0$ depending on $\tau,\revise{\mathcal R}$ such that any $(D,\tau)$-admissible path is a $\lambda$-quasi-geodesic which \revise{is $\epsilon$-fellow traveled by} any geodesic with the same endpoints.
\end{proposition}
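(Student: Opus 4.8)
The final statement to prove is Proposition~\ref{PROP: Fellow Travel}, due to Yang: for any $\tau > 0$ and any function $\mathcal{R} \colon [0,+\infty) \to [0,+\infty)$, there exist $D, \lambda, \epsilon > 0$ depending on $\tau, \mathcal{R}$ such that every $(D,\tau)$-admissible path is a $\lambda$-quasi-geodesic $\epsilon$-fellow traveled by any geodesic with the same endpoints.

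Let me think about how to prove this.

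We have an admissible path $\gamma = \cdots q_i p_i q_{i+1} p_{i+1} \cdots$, where the $p_i$ have endpoints on contracting subsets $X_i \in \mathbb{X}$ (all $C$-contracting for a uniform $C$), with long local length $> D$ for the "middle" $p_i$, bounded projections of the transition points ($\leq \tau$), and the (LL2) condition ensuring consecutive contracting subsets are "independent" (bounded intersection) or separated by a long $q_i$.

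The key strategy is the standard one for such local-to-global results in a non-hyperbolic setting where we have a contracting system: use the contracting property to show that any geodesic $\alpha$ with the same endpoints as $\gamma$ must pass near each $p_i$.

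Step 1: Show the geodesic $\alpha$ enters a neighborhood of each $X_i$. Suppose $\alpha$ stays far from $X_i$ (distance $\geq C$). Then $\diam(\pi_{X_i}(\alpha)) \leq C$. But the endpoints of $\alpha$ are $\gamma_-$ and $\gamma_+$. We need to show the projections of $\gamma_-$ and $\gamma_+$ to $X_i$ are far apart — specifically, they include the endpoints of $p_i$ which are far apart (length $> D$), modulo the bounded projection corrections. Actually this requires an inductive argument: we need to show that $\pi_{X_i}(\gamma_-)$ is close to $(p_i)_-$ and $\pi_{X_i}(\gamma_+)$ is close to $(p_i)_+$.

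Step 2: The core lemma: for each $i$, $\diam(\pi_{X_i}(\gamma_- \cup (p_i)_-)) \leq$ (some constant depending on $\tau, \mathcal{R}$, $C$), and similarly $\diam(\pi_{X_i}(\gamma_+ \cup (p_i)_+)) \leq$ const. This is proven by induction along $\gamma$: going from $\gamma_-$ toward $(p_i)_-$, each subpath $q_j p_j$ (for $j < i$) has bounded projection to $X_i$. One uses (BP) to control $\pi_{X_i}\{(p_j)_+, (p_{j+1})_-\}$, and then needs the bounded intersection / long $q$ condition (LL2) together with contracting property to "transfer" the projection estimate past $p_j$ — i.e., $\pi_{X_i}(p_j)$ is bounded because either $X_j$ and $X_i$ have bounded intersection (so $X_j$ can't stay near $X_i$ for long, forcing the projection to be short), or $q_j$ is long enough that... hmm, actually $q_j$ is between $p_{j-1}$ and $p_j$, so a long $q_j$ means $p_{j-1}$ and $p_j$ are separated. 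Let me reconsider: one shows by downward/upward induction that the projection of the whole initial segment of $\gamma$ up to $(p_i)_-$ onto $X_i$ has uniformly bounded diameter. This combined with $|p_i| > D$ and choosing $D$ large shows $\pi_{X_i}(\gamma_-)$ and $\pi_{X_i}(\gamma_+)$ are more than $C$ apart, contradicting $\diam(\pi_{X_i}(\alpha)) \leq C$ if $\alpha$ avoided $N_C(X_i)$.

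Step 3: So $\alpha$ must intersect $N_C(X_i)$ for each $i$; moreover, by the projection estimates, $\alpha$ enters $N_{C'}(X_i)$ near a point projecting close to $(p_i)_-$ and exits near a point projecting close to $(p_i)_+$. Using that $X_i$ is contracting and $p_i$ is a geodesic in $X_i$... wait, the endpoints of $p_i$ lie in $X_i$ but $p_i$ itself is a geodesic in $X$. One then gets points $z_i, w_i$ on $\alpha$ within bounded distance of $(p_i)_-, (p_i)_+$. The linear ordering of $z_0, w_0, z_1, w_1, \ldots$ on $\alpha$ follows because the projections are "monotone" along $\gamma$ (again from the induction).

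Step 4: Deduce the quasi-geodesic estimate. Since $\alpha$ passes through points $\epsilon$-close to all the $(p_i)_\pm$ in order, and $\alpha$ is a geodesic, $|\alpha| \geq \sum |p_i| - (\text{error})$, and on the other hand $|\gamma| \leq \sum |p_i| + \sum |q_i|$; one needs to bound $\sum |q_i|$ in terms of $\sum |p_i| + d(\gamma_-, \gamma_+)$. Since the $z_i, w_i$ are ordered on $\alpha$ and $d(w_{i-1}, z_i) \approx |q_i|$ but also $d(w_{i-1}, z_i) \leq$ (arc of $\alpha$), summing gives $\sum |q_i| \lesssim d(\gamma_-, \gamma_+) + (\text{number of segments}) \cdot \epsilon$, and the number of segments is $\lesssim \sum|p_i|/D \lesssim |\gamma|/D$, which closes the estimate to give $|\gamma| \leq \lambda \, d(\gamma_-,\gamma_+) + \lambda$.

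The main obstacle I expect is Step 2 — the inductive projection estimate showing that $\pi_{X_i}$ of the whole initial portion of $\gamma$ stays bounded. This is delicate because it requires carefully combining the three conditions (LL1), (BP), (LL2): the bounded-projection condition only controls transition points directly, and one needs the contracting property plus bounded intersection (or long $q_i$) to propagate the estimate across each segment $p_j$ and $q_j$. Getting the constants to depend only on $\tau$ and $\mathcal{R}$ (and the uniform contraction constant $C$), and choosing $D$ large enough at the end to beat all accumulated constants, is the technical heart. I would set up a quantity like $d(x, \pi_{X_i}((p_i)_-))$ for $x$ ranging over the initial subpath and show it stays below a threshold by an induction that at each step either uses (BP) directly or invokes the contracting inequality $\diam(\pi_{X_i}(\beta)) \leq C$ for geodesic subpaths $\beta$ staying $C$-far from $X_i$ — and handle the case where $\beta$ comes $C$-close to $X_i$ by the bounded intersection gauge $\mathcal{R}$. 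Since this is Yang's result \cite[Proposition 2.7]{Yan19}, I would in practice cite it, but the above is the route one would take to reprove it.
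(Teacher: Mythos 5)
The paper does not prove this proposition at all: it is imported verbatim as \cite[Proposition 2.7]{Yan19}, so there is no in-paper argument to compare against. Your outline does follow the architecture of Yang's actual proof (and of the analogous local-to-global arguments for contracting systems elsewhere in the literature): force any geodesic $\alpha$ joining $\gamma_-$ to $\gamma_+$ into $N_C(X_i)$ by showing that otherwise $\diam(\pi_{X_i}(\alpha))\le C$ while the projections of the two endpoints onto $X_i$ are separated by roughly $|p_i|>D$; extract the ordered points $z_i,w_i$; then run the length-comparison in your Step 4, which is essentially correct as written (one gets $|\gamma|\le|\alpha|+4n\epsilon+O(1)$ with $n\lesssim|\gamma|/D$, so taking $D\gg\epsilon$ closes the estimate).

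The genuine gap is exactly where you say it is, and it is not closed: Step 2, the claim that $\diam\bigl(\pi_{X_i}(\text{initial segment of }\gamma\text{ up to }(p_i)_-)\bigr)$ is bounded by a constant depending only on $\tau$, $\mathcal R$ and the contraction constant $C$. Everything else in the proposition reduces to this, and your write-up only describes the shape of the induction (``one would set up a quantity and show it stays below a threshold'') without executing a single inductive step. In particular you visibly do not resolve how to propagate the bound across a block $p_jq_{j+1}$ in the second alternative of (LL2), where $X_j$ and $X_{j+1}$ need \emph{not} have $\mathcal R$-bounded intersection and one must instead exploit the long connecting geodesic $q_{j+1}$ together with the contracting property and (BP) to prevent the projection onto $X_i$ from drifting; your own text trails off at precisely this point (``or $q_j$ is long enough that\ldots hmm''). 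There is also an unaddressed circularity to manage: $D$ must be chosen after the accumulated projection constants are known, yet $D$ appears in the hypotheses (LL1) and (LL2) that drive the induction, so the induction has to be set up so that its constants are independent of $D$. Since the entire content of the proposition lives in this estimate, the proposal as it stands is an accurate road map rather than a proof; citing \cite[Proposition 2.7]{Yan19}, as the paper does, remains the honest resolution.
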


\paragraph{\textbf{Group \revise{actions} with contracting elements}}

Let $G$ be a group acting isometrically on a geodesic metric space $(X,d)$ with a base point $o\in X$.

\begin{definition}[Contracting Element]\label{DEF: Contracting Element}
An element $h\in G$ is called a \textit{contracting element} if $\left\langle h\right\rangle\cdot o$ is a contracting subset in $X$ and the map $\mathbb{Z}\to X$, $n\mapsto h^no$ is a quasi-isometric embedding.
\end{definition}

\revise{A group action $G\curvearrowright X$ is called \textit{(metrically) proper} if for any $D>0$, the set $\{g\in G: d(o,go)\le D\}$ is finite.} From now on, we assume that $G$ acts properly on $(X,d)$ with a contracting element.

\begin{definition}[Weakly-independent]\label{Def: weakly-independence}
Suppose $g,h\in G$ are two contracting elements. \revise{They} are called \textit{weakly-independent} if $\left\langle g\right\rangle\cdot o$ and $\left\langle h\right\rangle\cdot o$ have $\mathcal{R}$-bounded intersection for some $\mathcal{R}:[0,+\infty)\to[0,+\infty)$.
\end{definition}

By \cite[Lemma 2.11]{Yan19}, each contracting element $g$ is contained in a maximal elementary subgroup $E(g)$ defined as follows:
$$E(g) = \{h \in G : \exists r > 0, h\langle g\rangle o\subset  N_r(\langle g\rangle o) \text{ and } \langle g\rangle o\subset  N_r(h\langle g\rangle o)\},$$
and the index $[E(g) : \langle g\rangle]$ is finite. \revise{Roughly speaking, the subgroup $E(g)$ is the set of elements that do not move the orbit of $g$ too much.} Hence, if $G$ is not virtually cyclic, then there are at least two weakly-independent contracting elements in $G$. \revise{For example, let $g\in G$ be a contracting element by assumption. Since $G$ is not virtually cyclic, $E(g)$ is a proper subgroup of $G$. By selecting an element $f\in G\setminus E(g)$, one has that $g$ and $fgf^{-1}$ are weakly-independent \cite[Lemma 7.13]{WXY24}.} Actually, by \cite[Lemma 2.30]{WXY24}, there exist infinitely many pairwise weakly-independent elements in $G$.

For a contracting element $g$, the subset denoted by $\ax(g) := E(g)o$ is called an \textit{extended-defined quasi-geodesic}, which is also called the \textit{axis} of $g$ (depending on $o$). Compared with the quasi-axis $L_g$ of a loxodromic element $g$ which can be an arbitrary $\langle g\rangle$-invariant quasi-geodesic, we use the term ``extended-defined'' here to emphasize their difference.

The following result, proved in \cite[Proposition 2.9 and Lemma 2.14]{Yan19}, gives a procedure to
construct \revise{infinitely many} contracting elements.

\begin{lemma}\revise{[Extension Lemma]}\label{Lem: ExtensionLemma}
    Suppose that a non-elementary group $G$ acts properly on $(X, d)$ with a contracting element. Then there exist a set $F \subset G$ of three contracting elements and $D,\tau,\lambda > 0$ with the following property.

    For any $g \in G$, there exists $f \in F$ such that the bi-infinite concatenated path $\gamma=\cup_{n\in \mathbb Z}(gf)^n[o,gfo]$ is a $(D,\tau)$-admissible path with contracting subsets $\{(gf)^ng\ax(f): n\in \mathbb Z\}$.  In particular, $\gamma$ is a $C$-contracting $\lambda$-quasi-geodesic, where $C$ depends on $d(o,go)$. Hence $gf$ is a contracting element.
\end{lemma}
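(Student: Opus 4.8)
The plan is to follow Yang's argument \cite[Proposition 2.9, Lemma 2.14]{Yan19}: let $F$ consist of three suitable high powers of pairwise weakly-independent contracting elements, and verify the defining conditions \textbf{(LL1)}, \textbf{(BP)}, \textbf{(LL2)} of a $(D,\tau)$-admissible path directly, by a projection estimate together with a pigeonhole over three axes. Since $G$ is non-elementary there are pairwise weakly-independent contracting elements $h_1,h_2,h_3\in G$. Replacing each by a large power $f_j:=h_j^{N}$ (which changes neither $E(f_j)=E(h_j)$ nor $\ax(f_j)=\ax(h_j)$), the family $\mathbb X_0:=\{a\,\ax(f_j): a\in G,\ 1\le j\le 3\}$ is a contracting system with a single contraction constant $C_0$ and a single bounded-intersection gauge $\mathcal R_0$: distinct $G$-translates of one $\ax(f_j)$ have $\mathcal R_0$-bounded intersection since $\mathrm{Stab}(\ax(f_j))=E(f_j)$, and translates coming from different indices do because the $h_j$ are weakly independent. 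We then fix $\tau$ \emph{large} (how large being dictated by $C_0$, $\mathcal R_0$ and the Morse constant of Lemma \ref{Lem: MorseLemma}), let $D,\lambda,\epsilon$ be the output of Proposition \ref{PROP: Fellow Travel} for this $\tau$ and $\mathcal R_0$, and finally enlarge $N$ so that $d(o,f_jo)>D$ for each $j$. Put $F=\{f_1,f_2,f_3\}$.

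Fix $g\in G$ and $f=f_j$, and regard $\gamma=\bigcup_{n\in\mathbb Z}(gf)^n[o,gfo]$ as the concatenation $\cdots q_{-1}p_{-1}q_0p_0q_1p_1\cdots$ of geodesics $q_n:=(gf)^n[o,go]$ and $p_n:=(gf)^n[go,gfo]=(gf)^n g[o,fo]$; since $o,fo\in\ax(f)$, the segment $p_n$ has both endpoints in $X_n:=(gf)^n g\,\ax(f)$. By $\langle gf\rangle$-equivariance it suffices to test the three conditions at $X_0$, and using $\pi_{aY}(ax)=a\,\pi_Y(x)$ together with $f\,\ax(f)=\ax(f)$ they reduce to: \textbf{(LL1)} $|p_n|=d(o,fo)>D$, true by construction; \textbf{(BP)} $\diam\big(\{o\}\cup\pi_{\ax(f)}(go)\big)\le\tau$ and $\diam\big(\{o\}\cup\pi_{\ax(f)}(g^{-1}o)\big)\le\tau$; \textbf{(LL2)} $X_0\neq X_1$ --- equivalently $g\notin E(f)$, in which case the $\mathcal R_0$-bounded intersection comes from $\mathbb X_0$ --- or else $|q_n|=d(o,go)>D$.

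It remains to find, for each $g$, an $f\in F$ meeting all three conditions. The crucial point is that a geodesic $[o,go]$ issuing from $o$ can stay $r$-close to at most one of $\ax(f_1),\ax(f_2),\ax(f_3)$ for a length exceeding $\mathcal R_0(r)$: otherwise, since all three contain $o$, an initial sub-segment of $[o,go]$ would lie in an intersection of two $r$-neighbourhoods of members of $\mathbb X_0$, whose diameter is $\le\mathcal R_0(r)$. On the other hand, if $\diam\big(\{o\}\cup\pi_{\ax(f)}(go)\big)>\tau$, then (since $[o,p]\cup[p,go]$ for $p\in\pi_{\ax(f)}(go)$ is a uniform quasi-geodesic, by Lemma \ref{Lem: MorseLemma}) the geodesic $[o,go]$ fellow-travels $\ax(f)$ for a length comparable to $\tau$, which for $\tau$ large forces the ``long'' alternative above. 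Hence at most one $f_j$ violates the first inequality of \textbf{(BP)}, and, running the same argument with $g^{-1}$, at most one violates the second; with three elements at least one $f=f_j$ satisfies \textbf{(BP)}. If in addition $d(o,go)>D$ then \textbf{(LL2)} holds for free and $\gamma$ is $(D,\tau)$-admissible with contracting subsets $\{X_n\}$. The remaining $g$ have $d(o,go)\le D$ and hence are finitely many (properness); for them \textbf{(BP)} holds automatically for every $f_j$ (closest-point projection is coarsely Lipschitz and $go$ is $D$-close to $o$), so we may instead pick $f_j$ with $g\notin E(f_j)$, which exists because the $E(f_j)$ meet pairwise in finite subgroups, so an infinite-order $g$ lies in at most one of them; and if $g$ is torsion with $g\in E(f_j)$, then $gf_j\in E(f_j)$ and $\gamma\subset\ax(f_j)$ is a contracting quasi-geodesic outright.

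Once $\gamma$ is $(D,\tau)$-admissible, Proposition \ref{PROP: Fellow Travel} makes it a $\lambda$-quasi-geodesic that is $\epsilon$-fellow-traveled by every geodesic with the same endpoints, with $\lambda,\epsilon$ independent of $g$. Feeding this back through the uniform $C_0$-contraction of the pieces $X_n$ (a geodesic far from $\gamma$ has bounded projection to each $X_n$ that it comes near, hence bounded projection to $\gamma$ itself) shows that $\gamma$ is $C$-contracting, where the only dependence on $g$ enters through the common length $d(o,go)$ of the ``gap'' segments $q_n$, i.e. $C=C(d(o,go))$. Since $\gamma$ is $\langle gf\rangle$-invariant, $gf$ is a contracting element, completing the proof. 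The step I expect to be the real obstacle is the third paragraph: making rigorous both the ``at most one long fellow-traveler'' claim and the passage from a large projection diameter to long fellow-traveling (with all constants arranged so that ``$\tau$ large'' does exactly what is needed), and then confirming that three elements --- and not more --- genuinely suffice once the \textbf{(LL2)} bookkeeping is routed through $d(o,go)$ rather than through the subgroups $E(f_j)$.
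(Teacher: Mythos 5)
The paper does not actually prove this lemma; it is imported verbatim from \cite[Proposition 2.9 and Lemma 2.14]{Yan19}, and your reconstruction follows exactly that route: three pairwise weakly-independent contracting elements raised to high powers, a pigeonhole on projections via bounded intersection to secure \textbf{(BP)} for at least one $f\in F$, and Proposition \ref{PROP: Fellow Travel} to upgrade admissibility to the contracting quasi-geodesic conclusion. Your main line --- the reduction of \textbf{(BP)} to $\diam(\{o\}\cup\pi_{\ax(f)}(g^{\pm 1}o))\le\tau$, the ``at most one long fellow-traveller issuing from $o$'' claim, and the identification of the \textbf{(LL2)} failure with $g\in E(f)$ --- is correct and is the standard argument.

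The gap is in your treatment of the exceptional case $d(o,go)\le D$, which you yourself flag. First, the assertion that \textbf{(BP)} then ``holds automatically for every $f_j$'' only yields $\diam(\{o\}\cup\pi_{\ax(f_j)}(g^{\pm1}o))$ bounded by a fixed multiple of $d(o,go)\le D$, and this does not give $\le\tau$: in your quantifier order $\tau$ is fixed first and $D=D(\tau)$ from Proposition \ref{PROP: Fellow Travel} must be taken large compared with $\tau$, so ``$\le$ const$\cdot D$'' is strictly weaker than ``$\le\tau$''. The usable substitute is: if $g\in E(f_i)$ then $o,g^{\pm1}o\in\ax(f_i)$, so for $j\ne i$ the quantity $\diam(\{o\}\cup\pi_{\ax(f_j)}(g^{\pm1}o))$ is controlled by the uniform, $\tau$-independent bounded projection of $\ax(f_i)$ onto $\ax(f_j)$, which one arranges to be $\le\tau$ at the outset. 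Second, the final fallback for torsion $g\in E(f_j)$ is wrong as stated: $gf_j\in E(f_j)$ does not make $\gamma$ ``a contracting quasi-geodesic outright,'' because $gf_j$ may have finite order --- if $E(f_j)$ surjects with finite kernel onto the infinite dihedral group and $g$ inverts the orientation of $\ax(f_j)$, then $gf_j$ again maps to a reflection, hence $(gf_j)^2$ lies in the finite kernel and $\gamma$ is a bounded set, so $gf_j$ is not contracting. For such $g$ one must instead produce some $f_{j'}\in F$ with $g\notin E(f_{j'})$ (whence \textbf{(LL2)} holds by distinctness and \textbf{(BP)} by the bounded-projection observation above); this requires ruling out a torsion element lying in, and inverting, all three $E(f_j)$ simultaneously --- either by an extra argument or by a more careful initial choice of $F$ --- and your proposal does not supply that step.
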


\subsection{Projection \revise{complexes}}\label{subsec: ProjectionComplex}
\revise{In the assumption of our main result, i.e. Theorem \ref{MainThm}, the geodesic metric space $X$ on which the group $G$ acts with contracting elements is not hyperbolic. To apply the method proposed by Bestvina-Fujiwara \cite{BF02}, we need to construct a suitable hyperbolic space on which $G$ acts. This construction makes use of the projection complex techniques developed by Bestvina-Bromberg-Fujiwara \cite{BBF15}, which will be introduced in this subsection. }
\begin{definition}[Projection axioms]\label{Def: ProjectionAxioms}
    Let $\mathcal F$ be a collection of metric spaces equipped with (set-valued) projection maps
    $$\{\pi_U: \mathcal F\setminus\{U\}\to \revise{2^U}\}_{U\in \mathcal F}.$$

Denote $d_U(V,W):=\revise{\diam(\pi_U(V)\cup \pi_U(W))}$ for $V\neq U\neq W\in \mathcal F$. The pair $(\mathcal F,\{U\}_{U\in \mathcal F})$ satisfies projection axioms for a constant $\kappa>0$ if
\begin{itemize}
    \item[(1)] $\revise{\diam(\pi_U(V))}\le \kappa$ when $U\neq V$.
    \item[(2)] if $U, V,W$ are distinct and $d_V (U,W) > \kappa$ then $d_U (V,W) \le \kappa$.
    \item[(3)] the set $\{U \in \mathcal F: d_U (V,W) > \kappa\}$ is finite for $V \neq W$.
\end{itemize}
\end{definition}
\rev{We caution the readers that the projection maps in the above definition are just abstract maps, which may not be the closest point projections defined previously.}
By definition, the following triangle inequality holds for any $U,V,Y,W\in \mathcal F$:
\begin{equation}\label{ProjectionTriangleInequa}
    d_Y (V,W) \le d_Y (V, U) + d_Y (U,W).
\end{equation}
It is well known that the projection axioms hold for a contracting system with bounded intersection (cf. \cite[Appendix]{Yan14}). From now on, we assume that $G$ acts properly on $X$ with contracting elements. Fix a base point $o\in X$.

\begin{lemma}\cite[Lemma 2.18]{Yan22b}\label{Lem: one element}
    \revise{Let} $g\in G$ be a contracting element. Then the collection $\mathcal F = \{f\ax(g) : f \in G\}$ with closest point projections $\pi_U (V )$ satisfies the projection axioms with a constant $\kappa = \kappa(\mathcal F) > 0$.
\end{lemma}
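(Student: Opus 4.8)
The plan is to deduce the projection axioms from the general principle, recalled just above, that any contracting system with bounded intersection satisfies them (cf.\ \cite[Appendix]{Yan14}). So it suffices to establish two things: first, that $\mathcal F=\{f\ax(g):f\in G\}$ is a contracting system with a uniform contraction constant; and second --- the substantive point --- that it has uniformly bounded intersection, i.e.\ that there is a function $\mathcal R\colon[0,+\infty)\to[0,+\infty)$ with $\diam\bigl(N_r(U)\cap N_r(V)\bigr)\le\mathcal R(r)$ for all $r\ge0$ whenever $U\ne V$ in $\mathcal F$. Granting these, the constant $\kappa=\kappa(\mathcal F)$ is produced by the cited result.

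The first point is routine. Since $g$ is contracting, $\langle g\rangle o$ is a $C_0$-contracting $\lambda$-quasi-geodesic. Because $[E(g):\langle g\rangle]<\infty$ and, by the very definition of $E(g)$, each coset orbit $h\langle g\rangle o$ with $h\in E(g)$ lies within finite Hausdorff distance of $\langle g\rangle o$, the axis $\ax(g)=E(g)o$ lies within finite Hausdorff distance of $\langle g\rangle o$; a subset at finite Hausdorff distance from a contracting subset is again contracting, with a controlled constant, so $\ax(g)$ is $C$-contracting for some $C=C(g)$. Since $G$ acts isometrically, every $f\ax(g)$ is then $C$-contracting, so $\mathcal F$ is a contracting system with constant $C$.

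For the second point I would argue as follows. Translating by an element of $G$, it suffices to bound $\diam\bigl(N_r(\ax(g))\cap N_r(f\ax(g))\bigr)$ for $f\notin E(g)$ (which is exactly the condition $f\ax(g)\ne\ax(g)$), and, after enlarging the gauge, one may replace $\ax(g),f\ax(g)$ by the orbits $\langle g\rangle o,f\langle g\rangle o$. Fix $r$ and take $x,y\in N_r(\langle g\rangle o)\cap N_r(f\langle g\rangle o)$, with $g^{a_1}o,g^{a_2}o$ within $r$ of $x,y$. Here I would invoke the Morse property of contracting quasi-geodesics, which holds in an arbitrary geodesic metric space: a geodesic joining two points each lying in the $R$-neighborhood of a $C$-contracting $\lambda$-quasi-geodesic stays in its $L$-neighborhood, $L=L(C,\lambda,R)$. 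Applying this to $\langle g\rangle o$ and to $f\langle g\rangle o$ --- whose $(r+\mathrm{const})$-neighborhoods both contain $g^{a_1}o$ and $g^{a_2}o$ --- the geodesic $[g^{a_1}o,g^{a_2}o]$ lies in a common $L$-neighborhood, $L=L(r)$, of both $\langle g\rangle o$ and $f\langle g\rangle o$. Sampling points $z_0,\dots,z_m$ at unit spacing along $[g^{a_1}o,g^{a_2}o]$ with $m\asymp d(x,y)$, each $z_j$ is $L$-close to some $g^{c_j}o$ and to some $fg^{e_j}o$, so $w_j:=g^{-c_j}fg^{e_j}$ satisfies $d(o,w_jo)\le2L$ and hence lies in the finite set $W:=\{w\in G:d(o,wo)\le2L\}$ --- finite by metric properness, and depending only on $r$ and $g$, not on $f$. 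Since $n\mapsto g^no$ is a quasi-isometric embedding, the $c_j$ realize $\asymp m$ distinct values, so once $d(x,y)$ exceeds a bound depending only on $|W|,\lambda$ a counting argument yields $j\ne j'$ with $w_j=w_{j'}$ and $c_j\ne c_{j'}$; then $f^{-1}g^{\,c_{j'}-c_j}f=g^{\,e_{j'}-e_j}$ with $c_{j'}-c_j\ne0$, so $f^{-1}\langle g\rangle f\cap\langle g\rangle$ contains an infinite cyclic subgroup. Unwinding Hausdorff distances (or, equivalently, using that $E(g)$ equals the commensurator of $\langle g\rangle$ for a contracting element) this forces $f\in E(g)$, contradicting the choice of $f$. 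Hence $d(x,y)\le\mathcal R_0(r)$ for a function of $r$ and $g$ alone, and tracing back the gauge adjustments gives the desired $\mathcal R$, uniform over all $U\ne V$ in $\mathcal F$.

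The main obstacle is this second step, and precisely the requirement that the overlap bound be \emph{uniform} in the translating element $f$. The argument circumvents this because the only finiteness it uses is the finiteness of $W$, which issues from metric properness and depends on $r$ and $g$ but not on $f$; the Morse property of contracting quasi-geodesics is what upgrades ``the $r$-neighborhoods overlap in large diameter'' into ``there are genuinely many near-coincidences $g^{c_j}o\approx fg^{e_j}o$'', after which properness forces a commensuration relation ruled out by $f\notin E(g)$.
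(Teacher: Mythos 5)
The paper does not actually prove this lemma---it is imported from \cite[Lemma 2.18]{Yan22b}---but the route it points to is exactly the one you take: show that $\mathcal F=\{f\ax(g):f\in G\}$ is a contracting system with bounded intersection and then invoke the general principle from \cite[Appendix]{Yan14}. Your argument is correct, and in particular the substantive step---using the Morse property of contracting sets plus metric properness and a pigeonhole count to bound $\diam\bigl(N_r(\ax(g))\cap N_r(f\ax(g))\bigr)$ uniformly in $f\notin E(g)$---is the standard way this uniform bounded intersection is established.
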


In \cite[Definition 3.1]{BBF15}, a modified version of $d_U$ is introduced such that it is symmetric and agrees with the
original $d_U$ up to an additive amount $2\kappa$. Thus, the axioms (1)-(3) still hold for $3\kappa$, and the triangle
inequality in (\ref{ProjectionTriangleInequa}) holds up to a uniform error. In what follows, we actually need to work with this modified $d_U$ to define the projection complex, but for the sake of simplicity, we stick \revise{to} the above definition of $d_U$.

We consider the interval-like set for $K > 0$ and $V,W \in \mathcal F$ as follows
$$\mathcal F_K(V,W) := \{U \in \mathcal F : d_U (V,W) > K\}.$$
Denote $\mathcal F_K[V,W] := \mathcal F_K(V,W)\cup \{V,W\}$. It possesses a total order described in the next lemma. \rev{Let $\F$ and $\kappa$ be given by Lemma \ref{Lem: one element}.}
\begin{lemma}\cite[Theorem 3.3.G]{BBF15}\label{TotalOrder}
     There exist constants $D = D(\kappa), K = K(\kappa) > 0$ such that the set $\mathcal F_K[V,W]$ admits a total order ``$<$'' with least element $V$ and greatest element $W$, such that given $U_0, U_1, U_2 \in \mathcal F_K[V,W]$, if $U_0 < U_1 < U_2$, then
$$d_{U_1}(V,W) - D \le d_{U_1}(U_0, U_2) \le d_{U_1}(V,W),$$
and
$$d_{U_0}(U_1, U_2) \le D \text{ and } d_{U_2}(U_0, U_1) \le D.$$
\end{lemma}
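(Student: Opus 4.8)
The plan is to follow Bestvina--Bromberg--Fujiwara \cite[\S 3]{BBF15}; I sketch the argument. Throughout one works with the modified symmetric function $d_U$ mentioned above, so that the projection axioms hold for $3\kappa$ and (\ref{ProjectionTriangleInequa}) holds up to a uniform additive error; one then fixes $K=K(\kappa)$ large relative to $\kappa$ (say $K\ge 10\kappa$) and sets $D=D(\kappa)$ to be $2\kappa$ plus this error. The crucial first step is a \emph{dichotomy}: for any two distinct $U,U'\in\mathcal F_K(V,W)$, exactly one of the configurations
\[
\text{(a) } d_U(V,U')>\kappa \qquad\text{or}\qquad \text{(b) } d_{U'}(V,U)>\kappa
\]
holds. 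Indeed, since $U\in\mathcal F_K(V,W)$ one has $d_U(V,W)>K$, so by (\ref{ProjectionTriangleInequa}) one of $d_U(V,U'),d_U(U',W)$ exceeds $K/2-\kappa>\kappa$. If $d_U(U',W)>\kappa$, then by axiom (2) (in the form ``$d_X(Y,Z)>\kappa\Rightarrow d_Y(X,Z)\le\kappa$'') one gets $d_{U'}(U,W)\le\kappa$, whence $d_{U'}(V,U)\ge d_{U'}(V,W)-d_{U'}(U,W)>K-\kappa>\kappa$; and then $d_U(V,U')\le\kappa$, for otherwise the same axiom would give $d_{U'}(U,V)\le\kappa$, contradicting the previous inequality. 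Thus (a) and (b) are mutually exclusive and exhaustive, and the argument also yields the reformulations: (a) is equivalent to each of $d_U(U',W)\le\kappa$, $d_{U'}(U,V)\le\kappa$, $d_{U'}(U,W)>\kappa$.

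I then define the order on $\mathcal F_K[V,W]$ by declaring $V$ least, $W$ greatest, and, for $U,U'\in\mathcal F_K(V,W)$, setting $U<U'$ precisely when configuration (a) holds (with $U,U'$ in that order). Totality and antisymmetry are immediate from the dichotomy. For transitivity, assume $U_0<U_1<U_2$ with all three in $\mathcal F_K(V,W)$ (the mixed cases involving $V$ or $W$ being trivial by fiat). The relation $U_0<U_1$ gives $d_{U_1}(U_0,V)\le\kappa$, and $U_1<U_2$ gives $d_{U_1}(U_2,W)\le\kappa$; since $d_{U_1}(V,W)>K$, (\ref{ProjectionTriangleInequa}) forces $d_{U_1}(U_0,U_2)>K-2\kappa>\kappa$, so axiom (2) yields $d_{U_0}(U_1,U_2)\le\kappa$. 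Combined with $d_{U_0}(U_1,W)\le\kappa$ (a reformulation of $U_0<U_1$) this gives $d_{U_0}(U_2,W)\le 2\kappa$, hence $d_{U_0}(V,U_2)\ge d_{U_0}(V,W)-d_{U_0}(U_2,W)>K-2\kappa>\kappa$, i.e. $U_0<U_2$.

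Finally I would read off the quantitative estimates. For $U_0<U_1<U_2$, the relations $U_0<U_1$ and $U_1<U_2$ give $d_{U_1}(U_0,V)\le\kappa$ and $d_{U_1}(U_2,W)\le\kappa$, so (\ref{ProjectionTriangleInequa}) sandwiches $d_{U_1}(U_0,U_2)$ within $2\kappa$ of $d_{U_1}(V,W)$; choosing $D$ large and invoking the precise form of the modified distance as in \cite{BBF15} upgrades this to $d_{U_1}(V,W)-D\le d_{U_1}(U_0,U_2)\le d_{U_1}(V,W)$. For the last two inequalities, transitivity gives $U_0<U_1$ and $U_0<U_2$, so $d_{U_0}(U_1,W)\le\kappa$ and $d_{U_0}(U_2,W)\le\kappa$, whence $d_{U_0}(U_1,U_2)\le 2\kappa\le D$; the bound $d_{U_2}(U_0,U_1)\le D$ is symmetric. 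The step I expect to be the real obstacle is the dichotomy: its uniformity is what forces $K$ to be large compared to $\kappa$, and carrying the additive errors of the modified, symmetrized $d_U$ through all the chained applications of axiom (2) requires care. Once the dichotomy and its reformulations are in hand, transitivity and the estimates are routine applications of axiom (2) and the approximate triangle inequality.
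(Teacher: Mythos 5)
The paper offers no proof of this lemma---it is quoted verbatim from Bestvina--Bromberg--Fujiwara---and your sketch correctly reconstructs the standard argument from that source: the dichotomy for pairs in $\mathcal F_K(V,W)$ via axiom (2) and the coarse triangle inequality, the resulting total order, transitivity, and the quantitative estimates, with the additive errors from the symmetrized $d_U$ absorbed into $D$. Your identification of the dichotomy (and the uniform choice of $K\gg\kappa$ needed to run it) as the crux is accurate, so this matches the cited proof in approach and substance.
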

We now give the definition of a projection complex.
\begin{definition}\label{Def: ProjectionCpx}
    The projection complex $\mathcal P_K(\mathcal F)$ for $K$ satisfying Lemma \ref{TotalOrder} is a graph with the vertex set consisting of the elements in $\mathcal F$. Two vertices $U$ and $V$ are connected if $\mathcal F_K(U, V ) = \emptyset$. We equip $\mathcal P_K(\mathcal F)$ with a length metric $d_{\mathcal P}$ induced by assigning unit length to each edge.
\end{definition}
The projection complex $\mathcal P_K(\mathcal F)$ is connected\revise{, since} by \cite[Proposition 3.7]{BBF15}, the interval set $\mathcal F_K[U, V ]$ gives a connected path between $U$ and $V$ in $\mathcal P_K(\mathcal F)$: the consecutive elements directed by the total order are adjacent in $\mathcal P_K(\mathcal F)$. \revise{The path $\F_K[U,V]\cup \{U,V\}=\{U< U_1< \cdots < U_k< V\}$ is called the \textit{standard path} from $U$ to $V$. See Figure \ref{Fig: LiftStanPath} for an illustration of a standard path.} By \cite[Corollary 3.7]{BBFS19}, standard paths are (2, 1)-quasi-geodesics \revise{in $\P_K(\F)$}. \revise{A \textit{quasi-tree} is a geodesic metric space quasi-isometric to a tree.} The structural result about the projection complex is the following.
\begin{lemma}\cite[Corollary 3.25]{BBF15}\label{Lem: PCQuasi-tree}
    For $K \gg 0$ as in Lemma \ref{TotalOrder}, the projection complex $\mathcal P_K(\mathcal F)$ is a quasi-tree, on which $G$ acts non-elementarily and co-boundedly.
\end{lemma}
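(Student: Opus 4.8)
The plan is to verify \emph{Manning's bottleneck criterion} for quasi-trees, taking the standard paths as coarse geodesics. By \cite[Corollary 3.7]{BBFS19} every standard path $\mathcal F_K[U,V]$ (for $U,V\in\mathcal F$) is a $(2,1)$-quasi-geodesic with uniform constants, so it is enough to establish the bottleneck property along standard paths: there should be a constant $\Delta=\Delta(\kappa)$ such that every vertex $W$ lying on $\mathcal F_K[U,V]$ is a coarse cut point between $U$ and $V$, meaning that every edge path from $U$ to $V$ in $\mathcal P_K(\mathcal F)$ enters the ball $B_{\mathcal P}(W,\Delta)$. Such a vertex $W$ satisfies $W\in\mathcal F_K(U,V)$, hence $d_W(U,V)>K$, by Lemma~\ref{TotalOrder}. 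Granting this, Manning's criterion gives that $\mathcal P_K(\mathcal F)$ is quasi-isometric to a tree.

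To prove the coarse-cut-point claim I would argue by contradiction. Suppose $\gamma=(U=Y_0,Y_1,\dots,Y_m=V)$ is an edge path with $d_{\mathcal P}(Y_j,W)\ge 3$ for every $j$; then $W\notin\{Y_j\}$ and $W$ is adjacent to no $Y_j$. Since consecutive vertices $Y_j,Y_{j+1}$ are adjacent we have $\mathcal F_K(Y_j,Y_{j+1})=\varnothing$, and therefore $d_W(Y_j,Y_{j+1})\le K$ for every $j$. The crux is a monotonicity estimate: the projection sets $\pi_W(Y_0),\dots,\pi_W(Y_m)\subset W$ cannot drift a total distance larger than a constant $N=N(\kappa)$. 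To see this one decomposes $\mathcal F_K[U,V]$ as $\mathcal F_K[U,W]\cup\mathcal F_K[W,V]$ and, using the total order on these intervals from Lemma~\ref{TotalOrder} together with the Behrstock-type inequality in projection axiom~(2), shows that once $\pi_W(Y_j)$ has moved past a $\kappa$-controlled threshold it cannot come back; hence $d_W(U,V)=d_W(Y_0,Y_m)\le N(\kappa)$. Taking $K>N(\kappa)$ contradicts $d_W(U,V)>K$. This estimate is exactly where the largeness of $K$ in Lemma~\ref{TotalOrder} is used, and I expect it to be the main obstacle; it is the content of \cite[\S3]{BBF15}.

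It remains to analyse the $G$-action. Closest-point projection is $G$-equivariant, so the left $G$-action on $\mathcal F$ preserves the projection maps and induces an action on $\mathcal P_K(\mathcal F)$ by graph isometries. As $\mathcal F=G\cdot\ax(g)$ is a single $G$-orbit, $G$ acts transitively on the vertex set, so the action is co-bounded. For non-elementarity, recall a quasi-tree is Gromov-hyperbolic, so it suffices to exhibit two loxodromic isometries with disjoint pairs of fixed points in $\partial\mathcal P_K(\mathcal F)$. Fix $f\in G\setminus E(g)$ (such $f$ exists since $G$ is non-elementary) and set $\theta:=g^Nf$ for a parameter $N$. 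Using $g^N\ax(g)=\ax(g)$ and equivariance of $\pi$, one computes
\[
d_{\ax(g)}\bigl(\theta\,\ax(g),\ \theta^{-1}\ax(g)\bigr)\;\ge\; d(g^No,o)-C_f ,
\]
with $C_f$ depending only on $f$ and $\kappa$. Since $g$ is contracting, $d(g^No,o)\to\infty$, so for $N$ large $\ax(g)\in\mathcal F_K(\theta^{-1}\ax(g),\theta\,\ax(g))$, i.e.\ $\ax(g)$ lies strictly between $\theta^{-1}\ax(g)$ and $\theta\,\ax(g)$ on a standard path. Translating by powers of $\theta$ shows that each vertex of the bi-infinite path $\{\theta^{\,n}\ax(g):n\in\mathbb{Z}\}$ lies on the standard path between its two neighbours; by the standard-path analysis of \cite{BBF15} this path is a uniform quasi-geodesic, so $\theta$ is loxodromic on $\mathcal P_K(\mathcal F)$. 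Repeating with $f'\in G\setminus E(g)$ chosen — using the existence of infinitely many pairwise weakly-independent contracting elements in $G$ (\cite[Lemma 2.30]{WXY24}), together with the Extension Lemma~\ref{Lem: ExtensionLemma} if necessary — so that $g^{N'}f'$ is not commensurable to $\theta$, one gets a second loxodromic with distinct endpoints in $\partial\mathcal P_K(\mathcal F)$, and the action is non-elementary. This last step is routine given the structure theory recalled above.
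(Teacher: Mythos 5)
The paper offers no proof of this lemma at all --- it is imported verbatim as \cite[Corollary 3.25]{BBF15} --- so the only meaningful comparison is between your sketch and the cited source, and your sketch does follow the BBF strategy: verify Manning's bottleneck criterion by showing that every $W\in\mathcal F_K(U,V)$ is a coarse cut point, using that adjacency of $Y_j,Y_{j+1}$ forces $d_W(Y_j,Y_{j+1})\le\kappa$ (up to the symmetrization error), and then a monotonicity argument to keep the total drift of $\pi_W(Y_j)$ bounded by a constant rather than by $mK$. That monotonicity estimate is exactly the technical heart of \cite[\S 3]{BBF15}, and you correctly locate it but do not prove it; since the paper itself delegates the entire lemma to that reference, this is acceptable, but be aware your ``proof'' is really an annotated citation at that point. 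Two further remarks. First, your loxodromic $\theta=g^Nf$ is a sound construction (note that $g$ itself is \emph{elliptic} on $\mathcal P_K(\mathcal F)$ since it stabilizes the vertex $\ax(g)$, so some such trick is genuinely needed), and the computation $d_{\ax(g)}(\theta\ax(g),\theta^{-1}\ax(g))\ge d(g^No,o)-C_f$ is correct because $\pi_{\ax(g)}(\theta\ax(g))=g^N\pi_{\ax(g)}(f\ax(g))$ while $\pi_{\ax(g)}(\theta^{-1}\ax(g))=\pi_{\ax(g)}(f^{-1}\ax(g))$ stays put; but promoting the local condition ``each $\theta^n\ax(g)$ separates its neighbours'' to ``the orbit is a quasi-geodesic'' needs the local-to-global statement for the total order (an induction with axiom (2) showing $d_{\theta^n\ax(g)}(\ax(g),\theta^m\ax(g))>K$ for $0<n<m$), which you should state rather than wave at. Second, the final step is the one genuine gap: ``not commensurable'' does not by itself give disjoint fixed-point pairs on $\partial\mathcal P_K(\mathcal F)$ unless you have already established WPD or acylindricity (which in this paper only appears later, via \cite[Theorem 5.6]{BBFS19}); a cleaner route is to observe that if all $G$-conjugates of $\theta$ shared the same endpoint pair, that pair would be $G$-invariant, contradicting coboundedness together with the fact that the orbit of the vertex $\ax(g)$ is infinite and not contained in a bounded neighbourhood of a single quasi-axis.
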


\begin{figure}[ht]
  \centering
  \includegraphics[width=10cm]{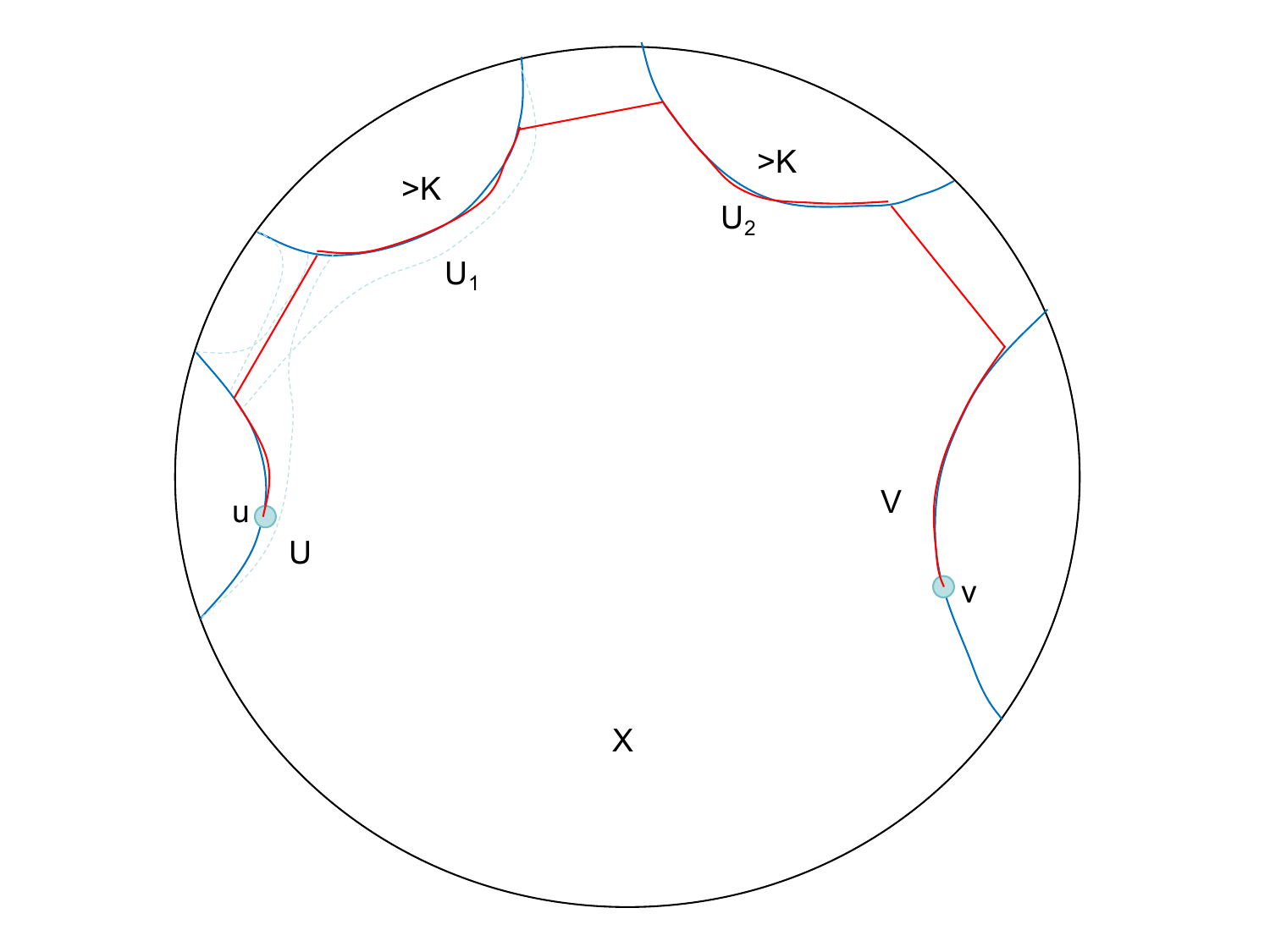}\\
  \caption{Each blue line is a translate of $\ax(g)$; $\F_K[U,V]\cup \{U,V\}=\{U<U_1<U_2<V\}$ is a standard path in $\P_K(\F)$; the dashed line represents the closest point projection between $U$ and $U_1$; the red line is a lifted standard path from $u\in U$ to $v\in V$ in $X$}\label{Fig: LiftStanPath}
\end{figure}

For any two points $u \in U$ and $v \in V$, we often need to \textit{lift} a standard path \revise{$\F_K[U,V]\cup\{U,V\}=\{U< U_1< \cdots < U_k< V\}$ in $\P_K(\F)$} to \revise{a path from $u$ to $v$ in} $X$. \revise{The \textit{lifted standard path} in $X$} is a piecewise geodesic path (admissible path) \revise{from $u$ to $v$} as concatenation of the normal paths between two consecutive vertices \revise{which are $G$-translates of $\ax(g)$ in $X$} and geodesics contained in vertices. \revise{See Figure \ref{Fig: LiftStanPath} for an illustration of a lifted standard path.} This is explained by the following lemma proved in \cite[Lemma 4.5]{HLY20}.
\begin{lemma}\label{Lem: AdmissiblePath}
    For any $K > 0$, there exist a constant $D = D(K, \kappa) \ge 0$ with $D \to \infty$ as $K\to \infty$ and a uniform constant $B = B(\kappa) > 0$ with the following property. For any two points $u \in U, v \in V$ there exists an $(D, B)$-admissible path $\gamma$ in $X$ from $u$ to $v$ with saturation $\mathcal F_K[U, V ]$.
\end{lemma}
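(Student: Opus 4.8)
The plan is to obtain $\gamma$ by lifting the standard path of $\mathcal{P}_K(\mathcal{F})$ from $U$ to $V$ to a concatenation of geodesic segments in $X$, and then to check the three admissibility axioms of Definition~\ref{DEF: Admissible Path} directly from the total-order estimates of Lemma~\ref{TotalOrder}. Write the standard path as $\mathcal{F}_K[U,V]=\{U=U_0<U_1<\cdots<U_k<U_{k+1}=V\}$ with the order supplied by Lemma~\ref{TotalOrder}. Since, by Lemma~\ref{Lem: one element}, every $\pi_{U_i}$ is a genuine closest-point projection onto the closed set $U_i$, I may choose points $a_i\in\pi_{U_i}(U_{i+1})$ for $0\le i\le k$ and $b_i\in\pi_{U_i}(U_{i-1})$ for $1\le i\le k+1$, and set $p_0:=[u,a_0]$, $p_i:=[b_i,a_i]$ for $1\le i\le k$, $p_{k+1}:=[b_{k+1},v]$, and $q_i:=[a_{i-1},b_i]$ for $1\le i\le k+1$. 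Then $\gamma:=p_0q_1p_1\cdots q_{k+1}p_{k+1}$ is a piecewise geodesic from $u$ to $v$, and I declare its contracting subsets to be $\{U_0,\dots,U_{k+1}\}=\mathcal{F}_K[U,V]$ (if $U=V$ one simply takes $\gamma:=[u,v]$). The endpoint part of (LL1) is immediate since both endpoints of $p_i$ lie in $U_i\in\mathcal{F}$.

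The substantive point is the length part of (LL1), which requires $|p_i|>D$ for $1\le i\le k$; this is the only place where the constant $D=D(K,\kappa)$ enters. For such $i$ one has $U_{i-1}<U_i<U_{i+1}$ in $\mathcal{F}_K[U,V]$, so Lemma~\ref{TotalOrder} gives $d_{U_i}(U_{i-1},U_{i+1})\ge d_{U_i}(U,V)-D_0$ with $D_0=D_0(\kappa)$ the constant of that lemma, while $U_i\in\mathcal{F}_K(U,V)$ gives $d_{U_i}(U,V)>K$. Since $b_i\in\pi_{U_i}(U_{i-1})$ and $a_i\in\pi_{U_i}(U_{i+1})$, and each of $\pi_{U_i}(U_{i-1}),\pi_{U_i}(U_{i+1})$ has diameter $\le\kappa$ by axiom (1) of Definition~\ref{Def: ProjectionAxioms}, we get $d_{U_i}(U_{i-1},U_{i+1})=\diam(\pi_{U_i}(U_{i-1})\cup\pi_{U_i}(U_{i+1}))\le d(b_i,a_i)+2\kappa$, hence $|p_i|=d(b_i,a_i)>K-D_0-2\kappa=:D$, which is $\ge 0$ once $K$ is as large as in Lemma~\ref{TotalOrder} and tends to $\infty$ as $K\to\infty$.

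Axiom (BP) will follow with $B:=\kappa$ (or $3\kappa$, after passing to the symmetrized $d_U$ used to build $\mathcal{P}_K(\mathcal{F})$): for each $i$, the points whose $U_i$-projections occur in (BP) are $a_i$ and $b_i$ (which lie in $U_i$, hence project to themselves) together with $a_{i-1}\in U_{i-1}$ or $b_{i+1}\in U_{i+1}$ (which project into $\pi_{U_i}(U_{i-1})$, respectively $\pi_{U_i}(U_{i+1})$); since $b_i\in\pi_{U_i}(U_{i-1})$ and $a_i\in\pi_{U_i}(U_{i+1})$ as well, each set occurring in (BP) lies in a $\kappa$-bounded set, and the boundary conventions $(p_{-1})_+=u$, $(p_{k+2})_-=v$ give diameter $0$. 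Axiom (LL2) holds with its first alternative because $\mathcal{F}=\{f\ax(g):f\in G\}$ is a contracting system whose members pairwise have $\mathcal{R}$-bounded intersection for a uniform gauge $\mathcal{R}=\mathcal{R}(\kappa)$, and consecutive vertices $U_i\ne U_{i+1}$ of a standard path are distinct, so no length bound on the $q_i$ is needed. Assembling these three verifications shows that $\gamma$ is a $(D,B)$-admissible path whose contracting subsets, hence saturation, are $\mathcal{F}_K[U,V]$. I expect the only real work to be the bookkeeping in the second paragraph, turning the abstract projection quantities $d_{U_i}(\cdot,\cdot)$ into honest distances along $\gamma$ and carrying the additive $\kappa$-errors (including the one incurred by symmetrizing $d_U$); the total order, the existence of closest-point projections, and the uniform bounded-intersection property are all already in hand from the earlier lemmas.
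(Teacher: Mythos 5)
Your construction is exactly the lifted standard path that the paper describes in the paragraph surrounding this lemma, and your verification of (LL1), (BP), and (LL2) via Lemma \ref{TotalOrder}, axiom (1) of Definition \ref{Def: ProjectionAxioms}, and the bounded intersection of $\mathcal F$ is sound, with $D=K-D_0-2\kappa$ and $B$ a multiple of $\kappa$ as required. The paper itself does not reprove this statement but cites \cite[Lemma 4.5]{HLY20}; your argument supplies the same proof that reference contains, so there is nothing to correct.
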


In practice, we always assume that $K$ \revise{is sufficiently large such that by taking $\tau=B$ and $\mathcal R$ as the bounded intersection gauge of $\mathcal F$, the constant $D$ in the above lemma} satisfies Proposition \ref{PROP: Fellow Travel}, \revise{and then} the path $\gamma$ shall be a quasi-geodesic.


\subsection{(Relative) bounded cohomology}\label{Subsec: RelBoundedCohomology}

This subsection is devoted to the basic definitions and results on the bounded cohomology of a group and the relative bounded cohomology of a pair of groups. We refer the reader to \cite{Bro94} for full details on ordinary group cohomology theory, to \cite{Fri17, Gro82, Iva85}  on bounded cohomology of  groups, and to \cite{Gro82, Par03} for the relative case.

Let $G$ be a group. \revise{For a coefficient ring $R$, the \textit{bar complex} $C_*(G; R)$} is the complex generated in dimension $n$ by $n$-tuples $(g_1, \ldots, g_n)$ with $g_i \in G$ and with boundary map $\partial$ defined by the formula $$\partial (g_1, \ldots, g_n) = (g_2, \ldots, g_n)+\sum_{i=1}^{n-1}(-1)^i(g_1, \ldots , g_ig_{i+1}, \ldots, g_n)+(-1)^n(g_1, \ldots, g_{n-1}).$$
We let $C^*(G; R)$ denote the dual cochain complex $\mathrm{Hom}(C_*(G), R)$, and let $d$ denote the adjoint of $\partial$. The homology groups of $C^*(G; R)$ are called the \textit{group cohomology of $G$ with coefficients in $R$}, and are denoted $H^*(G; R)$.

In this paper, we take $R\revise{=}\mathbb R$. A cochain $\alpha \in C^n(G; \mathbb R)$ is \textit{bounded} if $$\sup |\alpha(g_1, \ldots , g_n)| < \infty$$ where the supremum is taken over all \revise{$n$-tuples $(g_1,\ldots,g_n)$ with $g_i\in G$}. The set of all bounded cochains forms a subcomplex $C^*_b(G;\mathbb R)$ of $C^*(G;\mathbb R)$, and its homology is the so-called \textit{bounded cohomology} $H_b^*(G;\mathbb R)$.

\paragraph{\textbf{Amenable groups}}

Recall that a \textit{mean} on $G$ is a linear functional on $C^1_b(G;\mathbb R)$ which maps the constant function $\phi(g) = 1$ to 1, and maps non-negative functions to non-negative numbers.

\begin{definition}\label{Def: Amenable}
    A group $G$ is \textit{amenable} if there is a $G$-invariant mean $\pi : C^1_b(G;\mathbb R) \to \mathbb R$ where $G$ acts on $C^1_b(G;\mathbb R)$ by $$g\cdot \phi(h)=\phi(g^{-1}h)$$ for all $g, h \in G$ and $\phi \in C^1_b(G;\mathbb R)$.
\end{definition}

Examples of amenable groups are finite groups, solvable groups, and Grigorchuk's groups of intermediate growth.

We list three important facts here for future use. Some of them are mentioned in the Introduction. See \cite[\textsection 2.4.2]{Cal09} or \cite{Bou95} for details.
\begin{lemma}\label{Lem: Amenable}
    \begin{enumerate}
        \item $H^{1}_b(G; \mathbb{R})=0$ for any group $G$.
        \item $H_b^*(G;\mathbb R)$ vanishes identically when $G$ is an amenable group.
        \item Let $H\to G\to K\to 1$ be a (right) exact sequence of groups. Then the induced sequence on second bounded cohomology is (left) exact: $$0\to H^2_b(K; \mathbb R)\to H^2_b(G; \mathbb R)\to H^2_b(H; \mathbb R).$$
    \end{enumerate}
\end{lemma}

Unless otherwise stated, \textit{a pair of groups} $(G,H)$ in this paper means that $G$ is a group and $H$ is a subgroup of $G$. Now we come to the definition of relative bounded cohomology of the pair $(G, H)$. The kernel of the obvious restriction map $C^{*}_b(G,\mathbb{R}) \rightarrow C^{*}_b(H, \mathbb{R})$ is denoted by $C^{*}_b(G, H; \mathbb{R})$, and we have the short exact sequence of complexes 

\begin{center}
$0 \rightarrow C^{*}_b(G, H; \mathbb{R}) \rightarrow C^{*}_b(G; \mathbb{R}) \rightarrow C^{*}_b(H; \mathbb{R}) \rightarrow 0$
\end{center}

\noindent which induces a long exact sequence in cohomology

\begin{equation}\label{Equ: LongExactSeq}
\cdots \rightarrow H^{n-1}_b(H; \mathbb{R}) \rightarrow H^n(C^{*}_b(G,H; \mathbb{R})) \rightarrow H^{n}_b(G; \mathbb{R}) \rightarrow H^{n}_b(H; \mathbb{R}) \rightarrow \cdots .
\end{equation}

The module $H^n(C^{*}_b(G,H; \mathbb{R}))$ is the $n$-th bounded cohomology of the pair $(G, H)$. We denote it as $H^n_b(G,H; \mathbb{R})$. \revise{Let $\{H_i\}_{i=1}^m$ be a finite collection of subgroups in $G$. We denote by $C^{\ast}_b(G,\{H_i\}_{i=1}^m; \R)$ the kernel of the multiple restriction map $C^{*}_b(G,\mathbb{R}) \rightarrow \bigoplus_{i=1}^mC^{*}_b(H_i, \mathbb{R})$. In the same way as above, we define $H^n_b(G,\{H_i\}_{i=1}^m; \mathbb{R})$ as the $n$-th bounded cohomology $H^n(C^{*}_b(G,\{H_i\}_{i=1}^m; \mathbb{R}))$.} There are no substantial differences from the relative cohomology in ordinary cohomology group theory.

\revise{
\begin{proposition}\label{Prop: NormalSubgp}
    Let $(G,H)$ be a pair of groups with $H\lhd G$. Then $H_b^2(G/H; \R)\cong H^2_b(G, H; \mathbb R)$.
\end{proposition}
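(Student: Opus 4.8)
The plan is to identify $H^2_b(G,H;\R)$ with $H^2_b(G/H;\R)$ via the long exact sequence (\ref{Equ: LongExactSeq}) together with the inflation map. First I would consider the quotient homomorphism $q\colon G\to G/H$, which induces a cochain map $q^*\colon C^*_b(G/H;\R)\to C^*_b(G;\R)$. Since every tuple in the image of $q^*$ comes from a cochain on $G/H$, the composite $C^*_b(G/H;\R)\xrightarrow{q^*} C^*_b(G;\R)\to C^*_b(H;\R)$ is the map induced by the trivial homomorphism $H\to G/H$, hence factors through the augmentation; in degrees $\ge 1$ this restriction is zero on the nose (a tuple $(h_1,\dots,h_n)$ with $h_i\in H$ maps to $(1,\dots,1)$, on which an inhomogeneous bar cochain coming from $G/H$ need not vanish — so one must instead argue at the level of cohomology, or pass to the homogeneous resolution where it is immediate). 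Consequently $q^*$ lands in $C^*_b(G,H;\R)$, giving a map $\iota^*\colon H^2_b(G/H;\R)\to H^2_b(G,H;\R)$.

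Next I would show $\iota^*$ is an isomorphism in degree $2$ by a diagram chase on (\ref{Equ: LongExactSeq}). The relevant segment is
\begin{equation*}
H^1_b(H;\R)\to H^2_b(G,H;\R)\to H^2_b(G;\R)\xrightarrow{\mathrm{res}} H^2_b(H;\R).
\end{equation*}
By Lemma \ref{Lem: Amenable}(1), $H^1_b(H;\R)=0$, so $H^2_b(G,H;\R)\hookrightarrow H^2_b(G;\R)$ injects onto $\ker(\mathrm{res})$. On the other hand, Lemma \ref{Lem: Amenable}(3) applied to the exact sequence $H\to G\to G/H\to 1$ gives that $q^*\colon H^2_b(G/H;\R)\to H^2_b(G;\R)$ is injective with image exactly $\ker(\mathrm{res})$. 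Thus both $H^2_b(G,H;\R)$ and $H^2_b(G/H;\R)$ are identified, as subspaces of $H^2_b(G;\R)$, with the same kernel $\ker(\mathrm{res})$; and the map $\iota^*$ is compatible with these identifications since it is induced by $q^*$. Therefore $\iota^*$ is an isomorphism.

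The main obstacle, as flagged above, is making the first step clean: the inhomogeneous bar complex does not make $q^*$ visibly land in the relative subcomplex $C^*_b(G,H;\R)$ (defined as the kernel of the restriction cochain map), because a cochain pulled back from $G/H$ evaluated on an $H$-tuple gives its value on the trivial tuple, which is generally nonzero. The remedy is to work with the homogeneous (equivariant) bar resolution, where a cochain pulled back along $q$ restricted to $H$ is the pullback along the constant map and hence a genuine coboundary / zero cocycle at the chain level after the standard normalization — or, more simply, to note that we only ever need the statement at the level of cohomology, so it suffices to know that $q^*$ on $H^2_b$ has image $\ker(\mathrm{res})$ (Lemma \ref{Lem: Amenable}(3)) and that $H^2_b(G,H;\R)\to H^2_b(G;\R)$ is injective with the same image (Lemma \ref{Lem: Amenable}(1) and exactness of (\ref{Equ: LongExactSeq})). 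I would therefore phrase the proof entirely in terms of these two facts about (\ref{Equ: LongExactSeq}) and the five/short exact sequence lemma, avoiding the chain-level subtlety altogether: the two short/long exact sequences exhibit $H^2_b(G,H;\R)$ and $H^2_b(G/H;\R)$ as canonically isomorphic to $\ker\big(H^2_b(G;\R)\to H^2_b(H;\R)\big)$, and the isomorphism between them is the one induced by inflation, which concludes the proof.
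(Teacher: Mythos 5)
Your proof is correct and follows essentially the same route as the paper: both identify $H^2_b(G,H;\R)$ and $H^2_b(G/H;\R)$ with $\ker\bigl(H^2_b(G;\R)\to H^2_b(H;\R)\bigr)$ using Lemma \ref{Lem: Amenable}(1), (3) and the exact sequence (\ref{Equ: LongExactSeq}). Your extra discussion of whether inflation lands in the relative subcomplex at the chain level is a reasonable concern, but you correctly resolve it by working purely at the level of cohomology, which is exactly what the paper does.
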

\begin{proof}
    Recall that Lemma \ref{Lem: Amenable} (3) gives us a left-exact sequence $$0\to H^2_b(G/H; \mathbb R)\to H^2_b(G; \mathbb R)\to H^2_b(H; \mathbb R).$$

    From the definition of relative bounded cohomology (\ref{Equ: LongExactSeq}), we also have the following exact sequence $$H^{1}_b(H; \mathbb{R}) \rightarrow H_b^2(G,H; \mathbb{R}) \rightarrow H^{2}_b(G; \mathbb{R}) \rightarrow H^{2}_b(H; \mathbb{R}).$$ As any group has a trivial first bounded cohomology, we can conclude that
    \begin{equation}\label{equ: Isom}
        H^2_b(G/H; \mathbb R)\cong \ker(H^2_b(G; \mathbb R)\to H^2_b(H; \mathbb R))\cong H^2_b(G,H; \mathbb R).
    \end{equation}
\end{proof}

We remark that the second isomorphism in (\ref{equ: Isom}) always holds as long as $(G,H)$ is a pair of groups.
}
\paragraph{\textbf{Finite-index subgroups}}

Let $(G,H)$ be a pair of groups. The inclusion map from $H$ to $G$ induces the \textit{restriction map} which is denoted by $res:H^n(G, \mathbb R)  \rightarrow H^n(H, \mathbb R)$.  A subgroup $H$ of a topological group $G$ is called \textit{cocompact} (or uniform) if the quotient space $G/\overline H$ is compact, where $\overline H$ denotes the closure of $H$ in $G$. For cocompact subgroups such that the quotient admits a finite invariant measure, the restriction map from the cohomology group of \revise {the ambient group} to the cohomology group of the \revise{subgroup} is injective. This notably encompasses the case of uniform lattices (of Lie groups) and finite index subgroups (of any group). The standard argument uses the existence of a \textit{transfer map} which is the left inverse to the restriction map (or up to a constant multiple depending on different definitions of the transfer map). The transfer map is obtained by integration (or finite sum for finite index subgroups), so it is crucial for the subgroup to be cocompact. See \cite[\textsection 8.6]{Mon01} for a detailed discussion of the transfer map.

\begin{lemma}\cite[Propositon 8.6.2]{Mon01}\label{FiniteIndex1}
Let $(G, H)$ be a pair of groups. If $H$ has finite index in $G$, then the natural restriction map $H^{2}_b(G; \mathbb{R}) \rightarrow H^{2}_b(H; \mathbb{R})$ is isometrically injective.
\end{lemma}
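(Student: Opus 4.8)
The statement to prove is Lemma~\ref{FiniteIndex1}: if $H$ has finite index in $G$, then the restriction map $H^2_b(G;\mathbb{R})\to H^2_b(H;\mathbb{R})$ is isometrically injective. The plan is to exhibit an explicit \emph{transfer} (averaging) map $\mathrm{tr}:H^*_b(H;\mathbb{R})\to H^*_b(G;\mathbb{R})$ and show it is a left inverse to restriction, up to the multiplicative constant $[G:H]$; since both maps are norm non-increasing, a left inverse of an operator of norm $\le 1$ between normed spaces forces that operator to be isometric onto its image, hence injective.

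First I would fix a set $T=\{t_1,\dots,t_m\}$ of left coset representatives for $H$ in $G$, where $m=[G:H]<\infty$, so that $G=\bigsqcup_{j} t_j H$. For each $g\in G$ and each $j$ there is a unique index $\sigma_g(j)$ and element $h_j(g)\in H$ with $g\,t_j = t_{\sigma_g(j)}\,h_j(g)$; this is the standard cocycle data of the coset action. Given a bounded $H$-cochain (working at the level of the homogeneous/bar resolution, or directly on the bar complex $C^*_b(H;\mathbb{R})$ after the usual identification), one defines the transfer of a bounded cochain $\alpha$ on $H$ by averaging the pulled-back cochain over the coset representatives: on an $n$-tuple $(g_1,\dots,g_n)$ one sets $(\mathrm{tr}\,\alpha)(g_1,\dots,g_n)=\frac{1}{m}\sum_{j}\alpha\big(h\text{-part of the }t_j\text{-twisted tuple}\big)$, the bookkeeping being exactly the one encoded by $\sigma_g$ and $h_j(g)$. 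One then checks three things: (i) $\mathrm{tr}$ maps bounded cochains to bounded cochains with $\|\mathrm{tr}\,\alpha\|_\infty\le\|\alpha\|_\infty$ — immediate, since an average of values of $\alpha$ cannot exceed the sup norm of $\alpha$; (ii) $\mathrm{tr}$ is a chain map, i.e. commutes with the coboundary $d$ — a direct (if slightly tedious) verification using the cocycle identity for $\sigma$ and $h_j$; hence it descends to $H^*_b(H;\mathbb{R})\to H^*_b(G;\mathbb{R})$; and (iii) $\mathrm{tr}\circ\mathrm{res}=\mathrm{id}$ on $H^*_b(G;\mathbb{R})$, because when $\alpha=\mathrm{res}\,\beta$ comes from a $G$-cochain the $t_j$-twisting can be absorbed by the $G$-invariance (equivariance) of $\beta$ and the average of $m$ equal terms is that term. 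All of this is classical; rather than reproving it I would simply invoke the cited reference \cite[\S8.6]{Mon01} for the construction and its properties and only record the two estimates I actually use.

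With the transfer in hand the conclusion is formal. The restriction map $\mathrm{res}:H^2_b(G;\mathbb{R})\to H^2_b(H;\mathbb{R})$ is norm non-increasing (restriction of cochains never increases the sup norm), and $\mathrm{tr}$ is norm non-increasing by (i). From $\mathrm{tr}\circ\mathrm{res}=\mathrm{id}$ we get, for every class $\beta\in H^2_b(G;\mathbb{R})$,
\[
\|\beta\| \;=\; \|\mathrm{tr}(\mathrm{res}\,\beta)\| \;\le\; \|\mathrm{res}\,\beta\| \;\le\; \|\beta\|,
\]
so $\|\mathrm{res}\,\beta\|=\|\beta\|$; in particular $\mathrm{res}\,\beta=0$ forces $\beta=0$. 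Thus $\mathrm{res}$ is isometrically injective, as claimed. (If one prefers to define the transfer as an honest left inverse only up to the factor $[G:H]$, one simply inserts the normalization $\tfrac1m$ as above, or divides at the end; the argument is unchanged.)

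The only genuinely delicate point is item (ii), verifying that the averaged cochain is again a cocycle/that $\mathrm{tr}$ commutes with $d$ — this is where the coset-action cocycle identities $\sigma_{gg'}=\sigma_g\circ\sigma_{g'}$ and the matching compatibility of the $H$-parts $h_j$ must be used carefully, and it is the place where one must be sure that the $\mathbb{R}$-coefficient module is the trivial one (so that no further twisting enters). Since the excerpt already grants us \cite[Proposition~8.6.2]{Mon01}, I would not belabor this: the proof reduces to quoting that the transfer map exists, is $1$-Lipschitz on bounded cohomology, and satisfies $\mathrm{tr}\circ\mathrm{res}=\mathrm{id}$, and then running the three-term norm inequality above. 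Hence the lemma.
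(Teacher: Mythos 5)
Your proposal is correct and is exactly the standard transfer-map argument that the paper itself invokes: the paper gives no independent proof but cites \cite[Proposition 8.6.2]{Mon01}, and the paragraph preceding the lemma describes precisely your strategy (a transfer map obtained by a finite averaging over cosets, serving as a norm non-increasing left inverse to restriction). The three-term norm inequality you run at the end is the right way to extract isometric injectivity from those two properties, so nothing further is needed.
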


\revise{Combining with the remark following Proposition \ref{Prop: NormalSubgp}, we have immediately that}

\begin{corollary} \label{FiniteIndex2}
Let $(G, H)$ be a pair of groups. If $H$ has finite index in $G$, then $H^{2}_b(G, H; \mathbb{R})=0$.

\end{corollary}



The corollary above gives us a better understanding of the relations between the index of a subgroup and the relative bounded cohomology group. In fact, Corollary \ref{FiniteIndex2} is the main motivation for the authors of \cite{PR15} and us to consider the relative bounded cohomology of infinite index subgroups, aiming to find some non-trivial results. \revise{By combining Lemma \ref{Lem: Amenable} (2) and Proposition \ref{Prop: NormalSubgp}, one gets another simple example as follows.}

\begin{proposition}\label{Prop: AmenableQuotient}
    Let $(G,H)$ be a pair of groups. If $H$ is a normal subgroup with amenable quotient\revise{, then} $H^2_b(G,H;\mathbb R)=0$.
\end{proposition}

\subsection{Quasimorphisms}\label{subsec: QM}
In this subsection, we recall the notion of quasimorphism on a group $G$. \revise{Typically, one proves that $H^2_b(G;\R)$ is infinite-dimensional by demonstrating the existence of infinitely many linearly independent quasimorphisms on $G$.}

\begin{definition}
A map $\phi: G \rightarrow \mathbb{R}$ is called a \textit{quasimorphism} if there exists a constant $C>0$ such that
$$
|\phi(gh)-\phi(g)-\phi(h)|<C, ~~\forall g, h \in G
.$$
The \textit{defect} of a quasimorphism $\phi$ is defined to be:
\begin{center}
 $\Delta(\phi) :=\sup\limits_{g,  h \in G}|\phi(gh)-\phi(g)-\phi(h)|$.
\end{center}
The defect of a quasimorphism measures how far it is from a genuine homomorphism from the group to $\mathbb{R}$. A quasimorphism $\phi$ is called \textit{trivial} if there exists a bounded map $b: G \rightarrow \mathbb{R}$ and a group homomorphism $\rho :G \rightarrow \mathbb{R}$ such that $\phi=b+\rho$.
\end{definition}

We denote by $\rm{QM}(G)$ the $\mathbb{R}$-vector space of quasimorphisms on $G$ and by $\rm{QM}_0(G)$ the subspace of $\rm{QM}(G)$ consisting of all trivial quasimorphisms on $G$, which is exactly $C^1_b(G; \mathbb{R})\bigoplus \rm{Hom}(G, \mathbb{R})$.

For a quasimorphism $\phi$ on $G$, define the 1-coboundary of $\phi$  by $d^1 \phi(g, h)=\phi(g)+\phi(h)-\phi(gh)$. Thus, $d^1 \phi$ is a bounded 2-cocycle. We denote by $\omega_{\phi}:=[d^1 \phi]_b$ the corresponding bounded cohomology class. Then we have a linear map

\begin{center}
$\rm{QM}(G)\rightarrow H_b^2(G; \mathbb{R})$

$\phi\mapsto \omega_{\phi}$
\end{center}

\noindent such that the following sequence

\begin{center}
$\rm{QM}(G)\rightarrow H_b^2(G; \mathbb{R}) \rightarrow H^2(G; \mathbb{R})$

\end{center}
\noindent is exact \revise{\cite[Theorem 2.50]{Cal09}}.

There is another important special kind of quasimorphism called a \textit{homogeneous quasimorphism}.

\begin{definition}
A quasimorphism $\phi: G \rightarrow \mathbb{R}$ is homogeneous if $\phi(g^n)=n\cdot \phi(g)$ for every $g\in G$ and every $n\in \mathbb{Z}$.
\end{definition}

\revise{A \textit{class function} on $G$ is a function that takes the same value on each conjugacy class.
\begin{lemma}
    A homogeneous quasimorphism is a class function.
\end{lemma}
\begin{proof}
    Let $\phi$ be a homogeneous quasimorphism on $G$. For any two group elements $f,g\in G$, it follows from the definition of homogeneous quasimorphisms that
    \begin{align*}
        |\phi(g)-\phi(fgf^{-1})|& =|\phi(g)+\phi(fg^{-1}f^{-1})|=\frac{|\phi(g^n)+\phi(fg^{-n}f^{-1})|}{n}\\ &\le \frac{|\phi(g^n)+\phi(g^{-n})|+|\phi(f)|+|\phi(f^{-1})|+2\Delta(\phi)}{n}= \frac{2|\phi(f)|+2\Delta(\phi)}{n}.
    \end{align*}
    By letting $n\to \infty$, one has that $\phi(g)=\phi(fgf^{-1})$. Since $f,g$ are arbitrary, we complete the proof.
\end{proof}
}

\begin{remark}\cite[Lemma 2.21, Corollary 2.59]{Cal09}\label{Rmk: HomogeneousQG}
Let $\phi$ be a quasimorphism on $G$. Then there exists a unique homogeneous quasimorphism $\bar{\phi}$ which stays at finite distance from $\phi$. In fact, the corresponding homogeneous quasimorphism to $\phi$ is given by
$$\forall g\in G, \quad \bar{\phi}(g):=\lim \limits_{n\to\infty}\frac{\phi(g^n)}{n}.$$
\noindent This limit exists because the coarse subadditive inequality ${\phi(g^{n+m})}\le \phi(g^n)+\phi(g^m)+\Delta(\phi)$ holds. Moreover, the defect $\Delta(\bar \phi)$ is related to $\Delta(\phi)$ by $\Delta(\bar \phi)\le 2\Delta(\phi)$.
\end{remark}

We denote the subspace of $\rm{QM}(G)$ consisting of all homogeneous quasimorphisms on $G$ as $\rm{QM}_h(G)$.

\section{Morse subgroups of infinite index}\label{Sec: MorseSubgpofInfIndex}

In this section, we assume that $G$ is a non-elementary group acting properly on a geodesic metric space $(X,d)$ with contracting elements. Fix a base point $o\in X$. \revise{We first define what is a Morse subgroup.}

\revise{
\begin{definition}[Morse Property]\label{Def: MorseSubgp}
    A subset $A \subset X$ is \textit{$\eta$-Morse} for a function $\eta: \mathbb R\to \mathbb R$ if every $\lambda$-quasi-geodesic with endpoints in $A$ is contained in the $\eta(\lambda)$-neighborhood of $A$. The function $\eta$ is called a \textit{Morse gauge} of $A$. A subgroup $H \le G$ is \textit{Morse} if the subset $H\cdot o$ is $\eta$-Morse for some function $\eta: \mathbb R\to \mathbb R$.
\end{definition}

\begin{remark}\label{Rmk: FiniteHausDist}

        \rev{A $C$-contracting set is $\eta_0$-Morse for some function $\eta_0 : \mathbb R_{\ge 0} \to \mathbb R_{\ge 0}$ depending only on $C$ \cite[Lemma 2.8(1)]{Sis18}.}

\end{remark}}

\revise{From now on, we assume that $\{H_i\}_{1\le i\le n}$ is a finite collection of Morse subgroups with infinite index in $G$. The purpose of this section is to prove Proposition \ref{IntroProp: QT}, which is restated as
\begin{proposition}\label{Prop: Summary}
    There exists a quasi-tree on which $G$ acts co-boundedly and each $H_i$ acts elliptically for $1\le i\le n$.
\end{proposition}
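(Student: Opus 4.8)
The plan is to build the desired quasi-tree as a projection complex $\mathcal P_K(\mathcal F)$ attached to a single, carefully chosen contracting element $g$, following the framework of Lemma \ref{Lem: one element} and Lemma \ref{Lem: PCQuasi-tree}. The vertex set will be $\mathcal F = \{f\,\ax(g): f\in G\}$, so that $G$ acts co-boundedly on $\mathcal P_K(\mathcal F)$ automatically; the whole content is in arranging that each $H_i$ acts elliptically, i.e.\ that each $H_i$-orbit in $\mathcal P_K(\mathcal F)$ is bounded. Since the $\mathcal P$-distance from a vertex $U$ to $h U$ is controlled (up to the standard path machinery of Lemma \ref{TotalOrder} and the quasi-geodesic bound on standard paths) by the number of vertices $W\in\mathcal F$ with $d_W(U, hU) > K$, it suffices to produce $g$ such that for each $i$ and each $h\in H_i$ the projection $d_{\ax(g)}(o, ho)$ stays uniformly bounded, in fact bounded by something below the threshold $K$. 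Equivalently: every $H_i$ must project to $\ax(g)$ with uniformly bounded diameter.

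The first and main step is therefore to find a contracting element $g$ whose axis is ``transverse'' to all the $H_i\cdot o$ simultaneously. Here is where the infinite-index and Morse hypotheses enter. Because each $H_i$ has infinite index, $H_i\cdot o$ cannot coarsely cover any bi-infinite contracting quasi-geodesic direction that it does not already contain; and because each $H_i\cdot o$ is $\eta$-Morse, hence (by Remark \ref{Rmk: FiniteHausDist} in reverse form via \cite{Sis18}) sits in a contracting-type geometry, a contracting element $g_0$ can be perturbed via the Extension Lemma (Lemma \ref{Lem: ExtensionLemma}) to a contracting element $g = g_0 f$ whose axis has bounded projection to each $H_i\cdot o$. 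Concretely, I would start from any contracting element, note that the $H_i$ have limit sets that are proper closed subsets of the limit set of $G$ (this is the geometric meaning of ``Morse of infinite index''), pick endpoints for $\ax(g)$ outside all these limit sets, and then invoke the Extension Lemma together with the bounded-projection estimates of Han--Yang--Zou \cite{HYZ23} (the ``uniform short projection'' lemma alluded to in the sketch, Lemma \ref{Lem: UniformShortProjection}) to get that $\sup_{h\in H_i} d_{\ax(g)}(o,ho) < \infty$ for every $i$, with a bound independent of $h$.

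Granting such a $g$, the second step is bookkeeping. Fix $K$ large as in Lemma \ref{TotalOrder} and larger than $\sup_i \sup_{h\in H_i} d_{\ax(g)}(o, ho) + 3\kappa$ (absorbing the $2\kappa$ discrepancy between $d_U$ and its symmetrized version, as noted after Lemma \ref{Lem: one element}). For $h\in H_i$, consider the standard path from $U_0 := \ax(g)$ to $h U_0$. Any vertex $W = fU_0$ on it with $d_W(U_0, hU_0) > K$ would, by the total order and the triangle inequality \eqref{ProjectionTriangleInequa}, force the $H_i$-projection to $f^{-1}W = \ax(g)$ to exceed $K - D$ after translating by $f^{-1}$ — but projections are $H_i$-equivariant only up to the $G$-action, so more carefully: $d_W(U_0, hU_0)$ large means $d_{f^{-1}W}(f^{-1}U_0, f^{-1}hU_0) = d_{\ax(g)}(f^{-1}o, f^{-1}ho)$ large, and since $f^{-1}hf \cdot (f^{-1}\ax(g)\text{-data})$ need not lie in $H_i$ this argument needs the sharper input that the \emph{whole} standard path from $o$ to $ho$ in $X$ (lifted via Lemma \ref{Lem: AdmissiblePath}) fellow-travels a geodesic that stays in the Morse neighborhood of $H_i\cdot o$, so all intermediate vertices have bounded projection too. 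This forces $\mathcal F_K(U_0, hU_0) = \emptyset$, i.e.\ $d_{\mathcal P}(U_0, hU_0)\le 1$, uniformly in $h\in H_i$; hence $H_i$ fixes $U_0$ coarsely and acts elliptically. Finally $\mathcal P_K(\mathcal F)$ is a quasi-tree on which $G$ acts co-boundedly by Lemma \ref{Lem: PCQuasi-tree}, completing the proof.

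The hard part will be the first step — producing one contracting element whose axis is simultaneously transverse to finitely many infinite-index Morse subgroups with a \emph{uniform} projection bound. The pigeonhole-style argument of the Extension Lemma gives transversality to a single subset easily, but making the bound uniform over all $h\in H_i$ (not just a bound depending on $h$) is exactly the delicate estimate that requires the Morse gauge to be fixed and the closest-point-projection geometry of \cite{HYZ23}; handling $n$ subgroups at once is then a matter of intersecting finitely many ``good'' choices, which the three-element set $F$ of the Extension Lemma is designed to accommodate.
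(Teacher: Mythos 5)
Your overall architecture coincides with the paper's: take $\mathcal F=\{f\ax(g):f\in G\}$ for a well-chosen contracting element $g$, form the projection complex $\mathcal P_K(\mathcal F)$, get co-boundedness of the $G$-action from Lemma \ref{Lem: PCQuasi-tree}, and reduce ellipticity of each $H_i$ to a uniform projection bound combined with the lifted standard paths of Lemma \ref{Lem: AdmissiblePath}. But there is a genuine gap in the step you yourself flag as the hard one: you never actually produce the element $g$. You cite ``the bounded-projection estimates of Han--Yang--Zou'' and Lemma \ref{Lem: UniformShortProjection} as input, but that lemma \emph{is} the statement that has to be proven here, and the heuristic you offer in its place --- choose the endpoints of $\ax(g)$ outside the limit sets of the $H_i$ --- is not available in this setting (the paper uses no boundary theory in this argument and explicitly leaves the limit-set version as an open question), and in any case would not yield the \emph{uniform} quantitative bound $\sup_{h\in H_i}\diam(\pi_{b\ax(g)}([o,ho]))\le\tau$ over all $h$ and all translates $b\ax(g)$ that the ellipticity argument consumes. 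The actual mechanism is combinatorial: by B.~H.~Neumann's covering lemma (Lemma \ref{Lem: FiniteUnion}), infinite index forces $G\neq S(\bigcup_i H_i)S$ for the finite set $S=\{s\in G: d(o,so)\le\epsilon+\eta(1)\}$, so one may pick $g_0\notin S(\bigcup_i H_i)S$; the Morse property then shows that an $(\epsilon,g_0)$-barrier on $[o,ho]$ would put $g_0$ back into $SH_iS$, so every $h\in\bigcup_i H_i$ is $(\epsilon,g_0)$-barrier-free (Lemma \ref{Lem: g1g2}); and Lemma \ref{Lem: UniformEpsilon} together with Remark \ref{Rmk: UniformEpsilon} converts a projection of diameter greater than $\tau$ onto \emph{any} translate $b\ax(g_0f)$ into exactly such a barrier. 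This is where the hypotheses ``infinite index'' and ``Morse'' are actually spent, and none of it appears in your proposal.

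A second, smaller defect: your reduction initially asks only that $d_{\ax(g)}(o,ho)$ be bounded, i.e.\ the projection to the single axis $\ax(g)$. That is insufficient, since $\mathcal F_K(U,hU)=\emptyset$ requires $d_W(U,hU)\le K$ for \emph{every} vertex $W=f\ax(g)$; and your attempted repair --- that the lifted standard path fellow-travels a geodesic lying in the Morse neighborhood of $H_i\cdot o$, ``so all intermediate vertices have bounded projection too'' --- is a non sequitur on its own: lying in $N_{\eta(1)}(H_i\cdot o)$ does not by itself bound the projection of $[o,ho]$ to an arbitrary translate of $\ax(g)$. The barrier formalism is precisely the device that makes the bound hold uniformly over all translates simultaneously, which is why the paper states Lemma \ref{Lem: UniformShortProjection} for all $b\in G$ and then applies it in Lemma \ref{Lem: EllipticAction} to the arbitrary vertex $V\in\mathcal F_K[U,hU]$.
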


\noindent\textbf{Proof ideas of Proposition \ref{Prop: Summary}:}
For a contracting element $g\in G$, Subsection \ref{subsec: ProjectionComplex} shows that there exists a  projection complex $\P_K(\F)$ where $\F=\{f\ax(g): f\in G\}$ such that $\P_K(\F)$ is a quasi-tree on which $G$ acts co-boundedly. Hence, we only need to find an appropriate contracting element $g\in G$ such that each $H_i$ acts elliptically on $\P_K(\F)$ for $1\le i\le n$. By analyzing the algebraic and geometric properties of $\{H_i: 1\le i\le n\}$, we can obtain a contracting element $g$ such that the projection of $[o,ho]$ onto $\ax(g)$ is uniformly bounded for any $h\in \cup_{1\le i\le n}H_i$. Finally, we will show that such a contracting element $g$ meets the requirements.

In order to characterize the magnitude of the (closest point) projection to an axis of a contracting element, we introduce a definition of barriers.
}

\begin{definition}\cite[Definition 4.1]{Yan19}\label{Def: Barriers}
    Let $\epsilon\ge 0$ and $f\in G$. We say a geodesic $\alpha\subset X$ contains an \textit{$(\epsilon,f)$-barrier} if there exists an element $t\in G$ such that the following holds:
    $$d(to,\alpha)\le \epsilon, \quad d(tfo,\alpha)\le \epsilon.$$
    Otherwise, $\alpha$ is called \textit{$(\epsilon,f)$-barrier-free}. An element $g\in G$ is called \textit{$(\epsilon,f)$-barrier-free} if \revise{some choice of geodesic from $o$ to $go$} is $(\epsilon,f)$-barrier-free.
\end{definition}

\begin{figure}[ht]
  \centering
  \begin{tikzpicture}
\draw [black] (-4,0)--(4,0);
\filldraw [red] (-3,1) circle (1pt) node[black, above] {$to$};
\filldraw [red] (3,1) circle (1pt) node[black, above] {$tfo$};
\draw [black] (-3,1) .. controls (0,0.2) .. (3,1);
\draw [dashed, black] (-3,1)--(-3,0);
\draw [dashed, black] (3,1)--(3,0);
\draw (-3,0.4) node[black,left] {$\leq \epsilon$};
\draw (3,0.4) node[black,right] {$\leq \epsilon$};
\draw (0,-0.5) node[black,right] {$\alpha$};
\end{tikzpicture}
  \caption{$\alpha$ contains an $(\epsilon,f)$-barrier}
\end{figure}

Recall that \revise{the Extension Lemma, i.e. Lemma \ref{Lem: ExtensionLemma}} gives a set $F\subset G$ consisting of three contracting elements. \revise{The following result of Han-Yang-Zou shows that for any contracting element $g=g_0f$ obtained by Extension Lemma, any geodesic segment with a large projection to $\ax(g)$ contains an $(\epsilon,g_0)$-barrier where $\epsilon$ depends only on $F$.}
\begin{lemma}\cite[Lemma 2.9]{HYZ23}\label{Lem: UniformEpsilon}
    For any $g_0\in G$, let $g=g_0f$ be the contracting element given by Lemma \ref{Lem: ExtensionLemma}. Then there exist $\epsilon=\epsilon(F), \tau=\tau(F,g)$ such that a geodesic segment $\alpha$ with $\diam(\pi_{\ax(g)}(\alpha))>\tau$ contains an  $(\epsilon,g_0)$-barrier.
\end{lemma}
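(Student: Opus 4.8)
The plan is to show that a geodesic $\alpha$ with very large projection to $\ax(g)$ is forced to closely follow the ``orbit quasi-geodesic'' $\gamma=\cup_{n\in\mathbb Z}g^n[o,go]$ furnished by the Extension Lemma (here $g=g_0f$, so $go=g_0fo$), and then to read off the required barrier pair $\{g^n o,\,g^n g_0 o\}$ from a suitably chosen period of $\gamma$.

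First I would record the relevant features of $\gamma$. By Lemma \ref{Lem: ExtensionLemma}, $\gamma$ is a $(D,\tau_0)$-admissible path whose admissibility constants $D,\tau_0,\lambda$ depend only on $F$, with contracting subsets $\{g^n g_0\ax(f)\}_{n\in\mathbb Z}$, and it is a $C_\gamma$-contracting $\lambda$-quasi-geodesic with $C_\gamma$ depending on $d(o,go)$. Since $g^n g_0 o$ and $g^{n+1}o$ both lie in the contracting subset $g^n g_0\ax(f)$, applying Proposition \ref{PROP: Fellow Travel} to a single period $[g^n o,g^{n+1}o]$ yields a constant $\epsilon_1=\epsilon_1(F)$ so that the long $p$-piece $p_n\subset[g^n o,g^{n+1}o]$ of $\gamma$ in that period satisfies $d((p_n)_+,g^{n+1}o)\le\epsilon_1$ and $d((p_n)_-,g^n g_0 o)\le\epsilon_1$. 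Thus the marked points of $\gamma$ in consecutive periods include points $\epsilon_1$-close to $g^n o$ and to $g^n g_0 o$. I also note that, because $[E(g):\langle g\rangle]<\infty$ and $G\curvearrowright X$ is proper, $\langle g\rangle o$, $\gamma$ and $\ax(g)$ are pairwise within a finite Hausdorff distance $\Delta_g$ (a constant depending only on $g$); in particular $\diam(\pi_{\ax(g)}(\alpha))$ and $\diam(\pi_\gamma(\alpha))$ agree up to an error depending only on $g$.

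Now, given $\alpha=[x,y]$ with $\diam(\pi_{\ax(g)}(\alpha))>\tau$, choose $\tau=\tau(F,g)$ large enough (bigger than a uniform multiple of $d(o,go)+\Delta_g+C_\gamma$) so that $\diam(\pi_\gamma(\alpha))$ is large as well. Let $x_\gamma\in\pi_\gamma(x)$, $y_\gamma\in\pi_\gamma(y)$, and round them to the nearest orbit points $g^a o$ (before $x_\gamma$) and $g^b o$ (after $y_\gamma$), arranging $b-a\ge 3$. Form the detour path $P:=[x,g^a o]\cup\gamma|_{[g^a o,\,g^b o]}\cup[g^b o,y]$. Here $\gamma|_{[g^a o,g^b o]}$ is an admissible sub-path of $\gamma$ with $F$-only constants; and because $g^a o,g^b o$ are roundings of closest-point projections of $x,y$ to $\gamma$, the ``Bounded Projection'' conditions (see Definition \ref{DEF: Admissible Path}) at the first and last $p$-pieces of $P$ hold with an $F$-only constant, so $P$ is a $(D',\tau_0')$-admissible path with $D',\tau_0'$ depending only on $F$. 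By Proposition \ref{PROP: Fellow Travel}, the geodesic $\alpha$ (with the same endpoints as $P$) $\epsilon_0$-fellow-travels $P$ for some $\epsilon_0=\epsilon_0(F)$; in particular $\alpha$ passes within $\epsilon_0$ of $(p_m)_-$ and $(p_m)_+$ for every $p$-piece $p_m$ of $P$, i.e. for all $a\le m\le b-1$. Fixing any $n$ with $a<n\le b-1$, combining with the previous paragraph gives $d(g^n o,\alpha)\le\epsilon_0+\epsilon_1$ and $d(g^n g_0 o,\alpha)\le\epsilon_0+\epsilon_1$. Taking $t:=g^n$ and $\epsilon:=\epsilon_0+\epsilon_1=\epsilon(F)$ exhibits the desired $(\epsilon,g_0)$-barrier on $\alpha$.

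The main obstacle I expect is the bookkeeping in the third step: verifying that the detour path $P$ satisfies the admissibility axioms with constants depending only on $F$ and not on $g$. This requires controlling the closest-point projections $\pi_{g^a g_0\ax(f)}(x)$ and $\pi_{g^{b-1}g_0\ax(f)}(y)$, for which I would invoke the projection-axiom and total-order machinery of Subsection \ref{subsec: ProjectionComplex} together with the fact that $g^a o,g^b o$ are roundings of nearest points of $\gamma$ to $x,y$ — and then carefully keeping track of which constants are $F$-only (the admissibility and fellow-travel constants of $\gamma$ and of $P$, and $\epsilon_1$) and which are allowed to depend on $g$ (these are confined entirely to the lower bound imposed on $\tau$, through $d(o,go)$, $\Delta_g$ and $C_\gamma$). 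Identifying the marked points of a single period of $\gamma$ with points $\epsilon_1$-close to $g^n o$ and $g^n g_0 o$ is the other point requiring care, but it is essentially built into the conclusion of the Extension Lemma.
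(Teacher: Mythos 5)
The paper itself offers no proof of this statement --- it is imported verbatim from \cite[Lemma 2.9]{HYZ23} --- so your attempt can only be judged on its own terms. Your overall strategy is the right one and matches the spirit of the barrier arguments in \cite{Yan19, HYZ23}: exploit the fact that the admissibility and fellow-travelling constants of $\gamma=\cup_n g^n[o,go]$ depend only on $F$, force $\alpha$ to shadow several consecutive periods of $\gamma$, and read off the barrier from a $q$-piece $g^n[o,g_0o]$ via $t=g^n$, with all $g$-dependence confined to the threshold $\tau$. The identification of the barrier pair as $(g^no, g^ng_0o)$ and the bookkeeping of which constants are $F$-only are both correct.

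The genuine gap is the assertion that the detour path $P=[x,g^ao]\cup\gamma|_{[g^ao,g^bo]}\cup[g^bo,y]$ satisfies the (BP) condition at its first and last contracting sets with an $F$-only constant ``because $g^ao,g^bo$ are roundings of closest-point projections of $x,y$ to $\gamma$.'' That justification does not work: the closest-point projection of $x$ to the concatenated path $\gamma$ gives no control on the closest-point projection of $x$ to the individual quasi-line $X_a=g^ag_0\ax(f)$, which extends far beyond the short sub-segment $g^ag_0[o,fo]$ it contributes to $\gamma$. (Concretely, if $x$ sits far out along $X_a$ away from that sub-segment, then $\pi_\gamma(x)$ still lands in period $a$ while $\diam(\pi_{X_a}\{x,g^ag_0o\})$ is arbitrarily large, so $P$ is simply not admissible and Proposition \ref{PROP: Fellow Travel} cannot be invoked.) This is not bookkeeping but the actual content of the lemma: the whole point is that $\epsilon$ must not depend on $g$, and the naive repairs (e.g.\ the coarse-Lipschitz bound $\diam(\pi_{X_a}\{x,g^ao\})\le d(x,g^ao)+4C_F$) reintroduce the period length $d(o,go)$ and hence a $g$-dependent (BP) constant, which would propagate into $\epsilon$. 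The correct fix is the one you gesture at but do not carry out: use the projection axioms and Lemma \ref{TotalOrder} to locate, inside the chain $X_a,\dots,X_{b-1}$, the first and last sets $X_{a'},X_{b'}$ with $d_{X_{a'}}(x,y),d_{X_{b'}}(x,y)>K(F)$; for those one gets $\pi_{X_{a'}}(x)$ within $O_F(\kappa+\tau_0)$ of $(p_{a'})_-$ by axiom (2), at the cost of discarding one or two outermost sets (so $b-a$ must be taken a bit larger, which only affects $\tau$). Until that step is written out, the proof is incomplete.
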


\begin{remark}\label{Rmk: UniformEpsilon}
    Actually, one can strengthen the conclusion of Lemma \ref{Lem: UniformEpsilon} as follows: if a geodesic segment $\alpha$ satisfies $\diam(\pi_{b\ax(g)}(\alpha))>\tau$ for some $b\in G$, then $\alpha$ contains an $(\epsilon,g_0)$-barrier. The reason is that from the definition of barriers (cf. Definition \ref{Def: Barriers}), it is equivalent to say that $\alpha$ or $b^{-1}\alpha$ contains an $(\epsilon,g_0)$-barrier. Thus, by substituting $\alpha$ with $b^{-1}\alpha$, one can apply Lemma \ref{Lem: UniformEpsilon} to obtain the stronger conclusion.
\end{remark}

\revise{To get a contracting element such that the projection of each $[o,ho]$ to $\ax(g)$ is uniformly bounded, we need to find an element $g_0$ such that $h$ is $(\epsilon,g_0)$-barrier-free for $h\in \cup_{1\le i\le n}H_i$.}

\begin{lemma}\cite[Lemma 4.1]{Neu54b}\label{Lem: FiniteUnion}
    Let the group $G$ be the union of finitely many cosets of subgroups $C_1,\ldots, C_n$. Then the index of (at least) one of these subgroups in $G$ does not exceed $n$.
\end{lemma}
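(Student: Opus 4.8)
The model case is $G$ finite, where the statement is immediate: since $|a_iC_i|=|C_i|=|G|/[G:C_i]$, a covering $G=\bigcup_{i=1}^{n}a_iC_i$ forces $|G|\le\sum_{i=1}^{n}|G|/[G:C_i]$, i.e. $\sum_{i=1}^{n}[G:C_i]^{-1}\ge 1$, so some term is $\ge 1/n$ and $[G:C_i]\le n$ for that $i$. So the plan is to reduce the general case to a finite quotient and then run exactly this counting argument.

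First I would pass to an \emph{irredundant} subcovering: writing $G=\bigcup_{i=1}^{n}a_iC_i$ (with the $C_i$ drawn from $\{C_1,\dots,C_n\}$, repetitions allowed), discard cosets one at a time as long as the remaining ones still cover $G$. This only lowers the number of cosets, so it suffices to prove the statement for an irredundant covering $G=\bigcup_{i=1}^{m}a_iC_i$ with $m\le n$. The crucial point — this is Neumann's lemma proper — is that in an irredundant covering every $C_i$ has \emph{finite} index in $G$. The mechanism: if $[G:C_j]=\infty$, set $J=\{i:C_i=C_j\}$; only finitely many left cosets of $C_j$ occur among $\{a_iC_i\}_{i\in J}$, so, $[G:C_j]$ being infinite, there is a fresh left coset $gC_j$ disjoint from all of them, whence $gC_j\subseteq\bigcup_{i\notin J}a_iC_i$; translating this inclusion by suitable group elements and iterating over the finitely many distinct subgroups that appear lets one dispense with the coset $a_jC_j$ — this is where irredundancy is contradicted. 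I would cite \cite{Neu54b} for the precise bookkeeping here. Granting it, $\bigcap_{i=1}^{m}C_i$ has finite index in $G$, so its normal core $N$ is a finite-index normal subgroup of $G$ contained in every $C_i$; passing to $\bar G=G/N$, the images $\bar a_i\bar C_i$ still cover $\bar G$ and $[\bar G:\bar C_i]=[G:C_i]$, and the finite-case count yields $[G:C_i]\le m\le n$ for some $i$.

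The main obstacle is precisely the finiteness-of-indices step for irredundant coverings. The delicate feature is that ``translating a fresh coset'' manufactures cosets absent from the original covering, so one cannot simply induct on the number of cosets; Neumann's resolution is to induct on the number of \emph{distinct} subgroups appearing, eliminating the infinite-index ones one at a time and then arguing that the offending coset $a_jC_j$ was removable, against irredundancy. Everything else — the reduction to an irredundant covering and the descent to a finite quotient followed by the density count — is routine.
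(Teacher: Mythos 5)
The paper does not prove this lemma at all --- it is quoted verbatim from Neumann's 1954 paper \cite{Neu54b} as an external black box --- so there is no internal proof to compare against. Your overall strategy is the standard one and the parts you actually execute are sound: the reduction to an irredundant subcovering, the observation that once all surviving subgroups have finite index one may pass to the normal core $N$ of $\bigcap_i C_i$ (finite index by Poincar\'e's lemma), and the counting $|\bar G|\le\sum_i|\bar G|/[\bar G:\bar C_i]$ in $\bar G=G/N$, which correctly yields $[G:C_i]\le m\le n$ for one of the surviving (hence original) subgroups.

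The one substantive step is the claim that an irredundant covering involves only finite-index subgroups, and there your argument is not yet closed: you correctly note that translating the fresh coset $gC_j$ produces the inclusion $a_jC_j\subseteq\bigcup_{i\notin J}a_jg^{-1}a_iC_i$, but the cosets on the right are \emph{not} members of the original covering, so this does not contradict irredundancy as written --- you flag exactly this difficulty and then defer the resolution to \cite{Neu54b}, which leaves the hard point unproved. To close it self-containedly: first prove, by induction on the number of \emph{distinct} subgroups appearing, that no group is a finite union of cosets of infinite-index subgroups (the base case is a single subgroup, where finitely many disjoint cosets cannot exhaust a group of infinite index; the inductive step is your substitution, which eliminates one distinct subgroup while keeping all remaining subgroups of infinite index, so the growth in the number of cosets is harmless). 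Then, if the union $U$ of the finite-index cosets of the original covering were proper, $G\setminus U$ would be a nonempty union of cosets of the normal core $N$ of the finite-index part; intersecting one such coset $gN$ with the covering by the infinite-index cosets exhibits $N$ as a finite union of cosets of the subgroups $C_i\cap N$, each of infinite index in $N$, contradicting the auxiliary lemma. This gives the omission statement (hence finiteness of indices in an irredundant covering), after which your counting argument finishes the proof. So the architecture is right, but as a standalone proof the proposal has a genuine gap at precisely the step that carries all the content.
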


\begin{lemma}\label{Lem: g1g2}
For any $\epsilon\ge 0$, there exists an element $g_0\in G$ such that any $h\in H_i$ with $1\le i \le n$ is $(\epsilon,g_0)$-barrier-free.
\end{lemma}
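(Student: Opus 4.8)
The plan is to produce $g_0$ as a suitable power of a contracting element, using the infinite-index hypothesis on the $H_i$ together with Lemma \ref{Lem: FiniteUnion} to rule out the possibility that a "barrier" is always present. First I would fix, via Lemma \ref{Lem: ExtensionLemma}, the set $F$ of three contracting elements and the associated constant $\epsilon=\epsilon(F)$ from Lemma \ref{Lem: UniformEpsilon}; I will in fact choose $\epsilon$ large enough to absorb the quasi-constants of the relevant quasi-geodesics. The key reduction is the following: if $h\in H_i$ fails to be $(\epsilon,g_0)$-barrier-free, then by Definition \ref{Def: Barriers} a geodesic $[o,ho]$ contains an $(\epsilon,g_0)$-barrier, i.e. there is $t=t(h)\in G$ with $d(to,[o,ho])\le \epsilon$ and $d(tg_0o,[o,ho])\le \epsilon$. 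I would like to conclude that this forces $h$ (or at least $h$ modulo a bounded set) to lie in a coset of a fixed proper subgroup, and then, ranging over all $h\in\bigcup_i H_i$, reach a contradiction with the infinite index.

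The heart of the argument is to locate the candidate "bad" subgroup. The natural choice is $E(g_0)$, or rather a coset thereof: the Morse property of $H_i$ (Definition \ref{Def: MorseSubgp}, Remark \ref{Rmk: FiniteHausDist}) says that any quasi-geodesic with endpoints in $H_i\cdot o$ stays uniformly close to $H_i\cdot o$, so a barrier segment $[to,tg_0o]\subset N_\epsilon(H_i o)$ means $tE(g_0)o$ runs (coarsely) along $H_i o$; properness then pins $t$ to finitely many elements times $H_i$. The slogan is: \emph{if too many elements of $\bigcup_i H_i$ carry a $g_0$-barrier, then $\bigcup_i H_i$ is covered by finitely many cosets of $E(g_0)$-related subgroups, hence (Lemma \ref{Lem: FiniteUnion}) one $H_i$ has finite index — contradiction.} Concretely, I would argue by contradiction: suppose no $g_0$ works. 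Take $g_0$ to range over the powers $g^k$ of a fixed contracting element $g$ (all with a common $E(g)$, and with $\epsilon$ chosen uniformly). For each $k$ there is some $i(k)$ and some $h_k\in H_{i(k)}$ carrying an $(\epsilon,g^k)$-barrier at $t_k$. For $k$ large, the two near-hits $t_ko$ and $t_kg^ko$ of $[o,h_ko]$ are far apart, so a large sub-geodesic of $[o,h_ko]$ $\epsilon$-fellow-travels $t_k\ax(g)$; by the Morse property this sub-geodesic lies in $N_{C}(H_{i(k)}o)$, forcing $t_k\ax(g)$ and $H_{i(k)}o$ to coarsely agree on a long interval, so $t_k\in F_0\cdot H_{i(k)}$ for a finite set $F_0$ independent of $k$ (properness). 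This already says $h_k$, $h_k^{-1}$, or a bounded modification lands in a fixed coset structure; carefully repackaged — using that the barrier gives an $E(g)$-type "chunk" inside $H_{i(k)}$ — one extracts that infinitely many elements of some $H_i$ lie in finitely many cosets of a proper subgroup, from which Lemma \ref{Lem: FiniteUnion} yields that $H_i$ has finite index, the desired contradiction.

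The main obstacle, I expect, is making the passage "barrier $\Rightarrow$ membership in a fixed family of cosets" genuinely uniform in $h$ and quantitatively clean: the Morse gauge of $H_i$ controls how far the geodesic strays from $H_i o$, but one must combine this with the bounded-intersection/contraction bookkeeping for $\ax(g)$ (Lemma \ref{Lem: one element}, the projection axioms) to see that the element $t$ witnessing the barrier is determined up to finitely many choices by $h$'s coset. Getting the quantifiers right — choose $g$ first, then $\epsilon=\epsilon(F)$, then $k$ large depending only on $\epsilon$ and the Morse gauges and the contraction constant, all \emph{before} the contradiction hypothesis produces $h$ — is where the care is needed; the combinatorial core, once the uniform constants are in place, is a direct application of Lemma \ref{Lem: FiniteUnion} exactly as in \cite{HYZ23}. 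A cleaner alternative, which I would try in parallel, is to avoid powers entirely: show directly that for each $i$ the set of $g_0$ for which \emph{some} $h\in H_i$ has an $(\epsilon,g_0)$-barrier is "small" (contained in a bounded neighborhood of $H_i o$ in a suitable sense), so that its complement is nonempty and, intersecting over the finitely many $i$, still nonempty; this reduces the whole lemma to the single statement that $H_i o$ has infinite co-diameter in $X$, which is immediate from infinite index and properness.
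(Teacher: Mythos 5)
You have the right raw ingredients in hand (the Morse gauge of $H_i$, properness to pin witnesses to a finite set $S$, and Lemma \ref{Lem: FiniteUnion}), but you assemble them in the wrong direction, and this is a genuine gap rather than a presentational issue. In the paper's argument the barrier condition is used to constrain \emph{$g_0$}, not $h$ or $t$: if $[o,ho]$ contains an $(\epsilon,g_0)$-barrier at $b$, then the Morse property of $H_i o$ (applied with gauge $\eta(1)=:M$ to the geodesic $[o,ho]$ itself) gives $h_1,h_2\in H_i$ with $b^{-1}h_1,\,h_2^{-1}bg_0\in S:=\{s: d(o,so)\le \epsilon+M\}$, whence $g_0\in SH_iS$. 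Lemma \ref{Lem: FiniteUnion} is then invoked once, to show that $G\ne S\bigl(\bigcup_i H_i\bigr)S$ (this set is a finite union of cosets $\bigl(sH_is^{-1}\bigr)(sS)$ of infinite-index subgroups), so \emph{any} $g_0$ in the complement works. Your version instead tries to conclude that ``infinitely many elements of some $H_i$ lie in finitely many cosets of a proper subgroup, from which Lemma \ref{Lem: FiniteUnion} yields that $H_i$ has finite index.'' That is a non sequitur: Neumann's lemma requires the \emph{whole group} $G$ to be covered by finitely many cosets, and a covering of a subset of $H_i$ by cosets of some unrelated subgroup says nothing about $[G:H_i]$. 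The slogan ``too many barriers $\Rightarrow$ $\bigcup_i H_i$ is covered by cosets of $E(g_0)$-related subgroups'' constrains the wrong variable and does not close.

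Two further problems. First, restricting the candidates $g_0$ to powers $g^k$ of a \emph{pre-fixed} contracting element can fail outright: if, say, $\langle g\rangle\le H_1$, then for every $k$ the element $h=g^{2k}\in H_1$ carries an $(\epsilon,g^k)$-barrier on $[o,g^{2k}o]$ (the geodesic fellow-travels $\langle g\rangle o$), so no power of $g$ is admissible; choosing $g$ to avoid this is precisely the content of the lemma, so the reduction is circular. In the paper $g_0$ is an arbitrary element of $G\setminus S(\bigcup_i H_i)S$ — it need not be contracting, a power, or related to $E(\cdot)$ at all, and none of the projection-axiom or bounded-intersection machinery you anticipate is needed (the contracting element is only built afterwards, as $g_0f$ via Lemma \ref{Lem: ExtensionLemma}). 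Second, your ``cleaner alternative'' rests on the false premise that the bad set of $g_0$'s is a bounded metric neighborhood of $H_io$: it is $SH_iS$, a two-sided product, and $sH_is'\cdot o$ is in general unboundedly far from $H_io$ (already in $F_2$ with $H=\langle a\rangle$ and $s=s'=b$). So nonemptiness of the complement is not ``immediate from infinite co-diameter''; it is exactly where Lemma \ref{Lem: FiniteUnion} is needed. (A minor point: the lemma asserts the conclusion for every $\epsilon\ge 0$, whereas you fix $\epsilon=\epsilon(F)$ from Lemma \ref{Lem: UniformEpsilon}; the paper's $S$ simply absorbs an arbitrary $\epsilon$.)
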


\begin{proof}
    At first, we claim that the set $G\setminus S(\cup_{1\le i\le n} H_i)S$ is infinite for any finite subset $S\subset G$. Suppose not, by enlarging $S$ by a finite set, we can assume that $G=S(\cup_{1\le i\le n} H_i)S=\cup_{1\le i\le n} (SH_iS)=\cup_{s\in S}\cup_{1\le i\le n}(sH_is^{-1})(s\cdot S)$ where each $s\cdot S$ is a finite set. As each conjugate of $H_i$ is still an infinite-index subgroup of $G$, the above decomposition implies that $G$ can be written as  a finite union of cosets of infinite-index subgroups. This contradicts Lemma \ref{Lem: FiniteUnion}. The claim thus follows.

    \revise{Since each $H_i$ is Morse for $1\le i\le n$, it follows from Definition \ref{Def: MorseSubgp} that there exists a function $\eta: \R\to \R$ such that all orbits $H_i\cdot o(1\le i\le n)$ are $\eta$-Morse.} Let $M=\eta(1)$. Denote $S:=\{g\in G: d(o,go)\leq \epsilon+M\}$. As $G$ acts properly on $X$, the set $S$ is finite.

    \revise{According to the above claim, the set $G\setminus S(\cup_{1\le i\le n} H_i)S$ is infinite. We are going to show that every element $g_0\in G\setminus S(\cup_{1\le i\le n} H_i)S$ satisfies the requirement. Suppose to the contrary that there exist some $i\in \{1,\dots, n\}$ and some element $h\in H_i$ such that $h$ is not $(\epsilon, g_0)$-barrier-free. By definition of barriers, for any geodesic segment $[o,ho]$, there exist an element $b\in G$ and two points $x,y\in [o,ho]$ such that $d(bo,x), d(bg_0o,y)\le \epsilon$. \rev{See the following figure for an illustration.}

    \begin{figure}[ht]
  \centering
  \begin{tikzpicture}
\draw [black] (-4,0)--(4,0);
\filldraw [red] (-3,1) circle (1pt) node[black, above] {$bo$};
\filldraw [red] (3,1) circle (1pt) node[black, above] {$bg_0o$};
\filldraw [red] (4,0) circle (1pt) node[black, below] {$ho$};
\filldraw [red] (-4,0) circle (1pt) node[black, below] {$o$};
\draw [black] (-3,1) .. controls (0,0.2) .. (3,1);
\draw [dashed, black] (-3,1)--(-3,0);
\draw [dashed, black] (3,1)--(3,0);
\filldraw [red] (3,0) circle (1pt) node[black, below] {$y$};
\filldraw [red] (-3,0) circle (1pt) node[black, below] {$x$};
\draw (-3,0.4) node[black,left] {$\leq \epsilon$};
\draw (3,0.4) node[black,right] {$\leq \epsilon$};
\end{tikzpicture}
  \caption{$[o,ho]$ contains an $(\epsilon,g_0)$-barrier}
\end{figure}

    Since the orbit $H_i\cdot o$ is $\eta$-Morse in $X$, by definition of Morse property, the geodesic segment $[o,ho]$ is contained in $N_M(H_i o)$. Since $x,y\in [o,ho]$, there exist $h_1,h_2\in H_i$ such that $d(x,h_1o),d(y,h_2o)\le M$. Combining two inequalities together, one gets that $d(bo,h_1o),d(bg_0o,h_2o)\le \epsilon+M$. By construction of $S$, we have $b^{-1}h_1, h^{-1}_2bg_0 \in S$. Therefore, $$g_0 \in b^{-1}h_2S=b^{-1}h_1\cdot h_1^{-1}h_2S\subset SH_iS,$$ which contradicts with the choice of $g_0$. The conclusion then follows.}
\end{proof}

\revise{By combining Remark \ref{Rmk: UniformEpsilon} and Lemma \ref{Lem: g1g2}, we obtain that}

\begin{lemma}\label{Lem: UniformShortProjection}
There exist a contracting element $g\in G$ and  $\tau>0$ such that for any $b\in G$ and $h\in H_i$ with $1\le i \le n$,
$$\mathrm{diam}(\pi_{b\ax(g)}([o,ho]))\leq \tau.$$
\end{lemma}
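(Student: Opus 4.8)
The plan is to obtain the contracting element $g$ by combining, in the right order, the Extension Lemma (Lemma \ref{Lem: ExtensionLemma}), the barrier estimate of Han--Yang--Zou (Lemma \ref{Lem: UniformEpsilon} together with its equivariant strengthening Remark \ref{Rmk: UniformEpsilon}), and the coset-counting input Lemma \ref{Lem: g1g2}. All of the geometry has already been carried out in those statements; what remains is essentially to choose the constants so that none of them is invoked before it has been defined.

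Concretely, first fix the set $F \subset G$ of three contracting elements from Lemma \ref{Lem: ExtensionLemma} and let $\epsilon = \epsilon(F) > 0$ be the constant furnished by Lemma \ref{Lem: UniformEpsilon}; crucially this $\epsilon$ depends only on $F$ and not on the contracting element that will be built. Apply Lemma \ref{Lem: g1g2} with this $\epsilon$ to get $g_0 \in G$ such that every $h \in H_i$, $1 \le i \le n$, is $(\epsilon, g_0)$-barrier-free, so that for each such $h$ one may select a geodesic $[o, ho]$ carrying no $(\epsilon, g_0)$-barrier. Now feed $g_0$ into the Extension Lemma to produce $f \in F$ with $g := g_0 f$ a contracting element, and finally let $\tau = \tau(F, g) > 0$ be the constant attached to this $g$ in Lemma \ref{Lem: UniformEpsilon} (enlarging it so that $\tau \ge \epsilon$ if convenient). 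There is no circularity here: $\epsilon$ depends on $F$ alone, then $g_0$ on $\epsilon$, then $g$ on $g_0$, and only then $\tau$ on $g$.

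It remains to check the projection bound, which we do by contradiction: if $\diam\bigl(\pi_{b\ax(g)}([o,ho])\bigr) > \tau$ for some $b \in G$, some $1 \le i \le n$ and some $h \in H_i$ (with $[o,ho]$ the barrier-free geodesic chosen above), then Remark \ref{Rmk: UniformEpsilon} forces $[o,ho]$ to contain an $(\epsilon, g_0)$-barrier, contradicting the choice of that geodesic. Hence $\diam(\pi_{b\ax(g)}([o,ho])) \le \tau$ for all admissible $b, i, h$, which is the assertion. The only subtlety — which I do not regard as a genuine obstacle — is bookkeeping: keeping the dependence of the constants in the stated order, and making sure that for each $h$ the geodesic used in the hypothesis of Remark \ref{Rmk: UniformEpsilon} is exactly the one supplied barrier-free by Lemma \ref{Lem: g1g2} (the paper's convention that $[o,ho]$ denotes a \emph{choice} of geodesic makes this harmless).
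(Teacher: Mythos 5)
Your proposal is correct and follows essentially the same route as the paper's own proof: fix $F$ and $\epsilon=\epsilon(F)$ from Lemma \ref{Lem: UniformEpsilon}, choose $g_0$ via Lemma \ref{Lem: g1g2}, set $g=g_0f$ by the Extension Lemma, take $\tau=\tau(F,g)$, and conclude by contradiction with Remark \ref{Rmk: UniformEpsilon}. The ordering of the constant dependencies that you emphasize is exactly the point the paper also flags, so nothing is missing.
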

\begin{proof}
    \revise{We caution the readers that the constant $\epsilon:=\epsilon(F)$ given by  Lemma \ref{Lem: UniformEpsilon} only depends on a pre-fixed set $F$. Then} we choose $g_0$ by Lemma \ref{Lem: g1g2} such that each $h\in H_i$ with $1\le i \le n$ is $(\epsilon,g_0)$-barrier-free. Set $g=g_0f$ given by Lemma \ref{Lem: ExtensionLemma} for some $f\in F$. Then  $\tau=\tau(F,g)$ in Lemma \ref{Lem: UniformEpsilon} is the desired uniform projection constant. Otherwise, one gets a contradiction to Remark \ref{Rmk: UniformEpsilon}.
\end{proof}

\revise{For the contracting element given by Lemma \ref{Lem: UniformShortProjection}, we next follow the process in Subsection \ref{subsec: ProjectionComplex} to construct a projection complex. Finally, we show that the projection complex is the quasi-tree which satisfies all requirements of Proposition \ref{Prop: Summary}. }

\paragraph{\textbf{Group actions on projection complex}} 

Let $g$ be a fixed contracting element given by Lemma \ref{Lem: UniformShortProjection}. Lemma \ref{Lem: one element} shows that $\mathcal F=\{f\ax(g): f\in G\}$ with shortest projection maps satisfies the projection axioms with constants $\kappa=\kappa(\mathcal F)>0$. Hence, one can construct a projection complex $\mathcal P_K(\mathcal F)$ (cf. Definition \ref{Def: ProjectionCpx}) for $K\gg 0$. As a result of Lemma \ref{Lem: PCQuasi-tree}, $\mathcal P_K(\mathcal F)$ is a quasi-tree, on which $G$ acts non-elementarily and co-boundedly. Set $K\ge \tau+2\kappa+2\epsilon$ where $\tau$ is given by Lemma \ref{Lem: UniformShortProjection}, $\kappa$ is given by Lemma \ref{Lem: one element}, and $\epsilon$ is the fellow travel constant (cf. Proposition \ref{PROP: Fellow Travel}) with respect to a $(D,B)$-admissible path given by Lemma \ref{Lem: AdmissiblePath}.

\begin{lemma}\label{Lem: EllipticAction}
    For each $i\in \{1,\ldots, n\}$, $H_i$ acts elliptically on $\mathcal P_K(\mathcal F)$.
\end{lemma}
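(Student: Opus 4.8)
The plan is to show that the $H_i$-orbit of the distinguished vertex $\ax(g)\in\mathcal F$ is bounded in $\mathcal P_K(\mathcal F)$ --- in fact has diameter at most $1$ --- which suffices, since a group of isometries of a metric space with a bounded orbit acts elliptically. Equivalently, I will prove that $\ax(g)$ and $h\ax(g)$ are equal or adjacent in $\mathcal P_K(\mathcal F)$ for every $h\in H_i$, i.e.\ that $\mathcal F_K(\ax(g),h\ax(g))=\emptyset$.

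The heart of the matter is a uniform estimate on $d_W(\ax(g),h\ax(g))$. Fix $h\in H_i$ and let $W=b\ax(g)\in\mathcal F$ be an arbitrary vertex distinct from both $\ax(g)$ and $h\ax(g)$; I claim $d_W(\ax(g),h\ax(g))\le K$. Since $W$ differs from $\ax(g)$ and from $h\ax(g)$, projection axiom (1) of Definition \ref{Def: ProjectionAxioms} gives $\diam(\pi_W(\ax(g)))\le\kappa$ and $\diam(\pi_W(h\ax(g)))\le\kappa$. For the gap between these two projections I would use the base point: $o\in\ax(g)$ and $ho\in h\ax(g)$, so choosing $p\in\pi_W(o)$ and $q\in\pi_W(ho)$ --- nonempty sets, as the $G$-orbit of $o$ is locally finite by properness of the action --- we get $p,q\in\pi_W([o,ho])$, whence $d(p,q)\le\diam(\pi_W([o,ho]))\le\tau$ by Lemma \ref{Lem: UniformShortProjection} applied to $b$ and $h$. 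Combining via $\diam(A\cup B)\le\diam(A)+\diam(B)+d(a,b)$ for $a\in A$, $b\in B$ yields $d_W(\ax(g),h\ax(g))\le 2\kappa+\tau\le K$, the last inequality being exactly why $K$ was taken $\ge\tau+2\kappa+2\epsilon$ (the surplus also absorbing the bounded discrepancy between $d_W$ and the symmetrized variant used in the official definition of $\mathcal P_K(\mathcal F)$). As $W$ was arbitrary among vertices distinct from $\ax(g)$ and $h\ax(g)$, we conclude $\mathcal F_K(\ax(g),h\ax(g))=\emptyset$.

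It follows that $d_{\mathcal P}(\ax(g),h\ax(g))\le 1$ for all $h\in H_i$, so for $h,h'\in H_i$ one has $d_{\mathcal P}(h\ax(g),h'\ax(g))=d_{\mathcal P}(\ax(g),(h^{-1}h')\ax(g))\le 1$, and the orbit $H_i\cdot\ax(g)$ has diameter at most $1$; hence $H_i$ has a bounded orbit and acts elliptically on $\mathcal P_K(\mathcal F)$. Since Lemma \ref{Lem: UniformShortProjection} is already in hand, I do not expect a real obstacle here; the subtlest point is the constant bookkeeping just indicated (the passage between $d_W$ and its symmetrized form, and the coarseness of the projections). If one preferred to avoid point projections, an equivalent route is to lift a standard path from $o$ to $ho$ with saturation $\mathcal F_K[\ax(g),h\ax(g)]$ (Lemma \ref{Lem: AdmissiblePath}): for $K$ large this lifted path is a quasi-geodesic that is $\epsilon$-fellow traveled by $[o,ho]$, so any interior geodesic piece $p_j$ lying in some $X_j\in\mathcal F$ would force $\diam(\pi_{X_j}([o,ho]))\ge |p_j|-4\epsilon>D-4\epsilon$, contradicting the bound $\tau$ of Lemma \ref{Lem: UniformShortProjection} once $K$, hence $D$, is large enough; thus $\mathcal F_K[\ax(g),h\ax(g)]$ has uniformly bounded cardinality and $d_{\mathcal P}(\ax(g),h\ax(g))$ stays bounded over $h\in H_i$.
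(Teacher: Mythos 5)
Your primary argument is correct, and it takes a genuinely more direct route than the paper. The paper argues by contradiction: assuming some $V\in\mathcal F_K[U,hU]$ exists, it invokes Lemma \ref{Lem: AdmissiblePath} to produce a $(D,B)$-admissible path from $o$ to $ho$ with saturation $\mathcal F_K[U,hU]$, and then uses the $\epsilon$-fellow-travel property (Proposition \ref{PROP: Fellow Travel}) to transfer the large coarse distance $d_V(U,hU)>K$ into the estimate $\diam(\pi_V([o,ho]))>K-2\kappa-2\epsilon\ge\tau$, contradicting Lemma \ref{Lem: UniformShortProjection}. Your first argument shows this machinery is not needed here: because the basepoints satisfy $o\in\ax(g)$ and $ho\in h\ax(g)$, the sets $\pi_W(o)\subset\pi_W(\ax(g))$ and $\pi_W(ho)\subset\pi_W(h\ax(g))$ are both contained in $\pi_W([o,ho])$, so Lemma \ref{Lem: UniformShortProjection} bounds the gap between the two projections by $\tau$ directly, and projection axiom (1) bounds each projection's diameter by $\kappa$; the elementary inequality $\diam(A\cup B)\le\diam(A)+\diam(B)+d(a,b)$ then gives $d_W(\ax(g),h\ax(g))\le 2\kappa+\tau\le K$ for every $W$, with no contradiction argument and no appeal to admissible paths. (The only caveats are the ones you already flag: nonemptiness of the closest-point projections, which follows from properness, and the $2\kappa$ discrepancy with the symmetrized $d_W$ actually used to define $\mathcal P_K(\mathcal F)$ --- the paper is equally informal on the latter point, and both are absorbed by taking $K$ large.) Your alternative route at the end, lifting a standard path with saturation $\mathcal F_K[\ax(g),h\ax(g)]$ and using fellow-traveling against the $\tau$-bound, is essentially the paper's proof. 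What the paper's approach buys is uniformity with the surrounding toolkit (the same admissible-path lemma is used throughout the paper); what yours buys is a shorter, self-contained verification that each $h\ax(g)$ is adjacent to $\ax(g)$.
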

\begin{proof}
    Recall that the vertex set of $\mathcal P_K(\mathcal F)$ is $\mathcal F=\{f\ax(g): f\in G\}$, and two vertices $U,V$ are connected by an edge if the set $\mathcal F_K[U,V]= \{Z \in \mathcal F : d_Z (U,V) > K\}$ is empty. Let $U\in \mathcal P_K(\mathcal F)$ be the point representing $\ax(g)$ \revise{which by definition is the orbit $E(g)\cdot o$}. It suffices for us to show that $d_{\mathcal P}(U,hU)\le 1$ for any $h\in H_i$ with $1\le i\le n$.

    Suppose not; then there exists at least one element in $\mathcal F_K[U,hU]$. Let $V\in \mathcal F_K[U,hU]$. It follows from the definition of $\mathcal F_K[U,hU]$ that $\diam(\pi_V(U)\cup \pi_V(hU))>K$. Note that $o\in U$ and thus $ho\in hU$. Lemma \ref{Lem: AdmissiblePath} shows that there exists a $(D,B)$-admissible path in $X$ from $o$ to $ho$ with saturation $\mathcal F_K[U,hU]$. It follows from the $\epsilon$-fellow travel property of an admissible path that
    $$\diam(\pi_V([o,ho]))\ge \diam(\pi_V(U)\cup \pi_V(hU))-2\kappa-2\epsilon>K-2\kappa-2\epsilon\ge \tau.$$
    As $V$ represents a $G$-translate of $\ax(g)$, one gets  a contradiction to Lemma \ref{Lem: UniformShortProjection}. The conclusion then follows.
\end{proof}

\revise{As a corollary of Lemma \ref{Lem: EllipticAction}, we get Proposition \ref{Prop: Summary}.
\begin{remark}
    With Lemma \ref{Lem: EllipticAction}, we know that the quasi-tree in Proposition \ref{Prop: Summary} is actually a projection complex $\mathcal P_K(\mathcal F)$. Since each vertex in $\mathcal P_K(\mathcal F)$ represents a translate of $\ax(g)$, any conjugate of $E(g)$ fixes a point in $\P_K(\F)$. Hence, the action $G\curvearrowright \mathcal P_K(\mathcal F)$ is not proper.  Furthermore, it is generally difficult to obtain subgroups acting elliptically on $\mathcal P_K(\mathcal F)$ other than the vertex stabilizer. However, Proposition \ref{Prop: Summary} provides such a family of subgroups, i.e. Morse subgroups of infinite index.
\end{remark}
}


\section{Constructing Quasimorphisms}\label{Sec: ConstructingQM}
In this section, we assume that a non-elementary countable group $G$ acts WPD (cf. Definition \ref{Def: WPD}) on a $\delta$-hyperbolic space $X$ and a  finite collection of subgroups $\{H_i: 1\le i\le n\}$ of $G$ acts elliptically on $X$. Our goal is the following result.

\begin{proposition}\label{Prop: KeyProp}
    There is an injective $\mathbb{R}$-linear map $\omega:\ell^1 \rightarrow H^2_b(G;\mathbb R)$ such that each coclass in the image $\omega(\ell^1)$ has a representative vanishing on $H_i$ for each $1\leq i\leq n$.

    Moreover, the
    dimension of $H^2_b(G, \{H_i\}_{i=1}^n; \mathbb R)$ as a vector space over $\mathbb R$ has the cardinality of the continuum.
\end{proposition}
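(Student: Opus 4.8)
The plan is to construct an infinite family of homogeneous quasimorphisms on $G$ that are uniformly bounded (indeed vanishing) on each $H_i$, and are "independent enough" that summing them against an $\ell^1$-sequence of coefficients produces a well-defined quasimorphism whose associated bounded coclass depends injectively on the sequence. Concretely, since $G$ acts WPD and non-elementarily on the $\delta$-hyperbolic space $X$, it contains a loxodromic WPD element, hence (by Bestvina-Fujiwara, \cite[Proposition 2]{BF02}) a rank-two free subgroup together with an infinite collection of words $\{f_i\}$ in it satisfying the non-conjugacy/anti-alignment conditions that make the Epstein-Fujiwara counting construction work. First I would recall the Epstein-Fujiwara big-counting quasimorphism $h_{f}$ attached to a suitable element $f$ acting loxodromically on $X$: it is built from counting (signed) copies of a long geodesic segment $[o,fo]$ and its inverse along geodesics in $X$, and $\bar h_f$ is its homogenization. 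The key input is that WPD (rather than acylindricity) already suffices to run Fujiwara's argument \cite{Fuj98}, giving a family $\{\bar h_i\}$ with controlled defects and with the matrix of values $\bar h_i(f_j)$ "generic" in the sense of Proposition \ref{Prop: InfiniteQM} (this is exactly the content the paper attributes to that proposition).

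The new ingredient here, compared to \cite{Fuj98, BF02}, is the vanishing on the $H_i$. Since each $H_i$ acts elliptically on $X$, the orbit $H_i\cdot o$ has bounded diameter, so any geodesic with endpoints in $H_i\cdot o$ is short; hence for $h\in H_i$ the geodesic $[o,ho]$ contains at most a bounded number of copies of the chosen long segment, so the counting quasimorphism is bounded on $H_i$, and therefore its homogenization $\bar h_i$ — being a class function that is bounded on $H_i$ and homogeneous — vanishes identically on $H_i$. (More carefully: a homogeneous quasimorphism bounded on a subgroup vanishes on that subgroup, since $\bar h(h)=\bar h(h^m)/m$ for all $m$.) Thus each $\bar h_i$ restricts to $0$ on every $H_j$, and by choosing the segments long enough we may assume $d^1\bar h_i$ restricted to $H_j\times H_j$ is literally zero, i.e. $d^1\bar h_i\in C^2_b(G,\{H_j\}_{j=1}^n;\R)$.

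Next I would assemble the map $\omega$. Given $(x_i)\in\ell^1$, form $\Phi=\sum_i x_i\,\bar h_i$; using the uniform bound on $\Delta(\bar h_i)$ and on the finitely many "overlap" terms controlled by Proposition \ref{Prop: InfiniteQM}, the series converges pointwise to a quasimorphism with $\Delta(\Phi)\le C\|(x_i)\|_{\ell^1}$, and $\Phi$ still vanishes on each $H_j$. Define $\omega((x_i)):=[d^1\Phi]_b\in H^2_b(G;\R)$; it lies in the image of $C^2_b(G,\{H_j\}_{j=1}^n;\R)$ and is visibly $\R$-linear in $(x_i)$. Injectivity is where one uses genericity: if $[d^1\Phi]_b=0$ then $\Phi$ is a trivial quasimorphism, hence (being homogeneous) a homomorphism $G\to\R$; evaluating on the free-group words $f_j$ and on commutators $[f_j,f_k]$ and using that the value-matrix $(\bar h_i(f_j))$ from Proposition \ref{Prop: InfiniteQM} has the required rank/domination property forces all $x_i=0$. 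This gives the injective $\omega:\ell^1\to H^2_b(G;\R)$ with representatives in $C^2_b(G,\{H_i\}_{i=1}^n;\R)$, i.e. the first assertion.

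For the moreover: the long exact sequence \eqref{Equ: LongExactSeq} in degree $2$ reads $H^1_b(H;\R)\to H^2_b(G,\{H_i\};\R)\to H^2_b(G;\R)$, but here I want the map from the relative group; since each coclass $\omega((x_i))$ has a representing cocycle lying in $C^2_b(G,\{H_i\};\R)$, it lifts to $H^2_b(G,\{H_i\};\R)$, and the composition $\ell^1\to H^2_b(G,\{H_i\};\R)\to H^2_b(G;\R)$ is the injective $\omega$, so $\ell^1$ embeds into $H^2_b(G,\{H_i\}_{i=1}^n;\R)$. Since $\dim_\R\ell^1=\mathfrak{c}$ and the whole space $H^2_b(G,\{H_i\};\R)$ has cardinality at most $\mathfrak{c}$ (it is a subquotient of the space of bounded real functions on the countable set $G^2$), its dimension as an $\R$-vector space is exactly the cardinality of the continuum. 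I expect the main obstacle to be the bookkeeping in the convergence-and-injectivity step: verifying that the WPD hypothesis (not acylindricity) is enough for the Epstein-Fujiwara quasimorphisms to satisfy the quantitative independence of Proposition \ref{Prop: InfiniteQM}, and that the $\ell^1$-sum retains a uniformly bounded defect while the pairing against $\{f_j\}$ stays "diagonally dominant" enough to conclude $\Phi=0\Rightarrow (x_i)=0$; the vanishing-on-$H_i$ part, by contrast, is a soft consequence of ellipticity.
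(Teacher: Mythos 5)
Your proposal is correct and follows essentially the same route as the paper: Bestvina--Fujiwara words $f_i$ in a Schottky subgroup, Epstein--Fujiwara counting quasimorphisms with uniformly bounded defect, $\ell^1$-summation using that only finitely many of the quasimorphisms are nonzero at any fixed group element, and injectivity by evaluating on the $f_i$ (which lie in $[G,G]$). The only variation is that you homogenize first and derive the vanishing on each $H_j$ from ellipticity via the soft fact that a homogeneous quasimorphism bounded on a subgroup vanishes there, whereas the paper keeps the raw counting quasimorphisms and shows they vanish on each $H_j$ literally, by choosing the words long enough that every $h\in H_j$ is barrier-free (Lemma \ref{Lem: no w}); both routes work.
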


\begin{definition}\cite{BF02}\label{Def: WPD}
    We say that the action of $G$ on a hyperbolic space $X$ satisfies \textit{WPD} if
    \begin{itemize}
        \item $G$ is not virtually cyclic,
        \item $G$ contains at least one element that acts on $X$ as a loxodromic isometry, and
        \item for every loxodromic element $g\in G$, every $x\in X$, and every $C>0$, there exists $N>0$ such that the set $$\{h\in G: d(x,hx)\le C, d(g^Nx, hg^Nx)\le C\}$$ is finite.
    \end{itemize}
\end{definition}

In \cite{BF02}, Bestvina-Fujiwara  showed that the dimension of $\rm{QM}(G)/(H^1(G;\mathbb R)\oplus C_b^1(G;\mathbb R))$ \revise{as a $\R$-vector space has the cardinality of the continuum under the assumption of WPD actions}. This \revise{implies} the absolute version of Proposition \ref{Prop: KeyProp}.
\revise{However, the relative version of second bounded cohomology of a group acting WPD on a hyperbolic space has never been studied before, which is where the value of our Proposition \ref{Prop: KeyProp} lies. Regarding the proof of Proposition \ref{Prop: KeyProp}, we generally follow the proof idea in \cite{BF02}, but we will utilize some new techniques (eg. barriers) developed in this paper.}

\subsection{Epstein-Fujiwara quasimorphisms on groups acting on hyperbolic spaces.}\label{Subsec: EF QG}

At first, let us recall some basic material about Epstein-Fujiwara quasimorphisms introduced in \cite{Fuj98}.

Let $\alpha$ be a finite path in $X$. We denote the length of $\alpha$ by $|\alpha|$. We use the action of $g\in G$ on $X$ to define a path $g\cdot \alpha$ which is the $g$-translation of the path $\alpha$. We say that $g\cdot \alpha $ is a \textit{copy} of $\alpha$. Let $w$ be a finite oriented path, and let $w^{-1}$ be the inverse path. We assume that $|w|\ge 2$ and define
\begin{equation*}
 |\alpha|_w:=\{ \text{the maximal number of copies of $w$ in $\alpha$ without overlapping \revise{(except at the vertices)}} \}. \end{equation*}

Suppose that $x,y \in X$ and that $W$ is a number with $0<W<|w|$. Recall that $[x,y]$ denotes some choice of a geodesic from $x$ to $y$. We define

\begin{equation}\label{Definition of c_w}
    c_{w,W}([x,y])=d(x,y)-\inf\limits_{\alpha}\{|\alpha|-W|\alpha|_w\}
\end{equation}
\noindent where $\alpha$ ranges over all the paths from $x$ to $y$.  \revise{It follows from the definition that $c_{w,W}([x,y])$ does not depend on the choice of a geodesic $[x,y]$.
\begin{remark}\label{Rmk: RealPathNotExist}
    By choosing $\alpha$ to be a choice of geodesic $[x,y]$, one gets that $$c_{w,W}([x,y])\revise{\ge d(x,y)-(|[x,y]|-W|[x,y]|_w)=W|[x,y]|_w}\ge 0.$$ Moreover, if $c_{w,W}([x,y])=0$, then the above inequality implies that $|[x,y]|_w=0$ and the geodesic $[x,y]$ realizes the infimum in (\ref{Definition of c_w}). However,  if $c_{w,W}([x,y])>0$, then the realizing path (i.e. a path realizing the infimum in (\ref{Definition of c_w})) may not exist.
\end{remark}
}

\begin{lemma}\cite[Lemma 3.3]{Fuj98} \label{5.1}
Suppose that a path $\alpha$ realizes the infimum above. Then $\alpha$ is a $\left(\frac{|w|}{|w|-W}, \frac{2W|w|}{|w|-W}\right)$-quasi-geodesic.
\end{lemma}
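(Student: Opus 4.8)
\textbf{Proof proposal for Lemma \ref{5.1}.}

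The plan is to argue directly from the minimizing property of $\alpha$, by showing that any subsegment of $\alpha$ whose length is large compared to $|w|$ cannot carry ``too much'' $w$-content, and that short subsegments are automatically controlled. Write $\lambda = \frac{|w|}{|w|-W}$ and $\epsilon = \frac{2W|w|}{|w|-W}$ for the target constants. The key local estimate I would prove is: for any two points $x',y'$ on $\alpha$, if $\beta$ denotes the subpath of $\alpha$ between them, then $|\beta|_w \le \frac{d(x',y') + 2|w|}{|w|}$, or something of this shape; indeed, copies of $w$ inside $\beta$ do not overlap (except at vertices), so along $\beta$ they occupy disjoint blocks each of length $|w|$, hence at most $|\beta|/|w|$ full copies fit — but since $\beta$ need not itself be geodesic we instead want to bound $|\beta|_w$ in terms of $d(x',y')$. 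The trick, which I expect is the heart of Fujiwara's argument, is that because $\alpha$ realizes the infimum in (\ref{Definition of c_w}), the subpath $\beta$ must itself realize the corresponding infimum between $x'$ and $y'$ (otherwise one could splice in a better subpath and strictly decrease $|\alpha| - W|\alpha|_w$, a contradiction); equivalently $|\beta| - W|\beta|_w \le d(x',y') - W |[x',y']|_w \le d(x',y')$.

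From this sub-additivity/restriction property the quasi-geodesic bound follows by a counting argument: since each of the $|\beta|_w$ disjoint copies of $w$ inside $\beta$ contributes length exactly $|w|$, and the portions of $\beta$ not covered by these copies contribute the remaining $|\beta| - |w|\cdot|\beta|_w \ge 0$ of length, we have $|\beta| \ge |w|\cdot |\beta|_w$; combined with $|\beta| - W|\beta|_w \le d(x',y')$ this gives $|\beta|\left(1 - \frac{W}{|w|}\right) \le d(x',y')$, i.e. $|\beta| \le \frac{|w|}{|w|-W} d(x',y') = \lambda\, d(x',y')$. Parametrizing $\alpha$ by arc length then yields the upper Lipschitz-type bound $d(\alpha(s),\alpha(t)) \le |s-t|$ trivially and the lower bound $|s-t| \le \lambda\, d(\alpha(s),\alpha(t))$ from the displayed inequality applied to $x'=\alpha(s)$, $y'=\alpha(t)$; the additive constant $\epsilon$ enters when $\beta$ is permitted to be the first or last block and the ``no overlapping except at vertices'' convention forces us to discard up to two boundary copies of $w$, replacing $|\beta|_w$ by $|\beta|_w - 2$ in the worst case, which produces the $\frac{2W|w|}{|w|-W}$ correction.

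The main obstacle I anticipate is making rigorous the claim that $\beta$ inherits the minimizing property from $\alpha$: one has to be careful that the infimum in (\ref{Definition of c_w}) for the pair $(x',y')$ is taken over \emph{all} paths, not just subpaths of $\alpha$, and that the ``cut and paste'' operation — replacing $\beta$ inside $\alpha$ by a competitor path $\beta'$ from $x'$ to $y'$ — genuinely yields a path from $x$ to $y$ whose $w$-count is at least $|\alpha|_w - |\beta|_w + |\beta'|_w$ (one might lose or gain a bounded number of $w$-copies straddling the splice points $x',y'$, which is exactly where a bounded additive error is unavoidable and why the statement is a quasi-geodesic estimate rather than an exact one). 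Once the splicing inequality is pinned down with its $O(1)$ error, the rest is the elementary length-counting above, and I would present it in that order: (1) state and prove the splicing/restriction inequality for subpaths; (2) deduce $|\beta| \le \lambda\, d(x',y') + \epsilon$; (3) conclude that the arc-length parametrization of $\alpha$ is a $(\lambda,\epsilon)$-quasi-geodesic.
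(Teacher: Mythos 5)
Your proposal is correct and follows essentially the same route as the standard argument: the paper itself only cites \cite[Lemma~3.3]{Fuj98} for Lemma~\ref{5.1}, but its proof of the almost-realizing variant, Lemma~\ref{Lem: AlmostRealizing}, is exactly your splice-and-count scheme --- cut $\alpha$ at $x',y'$, replace the subpath $\beta$ by a geodesic, observe that at most two copies of $w$ straddling the cut points are destroyed (giving the claim $|\beta|-W(|\beta|_w+2)\le |\gamma|-W|\gamma|_w$), and combine with $|\beta|_w\le |\beta|/|w|$ to get $|\beta|\le \frac{|w|}{|w|-W}\,d(x',y')+\frac{2W|w|}{|w|-W}$. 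Your identification of the $O(1)$ splicing error as the source of the additive constant is precisely the point that must be pinned down, and once it is, the constants come out exactly as stated.
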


\revise{
Since a realizing path does not always exist, we need another notion which is close to realizing paths in practice.
A path $\beta$ between $x$ and $y$ is called an \textit{almost realizing path} of $c_{w,W}([x,y])$ if it satisfies that
\begin{equation}\label{Equ: AlmostReal}
    |\beta|-W|\beta|_w \le \min\{\inf_{\alpha}\{|\alpha|-W|\alpha|_w\}+W, \quad (d(x,y)+\inf_{\alpha}\{|\alpha|-W|\alpha|_w\})/2\}
\end{equation}
where $\alpha$ ranges over all the paths from $x$ to $y$. In other words, an almost realizing path $\beta$ of $c_{w,W}([x,y])$ satisfies that $$d(x,y)-(|\beta|-W|\beta|_w) \ge \max \{c_{w,W}([x,y])-W, \quad c_{w,W}([x,y])/2\}.$$
We remark that the requirement $d(x,y)-(|\beta|-W|\beta|_w) \ge c_{w,W}([x,y])-W$ guarantees $\beta$ is  a uniform quasi-geodesic (cf. Lemma \ref{Lem: AlmostRealizing} below) and the other requirement $d(x,y)-(|\beta|-W|\beta|_w) \ge c_{w,W}([x,y])/2$ is used to obtain $|\beta|_w>0$ when $c_{w,W}([x,y])>0$.
By Definition and Remark \ref{Rmk: RealPathNotExist}, an almost realizing path of $c_{w,W}([x,y])$ always exists. Analogous to Lemma \ref{5.1}, we have that
\begin{lemma}\label{Lem: AlmostRealizing}
    Let $\beta$ be an almost realizing path of $c_{w,W}([x,y])$. Then $\beta$ is a $\left(\frac{|w|}{|w|-W}, \frac{3W|w|}{|w|-W}\right)$-quasi-geodesic.
\end{lemma}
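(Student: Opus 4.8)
The plan is to run Fujiwara's argument for Lemma \ref{5.1} almost verbatim, while tracking where the defining slack of an almost realizing path in (\ref{Equ: AlmostReal}) enters; it will contribute exactly one extra additive term $W$ in the crucial estimate, replacing the constant $\frac{2W|w|}{|w|-W}$ by $\frac{3W|w|}{|w|-W}$. Only the first inequality in (\ref{Equ: AlmostReal}), namely $|\beta|-W|\beta|_w\le \inf_{\alpha}\{|\alpha|-W|\alpha|_w\}+W$ with $\alpha$ ranging over paths from $x$ to $y$, is needed here.

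First I would fix two points $p,q$ on $\beta$, write $\beta=\beta_1\beta'\beta_2$ where $\beta'$ is the subpath from $p$ to $q$, and form the competitor path $\beta''=\beta_1[p,q]\beta_2$ from $x$ to $y$. Since $|\beta''|-W|\beta''|_w\ge \inf_{\alpha}\{|\alpha|-W|\alpha|_w\}$ by definition of the infimum in (\ref{Definition of c_w}), the almost-realizing inequality gives
$$|\beta|-W|\beta|_w\le |\beta''|-W|\beta''|_w+W.$$
Next I would insert the bookkeeping across the two cut points: $|\beta''|=|\beta_1|+d(p,q)+|\beta_2|$; the non-overlapping families realizing $|\beta_1|_w$ and $|\beta_2|_w$ together form a non-overlapping family inside $\beta''$, so $|\beta''|_w\ge |\beta_1|_w+|\beta_2|_w$; and a maximal non-overlapping family of copies of $w$ in $\beta$ loses at most one copy at $p$ and at most one at $q$ when split along $\beta_1,\beta',\beta_2$, so $|\beta|_w\le |\beta_1|_w+|\beta'|_w+|\beta_2|_w+2$. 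Substituting these into the displayed inequality, the $|\beta_1|$, $|\beta_2|$, $|\beta_1|_w$, $|\beta_2|_w$ terms cancel and one is left with $|\beta'|\le d(p,q)+W|\beta'|_w+3W$. Finally, since the copies counted by $|\beta'|_w$ have pairwise disjoint interiors and each has length $|w|$, one has $|\beta'|_w\le |\beta'|/|w|$, whence $|\beta'|\bigl(1-\tfrac{W}{|w|}\bigr)\le d(p,q)+3W$, i.e. $|\beta'|\le \frac{|w|}{|w|-W}\,d(p,q)+\frac{3W|w|}{|w|-W}$. As $p,q$ were arbitrary, this bound on every subpath (together with the trivial bound $d(p,q)\le |\beta'|$) shows that $\beta$, parametrized by arclength, is a $\bigl(\frac{|w|}{|w|-W},\frac{3W|w|}{|w|-W}\bigr)$-quasi-geodesic.

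The main (and essentially only) point requiring care is the combinatorial bookkeeping that already underlies Lemma \ref{5.1}: one must verify that cutting a maximal non-overlapping family of $w$-copies at the two interior points $p,q$ destroys at most two copies, and that the families realizing $|\beta_1|_w$ and $|\beta_2|_w$ reassemble without overlap inside $\beta''$, which holds because $\beta_1$ and $\beta_2$ are disjoint subpaths of $\beta''$ separated by the inserted geodesic $[p,q]$. Everything else is a routine transcription of Fujiwara's computation, with the single extra $+W$ coming from (\ref{Equ: AlmostReal}).
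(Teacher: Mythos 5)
Your argument is correct and is essentially the paper's own proof: the paper states the key estimate $|\beta'|-W(|\beta'|_w+3)\le |\gamma|-W|\gamma|_w$ as a claim proved by contradiction using the competitor path $\gamma'=\beta|_{[0,t]}\cup[p,q]\cup\beta|_{[s,|\beta|]}$, the splitting bound $|\beta|_w\le|\beta_1|_w+|\beta'|_w+|\beta_2|_w+2$, and only the first inequality in (\ref{Equ: AlmostReal}), exactly as you do. The only difference is that you derive the inequality directly rather than by contradiction, and both versions finish identically with $|\beta'|_w\le|\beta'|/|w|$.
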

\begin{proof}
    Let $\beta: [0,|\beta|]\to X$ be an arc-length parametrization of $\beta$. Let $0\le t< s\le |\beta|$ and set $\beta'=\beta|_{[t,s]}$. Note that $|\beta'|=s-t$. Let $\gamma$ be a geodesic from $\beta(t)$ to $\beta(s)$.
    \begin{claim}
        $|\beta'|-W(|\beta'|_w+3)\le |\gamma|-W|\gamma|_w.$
    \end{claim}
    \begin{proof}[Proof of Claim]
        Suppose to the contrary that $|\beta'|-W(|\beta'|_w+3)> |\gamma|-W|\gamma|_w$. Since $\beta$ is an almost realizing path, $|\beta|-W|\beta|_w \le \inf_{\alpha}\{|\alpha|-W|\alpha|_w\}+W$. By setting $\gamma'=\beta|_{[0,t]}\cup \gamma \cup \beta|_{[s,|\beta|]}$, one has that
        \begin{align*}
            |\gamma'|-W|\gamma'|_w& \le (t+|\gamma|+|\beta|-s)-W(\mid \beta|_{[0,t]}\mid_w+|\gamma|_w+\mid\beta|_{[s,|\beta|]}\mid_w)\\
            &< |\beta|-W(\mid \beta|_{[0,t]}\mid_w+|\beta'|_w+\mid\beta|_{[s,|\beta|]}\mid_w+3)\le |\beta|-W(|\beta|_w+1)\le \inf_{\alpha}\{|\alpha|-W|\alpha|_w\}.
        \end{align*}
        This is impossible since $\gamma'$ is also a path from $x$ to $y$.
    \end{proof}
    Clearly $|\beta'|_w\le |\beta'|/|w|$. Therefore,
    \begin{align*}
        d(\beta(t),\beta(s))=|\gamma|\ge |\gamma|-W|\gamma|_w\ge |\beta'|-W|\beta'|_w-3W\ge |\beta'|-\frac{W}{|w|}|\beta'|-3W=\frac{|w|-W}{|w|}|\beta'|-3W.
    \end{align*}
    The proof is complete.
\end{proof}
}

It is clear from the definition that $c_{w,W}([x, y])=c_{w^{-1},W}([y, x])$. We then define  $$h_{w,W}([x,y])=c_{w,W}([x,y])-c_{w^{-1},W}([x,y]).$$

Take $o\in X$ as a base point. We define functions $c_{w,W}$ and $h_{w,W}: G \rightarrow \mathbb{R}$ by
$$c_{w,W}(g):=c_{w,W}([o, g\cdot o]).$$
$$h_{w,W}(g):=h_{w,W}([o, g\cdot o]).$$

\begin{lemma}\cite[Proposition 3.10]{Fuj98}\label{Lem: UniformDefect}
    The map $h_{w,W}: G\to \mathbb R$ is a quasimorphism. Moreover, the defect $\Delta (h_{w,W}) \leq 12L_0+6W+48\delta$  is uniformly bounded where $L_0=L(\frac{|w|}{|w|-W}, \frac{2W|w|}{|w|-W},\delta)$ is given by Lemma \ref{Lem: MorseLemma}.
\end{lemma}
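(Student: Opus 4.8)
The plan is to prove the bound on $\Delta(h_{w,W})$ directly, by reducing the quasimorphism identity to an estimate on geodesic triangles and then exploiting $\delta$-hyperbolicity together with the quasi-geodesic property of almost realizing paths from Lemma \ref{Lem: AlmostRealizing}. First I would record the $G$-invariance of the construction: an isometry of $X$ carries geodesics to geodesics, preserves lengths of paths, and carries copies of $w$ to copies of $w$, so $c_{w,W}([ga,gb])=c_{w,W}([a,b])$ and hence $h_{w,W}([ga,gb])=h_{w,W}([a,b])$ for every $g\in G$. In particular $h_{w,W}(h)=h_{w,W}([go,gho])$, so for any $g,h\in G$,
\[
h_{w,W}(gh)-h_{w,W}(g)-h_{w,W}(h)=h_{w,W}([x,z])-h_{w,W}([x,y])-h_{w,W}([y,z])
\]
with $x=o$, $y=go$, $z=gho$. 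Thus it suffices to bound the right-hand side uniformly over all geodesic triangles $(x,y,z)$ in $X$.

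The technical heart is a \emph{quasi-additivity} lemma for $c_{w,W}$ along a geodesic: if $p$ lies on a geodesic $[x,z]$, then
\[
0\ \le\ c_{w,W}([x,z])-c_{w,W}([x,p])-c_{w,W}([p,z])\ \le\ 2W+4L_0 .
\]
The lower bound is loss-free and uses no hyperbolicity: concatenating near-optimal competitor paths for $[x,p]$ and $[p,z]$ shows that the infimum in the definition of $c_{w,W}$ is subadditive, and $d(x,p)+d(p,z)=d(x,z)$ because $p\in[x,z]$. For the upper bound I would take an almost realizing path $\beta$ of $c_{w,W}([x,z])$; by Lemma \ref{Lem: AlmostRealizing} it is a uniform quasi-geodesic, hence by the Morse Lemma \ref{Lem: MorseLemma} it stays within $L_0$ of $[x,z]$, so there is $p'\in\beta$ with $d(p,p')\le L_0$. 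Cutting $\beta$ at $p'$ destroys at most one copy of $w$, and bounding each half below by the relevant infimum gives $c_{w,W}([x,z])\le c_{w,W}([x,p'])+c_{w,W}([p',z])+2W$; one then passes back from $p'$ to $p$ by the elementary one-variable Lipschitz bound $|c_{w,W}([u,p])-c_{w,W}([u,q])|\le 2\,d(p,q)$ (extend any path by the geodesic $[p,q]$; this never decreases the number of copies of $w$).

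With this lemma in hand I would assemble the triangle estimate using $\delta$-thinness. Pick $p\in[x,z]$ within $\delta$ of both $[x,y]$ and $[y,z]$, and $p_1\in[x,y]$, $p_2\in[y,z]$ with $d(p,p_i)\le\delta$, so $d(p_1,p_2)\le 2\delta$. Applying the quasi-additivity lemma to $[x,z]$ split at $p$, to $[x,y]$ split at $p_1$, and to $[y,z]$ split at $p_2$, and using the Lipschitz bound to replace $c_{w,W}([x,p])$ by $c_{w,W}([x,p_1])$ and $c_{w,W}([p,z])$ by $c_{w,W}([p_2,z])$, the endpoint terms cancel and one obtains
\[
c_{w,W}([x,z])-c_{w,W}([x,y])-c_{w,W}([y,z])=-c_{w,W}([p_1,y])-c_{w,W}([y,p_2])+E
\]
with $|E|$ of order $O(W+L_0+\delta)$. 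Running the identical computation with $w^{-1}$ in place of $w$ (the same split points work) and subtracting, the orientation-reversal identity $c_{w,W}([a,b])=c_{w^{-1},W}([b,a])$ turns $h_{w,W}([y,p_2])$ into $-h_{w,W}([p_2,y])$, so the remaining term is $-(h_{w,W}([p_1,y])-h_{w,W}([p_2,y]))$ up to the bounded error; since $d(p_1,p_2)\le 2\delta$, the Lipschitz bound makes this remainder $O(\delta)$. Keeping track of exactly how often $L_0$, $W$, and $\delta$ enter then yields the asserted bound $\Delta(h_{w,W})\le 12L_0+6W+48\delta$.

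I expect the main obstacle to be the quasi-additivity estimate of the second step: controlling an almost realizing path requires the Morse Lemma, and one must be careful with the slack built into the definition of an almost realizing path and with the at-most-one-copy-of-$w$ loss when cutting at $p'$. It is also worth emphasizing why the proof uses $h_{w,W}=c_{w,W}-c_{w^{-1},W}$ rather than $c_{w,W}$ alone: the leftover terms $c_{w,W}([p_1,y])+c_{w,W}([y,p_2])$ are genuinely unbounded (they grow like the Gromov product $(x\mid z)_y$), and it is only their antisymmetrization that cancels.
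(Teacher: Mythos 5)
The paper does not prove this lemma itself --- it is quoted from Fujiwara \cite[Proposition 3.10]{Fuj98} --- and your argument is essentially a reconstruction of Fujiwara's original proof: reduce to geodesic triangles by $G$-invariance, prove quasi-additivity of $c_{w,W}$ along a geodesic via the Morse Lemma applied to an (almost) realizing path, split the three sides of a $\delta$-thin triangle at nearby points, and observe that the unbounded leftover terms near the vertex $y$ cancel only after the antisymmetrization $h_{w,W}=c_{w,W}-c_{w^{-1},W}$. All the individual steps check out: the subadditivity lower bound, the ``at most one copy destroyed by cutting'' estimate, the $2$-Lipschitz dependence of $c_{w,W}$ on each endpoint, and the identity $h_{w,W}([y,p_2])=-h_{w,W}([p_2,y])$ are all correct, so the substantive conclusion (a uniform defect bound of the form $aL_0+bW+c\delta$) is established, which is all the paper ever uses. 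Two bookkeeping caveats: first, carrying out your own accounting gives roughly $16L_0+8W+16\delta$ rather than the stated $12L_0+6W+48\delta$, so the final sentence ``keeping track \dots\ yields the asserted bound'' is not literally delivered by the estimates as written; second, since realizing paths need not exist you correctly work with almost realizing paths, but these are $\left(\frac{|w|}{|w|-W},\frac{3W|w|}{|w|-W}\right)$-quasi-geodesics (Lemma \ref{Lem: AlmostRealizing}) rather than $\left(\frac{|w|}{|w|-W},\frac{2W|w|}{|w|-W}\right)$-quasi-geodesics, so the Morse constant you should quote is the slightly larger $L\left(\frac{|w|}{|w|-W},\frac{3W|w|}{|w|-W},\delta\right)$ instead of the $L_0$ appearing in the statement. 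Neither issue affects any application in the paper, but if you want the exact constant of the lemma you would need to either tighten the additive losses or simply accept a uniform bound with different coefficients.
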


\subsection{Constructing infinitely many words}
From now on, we borrow some definitions and terminologies from \cite{BF02}. For a base point $o\in X$ and a loxodromic element $g$, \revise{we denote by $L_g=\cup_{i\in \Z}g^{i}[o,go]$ a quasi-axis of $g$. Define $\ax(g):=\cup_{k\ge 1}L_{g^k}$. By Morse Lemma, $\ax(g)$ is still a quasi-axis of $g$. The} quasi-axis $\ax(g)$ of $g$ is oriented by the requirement that $g$ acts as a positive translation. We call this orientation the \textit{$g$-orientation} of the quasi-axis. Of course, the $g^{-1}$-orientation is the opposite of the $g$-orientation. Let $\ax(g)$ be a $(\lambda,\epsilon)$-quasi-axis. By Morse Lemma, any two $(\lambda, \epsilon)$-quasi-axes of $g$ are within $L(\lambda, \epsilon,\delta)$ of each other. More generally, any sufficiently long  \revise{path} $J$ inside the $L(\lambda, \epsilon, \delta)$-neighborhood of $\ax(g)$ of $g$ has a natural orientation given by $g$: a point of $\ax(g)$ within $L(\lambda, \epsilon, \delta)$ of the terminal endpoint of $J$ is ahead (with respect to the $g$-orientation of $\ax(g)$) of a point of $\ax(g)$ within $L(\lambda, \epsilon, \delta)$ of the initial
endpoint of $J$ . We call this orientation of $J$ the \textit{$g$-orientation}.

\begin{definition}\label{Def: Equivalence}
Let $g_1$ and $g_2$ be two loxodromic elements of $G$.  We will write
$$
g_1 \sim g_2
$$
if there exists a constant $L'>0$ such that an arbitrarily long segment $J$ in $\ax(g_1)$  is contained in an $L'$-neighborhood of $t\ax(g_2)$ for some $t\in G$ and the map $t: J\to t(J)$ is orientation-preserving with respect to the $g_1$-orientation on $J$ and the $g_2$-orientation on $t(J)$.

\end{definition}

Note that $\sim$ is an equivalence relation. The following lemma gives a relation between the above definition and the definition (cf. Definition \ref{Def: Barriers}) of barriers which has nothing to do with the orientation.
\begin{lemma}\label{Lem: NotEquiImplyBarFree}
    If $g_1\nsim g_2^{\pm 1}$, then \revise{for any $\epsilon'>0$,} there exists $r>0$ such that $g_2^{\revise{m}}$ is $(\epsilon', g_1^{\revise{s}})$-barrier-free for any ${\revise{m}}\in \mathbb Z$ \revise{and $s\ge r$.} 
\end{lemma}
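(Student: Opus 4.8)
The plan is to prove the contrapositive-style statement directly: assuming $g_1 \nsim g_2^{\pm 1}$, suppose for contradiction that for some $\epsilon' > 0$ no such $r$ exists, i.e. there are arbitrarily large exponents $s$ and exponents $m \in \mathbb{Z}$ for which a geodesic $[o, g_2^m o]$ contains an $(\epsilon', g_1^s)$-barrier. By definition of a barrier, this means there is $t \in G$ with $d(to, [o, g_2^m o]) \le \epsilon'$ and $d(tg_1^s o, [o, g_2^m o]) \le \epsilon'$. First I would record the basic geometric setup: both $\ax(g_1)$ and $\ax(g_2)$ are uniform quasi-geodesics (say $(\lambda,\epsilon)$-quasi-axes) in the $\delta$-hyperbolic space $X$, so by the Morse Lemma (Lemma \ref{Lem: MorseLemma}) a geodesic $[o, g_2^m o]$ lies uniformly close to $\ax(g_2)$; the constants involved depend only on $\lambda, \epsilon, \delta$. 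Thus the two points $to$ and $tg_1^s o$ lie within a uniform constant $L'' = L'' (\lambda, \epsilon, \delta, \epsilon')$ of $\ax(g_2)$.

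Next I would use the fact that $to$ and $tg_1^s o$ are the initial and terminal points of the segment $t[o, g_1^s o]$, which is a subsegment of $t\,\ax(g_1)$. Since $[o, g_1^s o]$ is a $(\lambda,\epsilon)$-quasi-geodesic and $s \to \infty$, this segment is arbitrarily long. By hyperbolicity (thin quadrilaterals / the standard fact that a quasi-geodesic with endpoints close to another quasi-geodesic fellow-travels it on the bulk of its length), the segment $t[o, g_1^s o]$ lies within a uniform neighborhood of $\ax(g_2)$ except possibly near its two endpoints — and for $s$ large, the "bulk'' is arbitrarily long. Pulling back by $t^{-1}$: an arbitrarily long subsegment $J$ of $\ax(g_1)$ is carried by $t$ into a uniform neighborhood of $\ax(g_2)$. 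This is most of the definition of $g_1 \sim g_2$ or $g_1 \sim g_2^{-1}$; the only remaining point is the orientation.

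The orientation is handled by a pigeonhole / two-cases argument: for each large $s$ in our (infinite) bad set, the map $t = t_s : J_s \to t_s(J_s)$ is either orientation-preserving or orientation-reversing with respect to the $g_1$-orientation on $J_s$ and the $g_2$-orientation on $t_s(J_s)$ (well-defined once $J_s$ is long enough, since a long path in a uniform neighborhood of $\ax(g_2)$ has a well-defined $g_2$-orientation). Infinitely many of the $s$ give the same one of the two cases; along that subsequence, the lengths $|J_s| \to \infty$, so we get arbitrarily long segments of $\ax(g_1)$ mapped by group elements $t_s$, orientation-consistently, into a fixed uniform neighborhood of $\ax(g_2)$. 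Taking $L'$ to be that uniform neighborhood constant, this says exactly $g_1 \sim g_2$ (preserving case) or $g_1 \sim g_2^{-1}$ (reversing case), contradicting $g_1 \nsim g_2^{\pm 1}$. Hence the desired $r$ exists. Note the uniformity in $m$ is automatic: no constant in the argument depends on $m$, only on $\lambda, \epsilon, \delta, \epsilon'$ and the fixed elements $g_1, g_2$.

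The main obstacle I anticipate is the bookkeeping around the endpoints of the segment $t[o, g_1^s o]$: hyperbolicity only guarantees fellow-traveling away from the endpoints, so one must be careful that the portion of $t(\ax(g_1))$ genuinely landing in a uniform neighborhood of $\ax(g_2)$ is still arbitrarily long as $s \to \infty$ — this is where having $s$ unbounded (rather than $m$ unbounded) is essential, and it is exactly why the statement asks for $s \ge r$ with $r$ large rather than a bound on $m$. A secondary technical point is checking that the $g_2$-orientation is genuinely well-defined on $t_s(J_s)$ for $s$ large, which requires $|J_s|$ to exceed the threshold implicit in the discussion of orientations on long paths near $\ax(g_2)$ preceding Definition \ref{Def: Equivalence}; this is ensured by passing to the tail of the bad set where $|J_s|$ is as large as we like.
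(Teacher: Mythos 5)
Your proposal is correct and follows essentially the same route as the paper: assume a barrier exists for arbitrarily large $s$, use the Morse Lemma to place the long geodesic $t[o,g_1^s o]$ (whose length is bounded below by $s\Vert g_1\Vert$) in a uniform neighborhood of $[o,g_2^m o]$ and hence of $\ax(g_2)$, and conclude $g_1\sim g_2^{\pm 1}$, a contradiction. The paper realizes the "thin quadrilateral" step by explicitly forming the $(1,4\epsilon')$-quasi-geodesic $[x,to]\cup[to,tg_1^so]\cup[tg_1^so,y]$ and compresses your pigeonhole on orientation into "up to replacing $g_2$ by $g_2^{-1}$," but these are only presentational differences.
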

\begin{proof}
    Suppose to the contrary that \revise{there exists $\epsilon'>0$ such that} for every $r\in \mathbb N$, there \revise{exist $m\in \Z$ and $s\ge r$} such that $g_2^{\revise{m}}$ is not $(\epsilon', g_1^{\revise{s}})$-barrier-free. According to the definition of barriers, there exists $t\in G$ such that $d(to,[o,g_2^{\revise{m}}o])\le \epsilon'$ and $d(tg_1^{\revise{s}}o,[o,g_2^{\revise{m}}o])\le \epsilon'$. \rev{See Figure \ref{Fig8} for an illustration.}

    \begin{figure}[ht]
  \centering
  \begin{tikzpicture}
  \draw[fill=gray!10] (0,0) ellipse (6cm and 2cm);
\draw [black] (-4,0)--(4,0);
\filldraw [red] (-3,1) circle (1pt) node[black, above] {$to$};
\filldraw [red] (3,1) circle (1pt) node[black, above] {$tg_1^so$};
\filldraw [red] (4,0) circle (1pt) node[black, below] {$g_2^mo$};
\filldraw [red] (-4,0) circle (1pt) node[black, below] {$o$};
\draw [blue] (-3,1) .. controls (0,0.2) .. (3,1);
\draw [blue] (-3,1)--(-3,0);
\draw [blue] (3,1)--(3,0);
\filldraw [red] (3,0) circle (1pt) node[black, below] {$y$};
\filldraw [red] (-3,0) circle (1pt) node[black, below] {$x$};
\draw (-3,0.4) node[black,left] {$\leq \epsilon'$};
\draw (3,0.4) node[black,right] {$\leq \epsilon'$};
\draw (0,0.5) node[blue,above] {$\gamma$};
\draw (0,-1) node[black,above] {$N_{L'}([o,g_2^mo])$};
\end{tikzpicture}
  \caption{$[o,g_2^mo]$ contains an $(\epsilon',g_1^s)$-barrier; The blue path represents $\gamma$; The gray area represents $N_{L'}([o,g_2^mo])$}\label{Fig8}
\end{figure}

    Let $x,y\in [o,g_2^{\revise{m}}o]$ such that $d(to,x)=d(to,[o,g_2^{\revise{m}}o])$ and $d(tg_1^{\revise{s}}o,y)=d(tg_1^{\revise{s}}o,[o,g_2^{\revise{m}}o])$. Hence, the path $\gamma=[x,to]\cup [to,tg_1^{\revise{s}}o]\cup [tg_1^{\revise{s}}o,y]$ is a $(1,\revise{4}\epsilon')$-quasi-geodesic. By Lemma \ref{Lem: MorseLemma}, $t[o,g_1^{\revise{s}}o]\subset N_{L'}([o,g_2^{\revise{m}}o])$ \revise{where $L'=L(1,4\epsilon',\delta)$}. Up to exchanging $g_2$ to $g_2^{-1}$, we can assume that the map $t$ is orientation-preserving. Note that $s\ge r$. By Lemma \ref{Lem: StableLength}, one has $d(o,g_1^so)\ge s\|g_1\|\ge r\|g_1\|$, which implies that the length of $[o,g_1^so]\subseteq \ax(g_1)$ goes to infinity as $r\to \infty$. According to Definition \ref{Def: Equivalence}, this shows that $g_1\sim g_2$ or $g_1\sim g_2^{-1}$ which is impossible.

\end{proof}

\cite[Proposition 6]{BF02} shows that \revise{for every WPD action $G\curvearrowright X$,} there exist two  loxodromic elements $g_1$ and $g_2$ \revise{such that $g_1\nsim g_2$.}
\revise{In \cite[Proposition 2, Claim 1]{BF02}, Bestvina-Fujiwara proved that two elements $f_1, f_2$ satisfying the following constructions are non-equivalent: $$f_1=g_1^{n_1}g_2^{m_1}g_1^{k_1}g_2^{-l_1}, f_2=g_1^{n_2}g_2^{m_2}g_1^{k_2}g_2^{-l_2},$$ where $0\ll n_1\ll m_1\ll k_1\ll l_1\ll n_2\ll m_2\ll k_2\ll l_2$. But in fact, their proof only requires $0\ll n_1,m_1,|k_1|,l_1\ll n_2,m_2,|k_2|,l_2$, and does not require $k_1,k_2$ to be positive. So we can take $k_1,l_1, k_2,l_2$ to be $-n_1, m_1, -n_2, m_2$ respectively. In this case, $f_1=g_1^{n_1}g_2^{m_1}g_1^{-n_1}g_2^{-m_1}, f_2=g_1^{n_2}g_2^{m_2}g_1^{-n_2}g_2^{-m_2}\in [G,G]$ and the remaining proof of \cite[Proposition 2, Claim 1]{BF02} shows that }
\begin{lemma}\label{Lem: TwoIndependentElements}
There exist two  loxodromic elements $g_1$ and $g_2$ in $[G,G]$ on $X$ such that $g_1
\nsim g_2$.
\end{lemma}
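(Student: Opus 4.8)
The plan is to invoke the existing result of Bestvina–Fujiwara \cite[Proposition 6]{BF02}, which under the WPD hypothesis yields two loxodromic elements $g_1', g_2'$ on $X$ with $g_1'\nsim g_2'$, and then to massage these into elements lying in $[G,G]$ while preserving non-equivalence. The natural candidates are the ``nested commutator'' words of the form $f_j = (g_1')^{n_j}(g_2')^{m_j}(g_1')^{-n_j}(g_2')^{-m_j}$ for suitable exponents; each $f_j$ is manifestly a commutator, hence lies in $[G,G]$, and each is loxodromic provided the exponents are chosen large enough (a long admissible/fellow-traveling path argument shows that such a word tracks a genuine quasi-axis).

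First I would recall the construction from \cite[Proposition 2, Claim 1]{BF02}, where Bestvina–Fujiwara consider words of the more general shape $g_1^{n}g_2^{m}g_1^{k}g_2^{-l}$ with a strong separation of scales $0\ll n\ll m\ll k\ll l$ and prove that two such words with disjoint exponent ranges are non-equivalent. The key observation — which I would make explicit — is that their proof of non-equivalence never uses the positivity of $k$, nor the full chain of inequalities: all that is really needed is that each of $n, m, |k|, l$ for the first word is much smaller than each of the corresponding quantities for the second word (to control overlaps of long subsegments of the respective quasi-axes). Granting this, one is free to specialize $k = -n$ and $l = m$ in each word, turning $g_1^{n}g_2^{m}g_1^{k}g_2^{-l}$ into the commutator $[g_1^{n}, g_2^{m}] = g_1^{n}g_2^{m}g_1^{-n}g_2^{-m}$, which lies in $[G,G]$.

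The steps in order: (1) apply \cite[Proposition 6]{BF02} to get $g_1', g_2'$ loxodromic with $g_1'\nsim g_2'$; (2) pick exponents $0 \ll n_1, m_1 \ll n_2, m_2$ and set $g_j := (g_1')^{n_j}(g_2')^{m_j}(g_1')^{-n_j}(g_2')^{-m_j}$ for $j=1,2$; (3) verify that each $g_j$ is loxodromic — by the Extension Lemma / admissible path machinery (Lemma \ref{Lem: ExtensionLemma}, Proposition \ref{PROP: Fellow Travel}), the bi-infinite concatenation of translates of $[o, g_j o]$ is a quasi-geodesic when the exponents are large, so $\Vert g_j\Vert > 0$; (4) re-run the overlap estimate from \cite[Proposition 2, Claim 1]{BF02} to conclude $g_1 \nsim g_2$; and (5) note $g_1, g_2 \in [G,G]$ by construction.

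I expect the main obstacle — and the only place requiring genuine care — is step (4): checking that the Bestvina–Fujiwara non-equivalence argument really does go through under the weakened hypotheses ($k_j$ possibly negative, and only the coarse separation $0\ll n_1,m_1,|k_1|,l_1\ll n_2,m_2,|k_2|,l_2$ rather than the full strict chain). This amounts to re-examining their comparison of long subsegments of $\ax(g_1)$ against $G$-translates of $\ax(g_2)$: the point is that the ``syllable structure'' of $g_j$ (alternating blocks along the $g_1'$- and $g_2'$-axes, whose relative lengths are pinned down by the exponents) is rigid enough that a long overlap would force a matching of syllables, and since $g_1'\nsim g_2'$ no orientation-compatible matching across the two scales can occur. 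Everything else (membership in $[G,G]$, loxodromicity) is routine given the results already assembled above, so I would present step (4) carefully and treat the rest briefly.
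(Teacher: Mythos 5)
Your proposal matches the paper's argument essentially verbatim: the paper also starts from \cite[Proposition 6]{BF02} to get $g_1\nsim g_2$, forms the commutators $g_1^{n_j}g_2^{m_j}g_1^{-n_j}g_2^{-m_j}$ by specializing $k_j=-n_j$, $l_j=m_j$ in the words of \cite[Proposition 2, Claim 1]{BF02}, and relies on exactly the same observation that the non-equivalence argument there only needs $0\ll n_1,m_1,|k_1|,l_1\ll n_2,m_2,|k_2|,l_2$ with no positivity of $k_j$. The approach and the key observation are the same, so there is nothing further to compare.
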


Since $g_1$ and $g_2$ are independent, we may replace $g_1, g_2$ by high positive powers of conjugates to ensure that the subgroup $F$ of $G$ generated by $g_1, g_2$ is free with basis $S=\{g_1, g_2\}$, each nontrivial element of $F$ is loxodromic, and $F$ is quasi-convex with respect to the action on $X$ (see \revise{\cite[Proposition 4.3]{Fuj98}}). We will call such free subgroups \textit{Schottky groups}. Let $\mathcal G(F,S)$ be the Cayley graph of $F$ with respect to the generating set $S=\{g_1, g_2\}$. Then $\mathcal G(F,S)$ is a tree and each oriented edge has a label $g_i^{\pm 1}$ . Choose a base point $o\in X$ and construct an $F$-equivariant map $\Phi: \mathcal G(F,S)\to X$ that sends $1$ to $o$ and sends each edge to a geodesic arc. Quasi-convexity implies that $\Phi$ is a $(\lambda_0, \epsilon_0)$-quasi-isometric embedding for some $\lambda_0\ge 1, \epsilon_0\ge 0$ and in particular for every $1 \neq f \in F$ the $\Phi$-image of the axis of $f$ in $\mathcal G(F,S)$ is a $(\lambda_0, \epsilon_0)$-quasi-axis of $f$ in $X$.

Choose positive constants $$0\ll n_1\ll m_1\ll k_1\ll l_1\ll n_2\ll m_2\ll \cdots$$
and define $$f_i=g_1^{n_i}g_2^{m_i}g_1^{k_i}g_2^{-l_i}$$

for $i = 1, 2, 3, \ldots$.
\begin{proposition}\cite[Proposition 2]{BF02}\label{Prop: InfWords}
    $\{f_i:i\ge 1\}$ is an infinite sequence of loxodromic elements in $G$ such that
    \begin{enumerate}
        \item $f_i\nsim f_i^{-1}$ for $i\ge 1$, and
        \item $f_i\nsim f_j^{\pm 1}$ for $j<i$.
    \end{enumerate}
\end{proposition}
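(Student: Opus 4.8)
\medskip
\noindent\textbf{Proof proposal.}
Each $f_i$ is a nonempty reduced word in $g_1, g_2$, and by construction every nontrivial element of the Schottky group $F$ acts loxodromically on $X$, so the $f_i$ are automatically loxodromic; the real content is the non-equivalences (1) and (2). The plan is to reduce the geometric relation $\sim$ of Definition \ref{Def: Equivalence} to an elementary combinatorial comparison of the bi-infinite periodic words $\mathcal W_i := \cdots f_i f_i f_i \cdots$. Since $f_i$ is cyclically reduced, the $\Phi$-image of the bi-infinite geodesic in $\mathcal G(F,S)$ carrying $\mathcal W_i$ is a $(\lambda_0,\epsilon_0)$-quasi-axis of $f_i$; I would cut it into maximal \emph{syllables} — maximal subpaths whose edges all bear a single label $g_s^{\pm1}$ — and record the ordered sequence of syllable lengths. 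Read with the $f_i$-orientation this sequence cyclically repeats $(n_i,m_i,k_i,l_i)$, and with the $f_i^{-1}$-orientation it cyclically repeats $(l_i,k_i,m_i,n_i)$. The point is that the $\Phi$-image of a syllable $g_s^{N}$ uniformly fellow-travels the orbit arc $o, g_s o,\dots, g_s^{N}o$ of the $\langle g_s\rangle$-quasi-axis, so the syllable-length sequence is a \emph{metric} feature of the quasi-axis, hence unchanged by the conjugating isometry $t$ in Definition \ref{Def: Equivalence}. Combining this with the Morse Lemma and the quasi-convexity of $F$, I expect to conclude: if $f_i\sim f_j^{\varepsilon}$ with $\varepsilon\in\{\pm1\}$, then every sufficiently long window of the $f_i$-oriented syllable sequence of $\mathcal W_i$ occurs as a window of the $f_j^{\varepsilon}$-oriented syllable sequence of $\mathcal W_j$, and any \emph{interior} syllable of such a window coincides with a maximal syllable of $\mathcal W_j$, so has length in $\{n_j, m_j, k_j, l_j\}$.

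Granting that reduction, (1) and (2) are immediate from $0\ll n_1\ll m_1\ll k_1\ll l_1\ll n_2\ll m_2\ll\cdots$. For (1): as $n_i<m_i<k_i<l_i$ are distinct, $(n_i,m_i,k_i,l_i)$ is not a cyclic rotation of $(l_i,k_i,m_i,n_i)$, so no long window of the $f_i$-oriented syllable sequence of $\mathcal W_i$ can appear in its $f_i^{-1}$-oriented version, whence $f_i\nsim f_i^{-1}$. For (2): if $j<i$ then every syllable of $\mathcal W_j$ has length in $\{n_j,m_j,k_j,l_j\}$, all $<n_i$, whereas any window of $\mathcal W_i$ covering two full periods has an interior syllable of length $\ge n_i$; this is incompatible with the reduction, so $f_i\nsim f_j^{\pm1}$.

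The step I expect to be the main obstacle is the reduction itself: making rigorous that the syllable decomposition is a coarse invariant surviving both the bounded fellow-travelling permitted by $\sim$ and the isometry $t$, and — crucially — that interior syllable lengths are matched \emph{exactly}, not merely up to an error comparable to the fellow-travelling constant. The way around this is the rigidity of these very special quasi-geodesics (bounded-turn concatenations of long $\langle g_1\rangle$- and $\langle g_2\rangle$-orbit arcs) together with the freedom to choose the exponents growing so rapidly that every syllable to be compared dwarfs all the ambient constants ($\delta$, $\lambda_0$, $\epsilon_0$, the Morse constant of Lemma \ref{Lem: MorseLemma}, and the indices $[E(g_s):\langle g_s\rangle]$), which forces its length. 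This is exactly the argument of \cite[Proposition 2]{BF02}, which I would cite.
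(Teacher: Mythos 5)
Your proposal is essentially the paper's own treatment: the paper offers no proof of this proposition beyond the citation to \cite[Proposition 2]{BF02}, and your syllable-comparison sketch is a faithful outline of that cited argument (the exponents are chosen to grow so rapidly that they dwarf all fellow-travelling and quasi-isometry constants, which forces the coarse matching of syllable patterns and yields the contradiction), ending with the same citation. No gap to report.
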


If $f \in F$ is cyclically reduced as a word in $g_1, g_2$ (equivalently, if its axis passes through $1 \in \mathcal G(F,S)$ ) then by the quasi-convexity of $F$ in $G$ we have
\begin{equation}\label{Quasi-convexity}
    d(o, f^\revise{m}(o)) \ge \revise{m}(d(o, f(o)) - 2L_1)
\end{equation}
where $L_1 = L(\lambda_0, \epsilon_0, \delta) > 0$ is given by Lemma \ref{Lem: MorseLemma} and is a constant independent of $f$ and $\revise{m}$.

\subsection{Constructing infinitely many quasimorphisms}
From now on, we fix an integer $W\ge 3L_1$ and will only consider \revise{a} path $w$ with $|w| > W$ . Thus, \revise{an almost realizing path $\alpha$ as in Lemma \ref{Lem: AlmostRealizing}}  will be a quasi-geodesic with constants independent
of $w$ and the endpoints. Moreover, $\alpha$ is contained in a uniform neighborhood, say, $L_2$-neighborhood, of any geodesic joining the endpoints of $\alpha$. We will also omit $W$
from the notation and write $c_w$ and $h_w$ for simplicity.

The next lemma is crucial for our discussion, not only in absolute bounded cohomology but also in the relative case. Recall the definition of barriers from Definition \ref{Def: Barriers}.

\begin{lemma} \label{Lem: no w}
Let $w=[o,fo]$ and $g\in G$ be an $(L_2,f)$-barrier-free element. Then we have $c_w(g)=0$ and $c_{w^{-1}}(g)=0$.
\end{lemma}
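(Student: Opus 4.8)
The plan is to argue by contradiction, using the geometry of almost realizing paths together with Definition \ref{Def: Barriers}. Suppose $c_w(g)>0$; I will produce an $(L_2,f)$-barrier on a geodesic $[o,go]$, contradicting the hypothesis that $g$ is $(L_2,f)$-barrier-free. The statement $c_{w^{-1}}(g)=0$ then follows by the symmetric argument with the two endpoints interchanged.

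First I would choose an almost realizing path $\beta$ of $c_w(g)=c_{w,W}([o,go])$, which exists by Remark \ref{Rmk: RealPathNotExist}. From the defining inequality for such a path, namely $d(o,go)-(|\beta|-W|\beta|_w)\ge c_w(g)/2>0$, together with the trivial bound $|\beta|\ge d(o,go)$, one gets $W|\beta|_w>0$, hence $|\beta|_w\ge 1$. Thus $\beta$ contains, as a subpath, an honest copy $t\cdot w=[to,tfo]$ of $w=[o,fo]$ for some $t\in G$; in particular both $to$ and $tfo$ lie on $\beta$. Next, since $W\ge 3L_1$ and $|w|>W$, Lemma \ref{Lem: AlmostRealizing} shows that $\beta$ is a quasi-geodesic, so, as recorded in the discussion preceding this lemma, $\beta$ lies within the $L_2$-neighborhood of every geodesic joining $o$ and $go$. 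Choosing a geodesic $[o,go]$ that is $(L_2,f)$-barrier-free (which exists by hypothesis), we obtain $d(to,[o,go])\le L_2$ and $d(tfo,[o,go])\le L_2$, so by Definition \ref{Def: Barriers} this geodesic contains an $(L_2,f)$-barrier witnessed by $t$ --- a contradiction. Hence $c_w(g)=0$.

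For $c_{w^{-1}}(g)=0$ I would use the identity $c_{w^{-1},W}([o,go])=c_{w,W}([go,o])$ and repeat the argument with $o$ and $go$ swapped: an almost realizing path $\beta$ from $go$ to $o$ with $|\beta|_{w^{-1}}\ge 1$ again contains a subpath whose endpoints form a pair $\{to,tfo\}$, since a copy of $w^{-1}$ traverses the same segment as the corresponding copy of $w$, and these points lie within $L_2$ of the chosen barrier-free geodesic, yielding the same contradiction. The step I expect to be the most delicate is the extraction of $|\beta|_w\ge 1$: it relies precisely on the factor-$1/2$ clause in the definition of an almost realizing path (the other clause only supplies the quasi-geodesic estimate of Lemma \ref{Lem: AlmostRealizing}), and one must make sure the quantity $|\beta|_w$ counts genuine $G$-translates of $w$ embedded inside $\beta$, so that their endpoints are literally points of $\beta$ to which the $L_2$-neighborhood statement applies.
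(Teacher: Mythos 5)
Your proof is correct and follows essentially the same route as the paper's: take an almost realizing path, use the factor-$1/2$ clause to extract $|\beta|_w\ge 1$, place the resulting copy $t\cdot w=[to,tfo]$ inside $N_{L_2}([o,go])$ via the quasi-geodesic property and the Morse Lemma, and contradict barrier-freeness; the case of $w^{-1}$ is handled by the same observation that a copy of $w^{-1}$ has the same pair of endpoints $\{to,tfo\}$. Your explicit care about which geodesic $[o,go]$ witnesses barrier-freeness is a point the paper glosses over slightly, but it is covered by the remark preceding the lemma that an almost realizing path lies in the $L_2$-neighborhood of \emph{any} geodesic joining its endpoints.
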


\begin{proof}
Assume that $c_w(g)>0$ and that $\alpha$ is \revise{an almost realizing path of $c_w(g)$}. From Lemma \revise{\ref{Lem: AlmostRealizing}} we know that $\alpha$ is a $(\frac{|w|}{|w|-W}, \frac{\revise{3}W|w|}{|w|-W})$-quasi-geodesic. Thus, $\alpha \subseteq N_{L_2}([o,go])$. Additionally, we have $d(o,go)-(|\alpha|-W|\alpha|_w)\revise{\ge c_w(g)/2}>0$. Therefore, $|\alpha|_w>(|\alpha|-d(o,go))/W> 0$. From the definition of $|\alpha|_w$, there exists some element $t\in G$ such that $t\cdot w\subseteq \alpha \subseteq N_{L_2}([o,go])$. This leads to a contradiction, as $g$ is $(L_2,f)$-barrier-free. Therefore, $c_w(g)=0$. We note that as long as there is no $t\in G$ such that $t \cdot w \subseteq N_{L_2}([o,go])$, there is also no $t\in G$ such that $t \cdot w^{-1} \subseteq N_{L_2}([o,go])$. The conclusion that $c_{w^{-1}}(g)=0$ then follows.
\end{proof}

For simplicity, for any $f\in G$, we set $c_f:=c_{[o,fo]}$ and $h_f:=h_{[o,fo]}$. Let $\{f_i:i\ge 1\}$ be the sequence from Proposition \ref{Prop: InfWords}. As the relation $\sim$ is invariant under conjugation, we assume in addition that each $f_i$ is cyclically reduced.

\begin{lemma}\label{Lem: UnboundedQM}
    For all $i\ge 1$, there exists $r_i>0$ such that \revise{for all $j<i$, we have}
    \begin{enumerate}
        \item  $h_{f_i^{r_i}}(f_i^{r_i\revise{m}})\ge L_1\revise{m}$ for any $\revise{m}\ge 0$.
        \item  $h_{f_i^{r_i}}$ is $0$ on $\langle f_j\rangle$.
    \end{enumerate}
\end{lemma}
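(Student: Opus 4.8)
The first thing I would do is choose $r_i$ large enough to simultaneously satisfy two separate requirements, one for part (1) and one for part (2). For part (2), I appeal to Proposition \ref{Prop: InfWords}: since $f_i \nsim f_j^{\pm 1}$ for each $j < i$, Lemma \ref{Lem: NotEquiImplyBarFree} (applied with $\epsilon' = L_2$, noting there are only finitely many $j<i$) provides a threshold $r$ so that $f_j^m$ is $(L_2, f_i^s)$-barrier-free for every $m \in \Z$ and every $s \ge r$. For part (1), the mechanism is that $f_i \nsim f_i^{-1}$, so one wants a large power $f_i^{r_i}$ to appear many times along the axis of $f_i^{r_i m}$ while the inverse word essentially never appears; this is exactly the computation underlying the Brooks/Epstein--Fujiwara lower bound and will also require $r_i$ large. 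I would take $r_i$ to be the maximum of the two thresholds.

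For part (2), set $w = [o, f_i^{r_i} o]$, so $h_{f_i^{r_i}} = c_w - c_{w^{-1}}$ evaluated at orbit points. For any element $h \in \langle f_j\rangle$ with $j<i$, we have $h = f_j^m$ for some $m$, and by the choice of $r_i$ above together with $r_i \ge r$, the element $f_j^m$ is $(L_2, f_i^{r_i})$-barrier-free. Lemma \ref{Lem: no w} (with $f = f_i^{r_i}$) then gives $c_w(f_j^m) = 0$ and $c_{w^{-1}}(f_j^m) = 0$, hence $h_{f_i^{r_i}}(f_j^m) = 0$. This disposes of part (2) cleanly.

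For part (1), I would write $g = f_i^{r_i}$ and $w = [o, go]$, and estimate $c_w(g^m) = c_w([o, g^m o])$ from below. The geodesic $[o, g^m o]$ fellow-travels the quasi-axis of $g$ (which passes near $o$ since $f_i$ is cyclically reduced), and by the quasi-convexity inequality (\ref{Quasi-convexity}) the orbit points $o, go, g^2 o, \ldots, g^m o$ march out along this quasi-axis with $d(o, g^m o) \ge m(d(o, go) - 2L_1)$. The natural path $\alpha_0 = [o,go]\cup[go,g^2o]\cup\cdots\cup[g^{m-1}o,g^mo]$ is a concatenation of $m$ copies of $w$ (no overlapping except at vertices), so $|\alpha_0|_w \ge m$ while $|\alpha_0| = m\,|w| = m\,d(o,go)$; thus $c_w(g^m) \ge d(o,g^m o) - (|\alpha_0| - W|\alpha_0|_w) \ge d(o,g^m o) - m\,d(o,go) + Wm$. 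Since $d(o, g^m o) \le m\, d(o, go)$ trivially, this only gives $c_w(g^m) \ge Wm - (\text{slack})$, which I would sharpen: the point is rather to bound $c_{w^{-1}}(g^m)$ from \emph{above}. Using $f_i \nsim f_i^{-1}$, an almost realizing path of $c_{w^{-1}}(g^m)$ stays in an $L_2$-neighborhood of $[o, g^m o]$, and any copy of $w^{-1}$ in it would force a long orientation-reversed segment of $\ax(f_i)$ near $\ax(f_i)$, contradicting $f_i \nsim f_i^{-1}$ once $r_i$ is large; so $|\beta|_{w^{-1}}$ is bounded and $c_{w^{-1}}(g^m)$ is bounded by a constant independent of $m$ (in fact one can arrange it to be $0$, as in Lemma \ref{Lem: no w} applied to the reversed situation). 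Combining the linear lower bound on $c_w(g^m)$ with the bounded $c_{w^{-1}}(g^m)$ yields $h_{f_i^{r_i}}(f_i^{r_i m}) \ge L_1 m$ after absorbing constants into the choice of $W \ge 3L_1$ and $r_i$.

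The main obstacle is the quantitative bookkeeping in part (1): one must make the copies-of-$w$ count along $[o, g^m o]$ honest (controlling how the almost realizing path can "cheat" by taking shortcuts, which is where Lemma \ref{Lem: AlmostRealizing} and the $L_2$-neighborhood control enter) and must rule out copies of $w^{-1}$ using only the non-equivalence $f_i \nsim f_i^{-1}$ — this is the heart of the Epstein--Fujiwara argument and the place where the powers $r_i$ must be taken large in a way that depends on the quasi-isometry and hyperbolicity constants. Part (2), by contrast, is essentially immediate from the barrier-free machinery already set up.
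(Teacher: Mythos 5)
Your proposal is correct and follows essentially the same route as the paper: part (2) via Lemma \ref{Lem: NotEquiImplyBarFree} (with $\epsilon'\ge L_2$) and Lemma \ref{Lem: no w}, and part (1) by combining the concatenated-path lower bound $c_w(f_i^{r_im})\ge d(o,f_i^{r_im}o)-m\,d(o,f_i^{r_i}o)+Wm\ge (W-2L_1)m\ge L_1m$ from (\ref{Quasi-convexity}) with the vanishing of $c_{w^{-1}}(f_i^{r_im})$ forced by $f_i\nsim f_i^{-1}$. Note only that the final inequality needs $c_{w^{-1}}(f_i^{r_im})$ to be exactly $0$ (as you observe one can arrange, and as the paper establishes in its Claim), not merely bounded, since $W\ge 3L_1$ leaves no extra slack to absorb an additive constant.
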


\begin{proof}
    We first prove Item (2). Fix $i\ge 1$. As each element in the finite set $A:=\{f_1^{\pm 1}, \ldots, f_{i-1}^{\pm 1}\}$ is not equivalent to $f_i$, \revise{for any sufficiently large $\epsilon'>0$, }Lemma \ref{Lem: NotEquiImplyBarFree} gives a constant $r_i>0$ such that each $a\in A$ satisfies that $a^{\revise{m}}$ is $(\epsilon',f_i^{r_i})$-barrier-free for all ${\revise{m}}\in \mathbb Z$.

   \revise{Let $\epsilon'\ge L_2$.} As a result of Lemma \ref{Lem: no w}, we obtain
    $$c_{f_i^{r_i}}(a^{\revise{m}})=c_{f_i^{-r_i}}(a^{\revise{m}})=0$$
    for any $a\in A$ and $n\in \mathbb Z$.

    Hence, $h_{f_i^{r_i}}(f_j^{\revise{m}})=c_{f_i^{r_i}}(f_j^{\revise{m}})-c_{f_i^{-r_i}}(f_j^{\revise{m}})=0$ for all ${\revise{m}}\in \mathbb Z$.

    Now, we turn to proving Item (1).
    \begin{claim}
        For each $i\ge 1$, there exists $r_i'>0$ such that $c_{f_i^{\revise{r}}}(f_i^{-{\revise{rm}}})=0$ for any ${\revise{m}}\ge 0$ \revise{ and $r\ge r_i'$}.
    \end{claim}
    \begin{proof}[Proof of Claim]
        Suppose not. Then there exists $i\ge 1$ such that for every sufficiently large $r_i'>0$, there exists ${\revise{m}}\ge 0$ \revise{and $r\ge r_i'$} such that $c_{f_i^{\revise{r}}}(f_i^{-{\revise{rm}}})>0$. Denote $w=[o,f_i^{\revise{r}}o]$. Let $\alpha$ be \revise{an almost realizing path of} $c_w(f_i^{-{\revise{rm}}})$ in \revise{(\ref{Equ: AlmostReal})} with $f_i^{-1}$-orientation. Then we have $d(o,f_i^{-{\revise{rm}}}o)-(|\alpha|-W|\alpha|_w)\revise{\ge c_w(f_i^{-rm})/2}>0$. Thus, $|\alpha|_w>(|\alpha|-d(o,f_i^{-{\revise{rm}}}o))/W> 0$. From the definition of $|\alpha|_w$ there exists some element $t\in G$ such that \revise{$t\cdot w\subseteq \alpha\subseteq N_{L_2}([o,f_i^{-rm}o])$ and} the map $t$ respects the orientation. As $r_i'$ can be arbitrarily large, this implies that $f_i\sim f_i^{-1}$, which is a contradiction.
    \end{proof}
    Now we return to the proof of Item (1). For each $i\ge 1$, \revise{Lemma \ref{Lem: NotEquiImplyBarFree} allows us to require $r_i\ge r_i'$ where $r_i$ is the constant appearing in the proof of Item (2). Denote }$w=[o,f_i^{r_i}o]$. For $n\ge 1$, let $\gamma$ be the concatenated path $\cup_{0\le k \le {\revise{m}}-1}f_{i}^{r_ik}w$. According to (\ref{Quasi-convexity}), we have $$|\gamma|={\revise{m}}d(o, f_i^{r_i}o)\le d(o, f_i^{r_i {\revise{m}}}o)+2L_1 {\revise{m}}.$$ Obviously, $|\gamma|_w={\revise{m}}$. Recall that $W\ge 3L_1$. Then (\ref{Definition of c_w}) gives that $$ c_{f_i^{r_i}}(f_i^{r_i{\revise{m}}})\ge d(o, f_i^{r_i {\revise{m}}}o)-(|\gamma|-W|\gamma|_w)\ge d(o, f_i^{r_i {\revise{m}}}o)-(|\gamma|-3L_1 {\revise{m}})\ge L_1 {\revise{m}}.$$

    Therefore, $$h_{f_i^{r_i}}(f_i^{r_i{\revise{m}}})=c_{f_i^{r_i}}(f_i^{r_i{\revise{m}}})-c_{f_i^{-r_i}}(f_i^{r_i{\revise{m}}})= c_{f_i^{r_i}}(f_i^{r_i{\revise{m}}})-c_{f_i^{r_i}}(f_i^{-r_i{\revise{m}}})\ge L_1{\revise{m}}.$$
\end{proof}

Define $h_i : G \to \mathbb R$ as $h_i=h_{f_i^{r_i}}$, where $r_i>0$ is chosen as in Lemma \ref{Lem: UnboundedQM}, then we obtain the following.

\begin{proposition}\label{Prop: InfiniteQM}
 $\{h_i:i\ge 1\}$ is an infinite sequence of quasimorphisms on $G$ such that
\begin{enumerate}

\item $h_{i}(f_j^\revise{m})=0$ for all $i\neq j$ and for all $\revise{m}\geq 0$;
\item $h_{i}(f_i^{r_i\revise{m}})\geq L_1\revise{m}$ for all $i \geq 1$ and for all $\revise{m}\geq 0$;
\item $\psi(f_i)=0$ for all homomorphisms $\psi: G\rightarrow \mathbb{R}$;
\item \revise{the distance $d(o,f_io)$} tends to infinity as $i$ tends to infinity;
\item $h_{i}(h)=0$ for all $h\in H_j$ with $1\le j\le n$.
\end{enumerate}

\end{proposition}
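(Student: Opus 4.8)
The plan is to verify the five properties one at a time, assembling the lemmas of this subsection. First note that each $h_i=h_{f_i^{r_i}}$ is a quasimorphism by Lemma \ref{Lem: UniformDefect}, and once (1) and (2) hold the $h_i$ are pairwise distinct (evaluate $h_i-h_j$ on $f_i^{r_i m}$ and let $m\to\infty$), so the sequence is genuinely infinite. The one freedom we exploit throughout is that the conclusions of Lemma \ref{Lem: UnboundedQM} persist if $r_i$ is enlarged: its proof only uses the ``$s\ge r$'' conclusion of Lemma \ref{Lem: NotEquiImplyBarFree} and the ``$r\ge r_i'$'' conclusion of the Claim inside it. So we may impose on $r_i$, in addition, finitely many further lower bounds; and since $f_i^{r_i}$ is cyclically reduced in the Schottky group $F$, (\ref{Quasi-convexity}) gives $d(o,f_i^{r_i}o)\ge r_i\bigl(d(o,f_io)-2L_1\bigr)$ with $d(o,f_io)-2L_1>0$, so any such bound is attainable by taking $r_i$ large.

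Properties (2)--(4) are essentially bookkeeping. Property (2) is exactly Lemma \ref{Lem: UnboundedQM}(1). For (3), Lemma \ref{Lem: TwoIndependentElements} puts $g_1,g_2$ in $[G,G]$, hence $f_i=g_1^{n_i}g_2^{m_i}g_1^{k_i}g_2^{-l_i}\in[G,G]$, which is killed by every homomorphism $G\to\R$. For (4), $f_i$ is cyclically reduced of $S$-length at least $n_i$, and the orbit map $\Phi\colon\G(F,S)\to X$ is a $(\lambda_0,\epsilon_0)$-quasi-isometric embedding, so $d(o,f_io)\ge n_i/\lambda_0-\epsilon_0\to\infty$ since $n_i\to\infty$.

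Property (1) splits into two cases. If $j<i$, then $h_i$ vanishes on $\langle f_j\rangle$ by Lemma \ref{Lem: UnboundedQM}(2). If $j>i$, then $f_i\nsim f_j^{\pm1}$ by Proposition \ref{Prop: InfWords}(2) together with the symmetry of $\sim$ (which also gives $a\sim b\iff a^{-1}\sim b^{-1}$), so Lemma \ref{Lem: NotEquiImplyBarFree} with $\epsilon'=L_2$ supplies a threshold $r(i,j)$ beyond which $f_j^m$ is $(L_2,f_i^{s})$-barrier-free for every $m\in\Z$. The crucial point is that $\sup_{j>i}r(i,j)<\infty$: since $j>i$, every syllable exponent of $f_j$ dwarfs every syllable exponent of $f_i$, so $\ax(f_j)$ stays within a bounded neighbourhood of a geodesic line along stretches much longer than any ``straight'' piece of $\ax(f_i)$; hence $\ax(f_i)$ and a translate of $\ax(f_j)$ can fellow-travel only along a subsegment whose length is bounded independently of $j$, and $r(i,j)$ is governed by this length divided by $\Vert f_i\Vert$. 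Imposing $r_i\ge\sup_{j>i}r(i,j)$ and applying Lemma \ref{Lem: no w} gives $c_{f_i^{r_i}}(f_j^m)=c_{f_i^{-r_i}}(f_j^m)=0$, hence $h_i(f_j^m)=0$.

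Finally, property (5). Since each $H_j$ ($1\le j\le n$) acts elliptically on the $\delta$-hyperbolic space $X$, the orbit $H_j\cdot o$ is bounded, and as $o\in H_j\cdot o$ the finite union $\bigcup_{j=1}^n H_j\cdot o$ lies in the ball of radius $R:=\max_{1\le j\le n}\diam(H_j\cdot o)<\infty$ about $o$. For $h\in H_j$, every point of the geodesic $[o,ho]$ is within $d(o,ho)\le R$ of $o$, so $[o,ho]$ stays in that ball; were it to contain an $(L_2,f_i^{r_i})$-barrier witnessed by $t\in G$, points $p,q\in[o,ho]$ with $d(to,p),d(tf_i^{r_i}o,q)\le L_2$ would force $d(o,f_i^{r_i}o)=d(to,tf_i^{r_i}o)\le 2L_2+R$. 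Imposing $r_i$ large enough that $d(o,f_i^{r_i}o)>2L_2+R$ rules this out, so $h$ is $(L_2,f_i^{r_i})$-barrier-free, Lemma \ref{Lem: no w} gives $c_{f_i^{r_i}}(h)=c_{f_i^{-r_i}}(h)=0$, and hence $h_i(h)=0$. The main obstacle is the uniform bound $\sup_{j>i}r(i,j)<\infty$ in property (1): turning the heuristic ``$\ax(f_j)$ is straight at a scale far larger than the syllables of $f_i$'' into a genuine fellow-travelling estimate requires working through the quasi-isometric embedding $\Phi$ and the tree structure of $\G(F,S)$, and is the only place where the combinatorics of the words $f_i$ really enters.
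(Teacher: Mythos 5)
Your proposal tracks the paper's proof essentially verbatim for items (2)--(5): item (2) is Lemma \ref{Lem: UnboundedQM}(1); item (3) is $g_1,g_2\in[G,G]$; item (4) is the Schottky quasi-isometric embedding; and item (5) is the same chain ``elliptic orbits bounded by $D$ $\Rightarrow$ $d(o,f_i^{r_i}o)>2L_2+D$ $\Rightarrow$ every $h$ is $(L_2,f_i^{r_i})$-barrier-free $\Rightarrow$ Lemma \ref{Lem: no w}''. Whether one arranges the largeness of $d(o,f_i^{r_i}o)$ by taking $n_1\gg0$ (as the paper does) or by enlarging $r_i$ (as you do, after correctly checking that Lemma \ref{Lem: UnboundedQM} survives enlarging $r_i$) is immaterial.

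The one substantive point is item (1) for $j>i$. You are in fact more careful here than the paper, which attributes items (1) and (2) wholesale to Lemma \ref{Lem: UnboundedQM} even though that lemma's statement only covers $j<i$; the missing half is genuinely needed later (in the injectivity step of Proposition \ref{Prop: KeyProp} one evaluates $\sum_j a_jh_j$ at $f_i^{r_im}$ and must kill the terms with $j<i$, i.e.\ quasimorphisms of small index evaluated on words of large index). Your route --- $f_i\nsim f_j^{\pm1}$ by symmetry of $\sim$, then Lemma \ref{Lem: NotEquiImplyBarFree} with $\epsilon'=L_2$, then Lemma \ref{Lem: no w} --- is the right one, but it hinges on the uniformity $\sup_{j>i}r(i,j)<\infty$, which you state only as a heuristic and explicitly decline to prove. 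That is the one genuine gap in the proposal. It is closable: either by the syllable-counting argument you sketch (a fellow-travelling segment of $\ax(f_i)$ containing two corners of the word $f_i^{r_i}$ would force two turns of a translate of $\ax(f_j)$ at mutual distance far smaller than $\min(n_j,m_j,|k_j|,l_j)$, which is impossible; this is essentially Claim 1 in the proof of \cite[Proposition 2]{BF02}), or more cheaply by noting that the chain $0\ll n_1\ll m_1\ll\cdots$ permits choosing the exponents of $f_j$ for $j>i$ \emph{after} $r_i$ is fixed, so that a single syllable of $f_j^m$ already dwarfs all of $[o,f_i^{r_i}o]$ and no $j$-uniform threshold is required. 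Until one of these is written out, item (1) is only half proved --- though, to be fair, the paper's own proof supplies no more detail at this point than you do.
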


\begin{proof}
The first two items follow directly from Lemma \ref{Lem: UnboundedQM}. Recall from Lemma \ref{Lem: TwoIndependentElements} that $g_1, g_2\in [G,G]$. Hence, each $f_i\in F=\langle g_1,g_2\rangle$ is a product of \revise{finitely many} commutators, which implies Item (3). \revise{Recall from the paragraph following Lemma \ref{Lem: TwoIndependentElements} that  $F=\langle g_1,g_2\rangle$ is a Schottky group which means that the orbital map $\Phi: \G(F,\{g_1,g_2\})\to X$ is a $(\lambda_0,\epsilon_0)$-quasi-isometric embedding. Since $f_i=g_1^{n_i}g_2^{m_i}g_1^{k_i}g_2^{-l_i}$ for $0\ll n_1,m_1,k_1,l_1,\ll n_2,m_2,k_2,l_2\ll \cdots$, one gets that $d(o,f_io)\ge \lambda_0^{-1}(n_i+m_i+k_i+l_i)-\epsilon_0$ which implies Item (4).} It suffices for us to verify Item (5).

As each $H_j(1\le j\le n)$ acts elliptically on $X$, there is a $D>0$ such that $d(o,ho)\le D$ for all $h\in \cup_{1\le j\le n}H_j$. \revise{Since $0\ll n_1,m_1,k_1,l_1\ll n_i,m_i,k_i,l_i$ for each $i\ge 2$, we can require $n_1\gg 0$ such that $d(o,f_i^{r_i}o)\ge \lambda_0^{-1}r_i(n_i+m_i+k_i+l_i)-\epsilon_0>2L_2+D$.} Then it follows from the definition of barriers (cf. Definition \ref{Def: Barriers}) that each $h$ is $(L_2,f_i^{r_i})$-barrier-free. Hence, as a result of Lemma \ref{Lem: no w}, $c_{f_i^{r_i}}(h)=c_{{f_i}^{-r_i}}(h)=0$. This shows that $h_{i}(h)=0$ for all $h\in \cup_{1\le j\le n}H_j$, which completes the proof.
\end{proof}

At the end of this section, we prove Proposition \ref{Prop: KeyProp}.

\begin{proof}[Proof of Proposition \ref{Prop: KeyProp}]
At first, we claim that for each $g\in G$ and $i\gg 0$, one has  $h_{i}(g)= 0$. Indeed, as Proposition \ref{Prop: InfiniteQM} (4) shows, \revise{$d(o,f_io)$} tends to infinity as $i\to\infty$. Hence, for $i\gg 0$, $g$ is $(L_2,f_i^{r_i})$-barrier-free \revise{since the diameter of $N_{L_2}([o,go])$ is finite. Thus one gets that}  $h_{i}(g)=0$ by Lemma \ref{Lem: no w}. Therefore, if $(a_i)\revise{_{i=1}^{\infty}} \in \ell^1$, then $\sum\limits_{i=1}^{\infty} a_i h_{i}$ is well-defined as an element of $C^1(G; \mathbb{R})$ since $\sum\limits_{i=1}^{\infty} a_i h_{i}(g)$ is in fact a finite sum for each $g\in G$. For the same reason, $\sum\limits_{i=1}^{\infty} a_i d^1 h_{i}$ is a well-defined 2-cocycle.   By Lemma \ref{Lem: UniformDefect}, all the 2-cocycles $d^1 h_{i}$ have a common bound \revise{, which means that there exists a constant $M>0$ such that $\sup_{g,g'\in G}|d^1 h_i(g,g')|\le \Delta(h_i)\le M$ for $i\ge 1$}. It follows that if a sequence $(a_i)\revise{_{i=1}^{\infty}}\in \ell^1$ then $\sum\limits_{i=1}^{\infty} a_i d^1 h_{i}$ is a bounded 2-cocycle. Therefore, we have the following equality.
 \begin{center}
 $\sum\limits_{i=1}^{\infty} a_i d^1 h_{i}=d^1(\sum\limits_{i=1}^{\infty}a_i h_{i})$.
 \end{center}

We get a real linear map $\omega: \ell^1 \rightarrow H^2_b(G; \mathbb{R})$ which sends the sequence $(a_i)\revise{_{i=1}^{\infty}}$ to the cohomology class represented by $\sum\limits_{i=1}^{\infty} a_i d^1 h_{i}$.  From Proposition \ref{Prop: InfiniteQM} (5), we know each $d^1 h_{i}$ lies in  $H^2_b(G, \{H_i\}_{i=1}^n; \mathbb R)$. So the real linear map is actually $\omega: \ell^1 \rightarrow  H^2_b(G, \{H_i\}_{i=1}^n; \mathbb R)$. In order to see that $\omega$ is injective, suppose $\omega((a_i))=0$. Then

\begin{center}
$d^1(\sum\limits_{i=1}^{\infty}a_i h_{i})=d^1 b$
\end{center}

\noindent for some bounded real-valued map $b\in C^1_b(G; \mathbb{R})$.  This means \revise{the following function}

\begin{center}
$\revise{\phi:=}\sum\limits_{i=1}^{\infty}a_i h_{i}-b$
\end{center}
\noindent \revise{is a homomorphism from $G$ to $\mathbb{R}$.} Apply this equality of 1-cochains to ${f_i}^{r_i\revise{m}} \in G$, we find

\begin{center}
$a_i h_{i}(f_i^{r_i\revise{m}})-b({f_i}^{r_i\revise{m}})=\phi({f_i}^{r_i\revise{m}})=0, \forall \revise{m}\ge 0.$
\end{center}

Since  $h_{i}(f_i^{r_i\revise{m}})\ge L_1\revise{m}$ and $b$ is a bounded map, this forces $a_i$ to be $0$. As $i$ is arbitrary, $(a_i)$ must be the zero vector. This shows the injectivity of $\omega: \ell^1\to H^2_b(G,\revise{\{H_i\}_{i=1}^n}; \mathbb R)$.

Finally, as $\ell^1$ has the dimension equal to the \revise{cardinality} of the continuum and the space of bounded cochains has cardinality $|\mathbb R^{\mathbb N}|=|\mathbb R|$, we complete the proof.
\end{proof}

As a consequence of Proposition \ref{Prop: KeyProp}, we have
\begin{proposition}\label{Prop: WPDQuotient}
    Let $G$ be a countable group and $H$ a normal subgroup of $G$. Suppose that $G/H$ acts WPD on a hyperbolic space. Then the dimension of $H^2_b(G, H; \mathbb R)$ as a vector space over $\mathbb R$ has the cardinality of the continuum.
\end{proposition}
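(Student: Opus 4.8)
The plan is to deduce Proposition \ref{Prop: WPDQuotient} directly from Proposition \ref{Prop: KeyProp} and Proposition \ref{Prop: NormalSubgp}. Write $Q = G/H$ and let $\pi\colon G \to Q$ be the quotient map. By hypothesis $Q$ acts WPD on a $\delta$-hyperbolic space $X$; in particular $Q$ is countable and non-elementary (it is not virtually cyclic, by the first bullet of Definition \ref{Def: WPD}). Taking the trivial collection of subgroups (i.e. $n=0$, so that $H^2_b(Q,\varnothing;\mathbb R) = H^2_b(Q;\mathbb R)$) in Proposition \ref{Prop: KeyProp}, we obtain that the dimension of $H^2_b(Q;\mathbb R)$ over $\mathbb R$ has the cardinality of the continuum. (Alternatively, if one prefers not to allow the empty collection, apply Proposition \ref{Prop: KeyProp} with $H_1 = \{1\}$, which certainly acts elliptically on $X$, since the trivial subgroup acts elliptically on any space; this gives an injection $\ell^1 \hookrightarrow H^2_b(Q,\{1\};\mathbb R)$, and from the long exact sequence \eqref{Equ: LongExactSeq} together with $H^1_b(\{1\};\mathbb R) = 0$ and $H^2_b(\{1\};\mathbb R)=0$ one gets $H^2_b(Q,\{1\};\mathbb R) \cong H^2_b(Q;\mathbb R)$.)

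Next I would invoke Proposition \ref{Prop: NormalSubgp}: since $H \lhd G$, there is an isomorphism $H^2_b(G/H;\mathbb R) \cong H^2_b(G,H;\mathbb R)$. Combining this with the previous paragraph gives that the dimension of $H^2_b(G,H;\mathbb R)$ over $\mathbb R$ has the cardinality of the continuum, which is exactly the assertion. I should double-check the upper bound as well: the dimension of $H^2_b(G,H;\mathbb R)$ is at most the cardinality of $C^2_b(G;\mathbb R) \subseteq \mathbb R^{G\times G}$, which has cardinality $|\mathbb R|^{|G \times G|} = |\mathbb R|$ as long as $G$ is countable. Since $G$ is countable by hypothesis, $|H^2_b(G,H;\mathbb R)| \le |\mathbb R|$, and as its dimension is at least continuum (and a real vector space of dimension $\kappa \ge 2^{\aleph_0}$ has cardinality $\kappa$), the dimension equals the continuum exactly.

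The only genuinely delicate point in this argument is making sure Proposition \ref{Prop: KeyProp} is applicable to the action $Q \curvearrowright X$, i.e. that all of its standing hypotheses (stated at the opening of Section \ref{Sec: ConstructingQM}: a non-elementary countable group acting WPD on a $\delta$-hyperbolic space, together with a finite collection of subgroups acting elliptically) are met. Countability of $Q$ follows from countability of $G$; the WPD hypothesis and non-elementarity are exactly what we assumed about $Q \curvearrowright X$; and the collection of elliptic subgroups can be taken empty (or the trivial subgroup), which is harmless. So there is no real obstacle here — the content has already been done in Proposition \ref{Prop: KeyProp} and Proposition \ref{Prop: NormalSubgp}, and this proof is a short assembly of those two results together with the elementary cardinality bookkeeping. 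I would therefore write the proof in a few lines, making the reduction to $H^2_b(G/H;\mathbb R)$ explicit and citing Propositions \ref{Prop: KeyProp} and \ref{Prop: NormalSubgp}.
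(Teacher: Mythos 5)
Your proposal is correct and follows exactly the paper's own argument: apply Proposition \ref{Prop: KeyProp} to the quotient $G/H$ (whose countability, non-elementarity, and WPD hypotheses you rightly verify) to get that $H^2_b(G/H;\mathbb R)$ has continuum dimension, and then transfer this via the isomorphism $H^2_b(G/H;\mathbb R)\cong H^2_b(G,H;\mathbb R)$ of Proposition \ref{Prop: NormalSubgp}. The extra cardinality bookkeeping you include is harmless and already implicit in the proof of Proposition \ref{Prop: KeyProp}.
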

\begin{proof}
    Proposition \ref{Prop: KeyProp} shows that the dimension of $H^2_b(G/H; \mathbb R)$ as a vector space over $\mathbb R$ has the cardinality of the continuum. Since Proposition \ref{Prop: NormalSubgp} shows that $H_b^2(G/H; \R)\cong H^2_b(G, H; \mathbb R)$, the conclusion follows.
\end{proof}

\section{The Proof of Theorem \ref{MainThm}}\label{Sec: MainThm}

Let us recall the conditions of Theorem \ref{MainThm}: $G$ acts properly on $X$ with contracting elements and $\{H_i\}(1\le i\le n)$ is a finite collection of Morse subgroups with infinite index in $G$.

\begin{definition}
    The action of a group $G$ on a metric space $X$ is \textit{acylindrical} if for all $L>0$ there exist $D>0$ and $B>0$ such that if $x,y\in X$ and $d(x,y)>D$, then there are at most $B$ elements $g\in G$ with $d(x,gx)\le L$ and $d(y,gy)\le L$.
\end{definition}
\revise{An acylindrical action $G\curvearrowright X$ is called \textit{non-elementary} if the action is unbounded and $G$ is not virtually cyclic. }

\begin{lemma}\label{Lem: AcyImpWPD}
    A non-elementary acylindrical action on a hyperbolic space must be WPD.
\end{lemma}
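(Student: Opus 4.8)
The plan is to verify the three conditions in the definition of a WPD action (Definition \ref{Def: WPD}) in turn. The first, that $G$ is not virtually cyclic, is built into the hypothesis that the acylindrical action is non-elementary, so there is nothing to prove.

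For the second condition, the existence of an element acting loxodromically, I would appeal to the standard trichotomy for acylindrical actions on hyperbolic spaces (due to Bowditch and Osin): any such action is either elliptic (all orbits bounded), lineal (in which case $G$ is virtually cyclic), or of general type, and in the last case $G$ contains loxodromic isometries. By hypothesis the action is unbounded, which rules out the elliptic case, and $G$ is not virtually cyclic, which rules out the lineal case; hence we are in the general type case and $G$ contains a loxodromic element. I would cite this classification rather than reprove it.

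For the third condition, fix a loxodromic $g\in G$, a point $x\in X$, and a constant $C>0$. Apply the acylindricity hypothesis with $L=C$ to obtain constants $D,B>0$. Since $g$ is loxodromic we have $\Vert g\Vert>0$, so Lemma \ref{Lem: StableLength} gives $d(x,g^{N}x)\ge N\Vert g\Vert$ for every $N$; in particular we may choose $N$ with $d(x,g^{N}x)>D$. Taking $y=g^{N}x$, acylindricity then asserts that there are at most $B$ elements $h\in G$ with $d(x,hx)\le C$ and $d(y,hy)\le C$, i.e. the set $\{h\in G:\ d(x,hx)\le C,\ d(g^{N}x,hg^{N}x)\le C\}$ is finite. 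This is exactly the third WPD condition, and the lemma follows.

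The proof is essentially a matching-up of definitions; the only genuinely external ingredient is the classification of acylindrical actions used in the second step to produce a loxodromic element. That is therefore the only (and rather mild) obstacle — the third condition is a one-line consequence of acylindricity combined with the fact that a loxodromic isometry has positive stable translation length, and the first is immediate.
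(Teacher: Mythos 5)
Your proof is correct and follows essentially the same route as the paper: the paper likewise cites Osin's classification of acylindrical actions (\cite[Theorem 1.1]{Osi16}) to produce a loxodromic element, and verifies the third WPD condition by combining acylindricity with Lemma \ref{Lem: StableLength} to choose $N$ so that $d(x,g^Nx)\ge N\Vert g\Vert>D$.
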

\begin{proof}
    Let $G$ be a group which admits a non-elementary acylindrical action on a hyperbolic space $X$.  Since $G\curvearrowright X$ is acylindrical, for all $L>0$, there exists $D>0$ such that if $x,y\in X$ and $d(x,y)>D$, then the set $\{g\in G: d(x,gx)\le L, d(y,gy)\le L\}$ is finite.

    Now we verify that $G\curvearrowright X$ is also a WPD action according to Definition \ref{Def: WPD}. By \cite[Theorem 1.1]{Osi16}, $G$ contains infinitely many independent loxodromic elements. Thus, it remains to verify the third item in Definition \ref{Def: WPD}. For every loxodromic element $g\in G$, every $x\in X$, and every $L>0$, let $D>0$ be the constant given by the above acylindrical action and $N>0$ be an integer depending only on $g$ such that $d(x,g^Nx)\ge N\|g\|>D$. Then the above acylindrical action implies that the set $\{h\in G: d(x,hx)\le L, d(g^Nx,hg^Nx)\le L\}$ is finite. This completes the verification.
\end{proof}

Fix a contracting element $g$ given by Lemma \ref{Lem: UniformShortProjection} and $K\gg 0$. Section \ref{Sec: MorseSubgpofInfIndex} produces a projection complex $\mathcal P_K(\mathcal F)$.

As shown in \cite[Theorem 1.5]{Sis18}, $E(g)$ is a hyperbolically embedded subgroup of $G$. Therefore, as a result of \cite[Theorem 5.6]{BBFS19}, $G$ acts acylindrically on $\mathcal P_K(\mathcal F)$. In particular, as a result of Lemma \ref{Lem: AcyImpWPD},

\begin{lemma}
    The action $G \curvearrowright \mathcal P_K(\mathcal F)$ satisfies WPD.
\end{lemma}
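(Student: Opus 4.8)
The plan is to assemble the statement directly from the structural facts about $\mathcal{P}_K(\mathcal{F})$ already recorded, together with Lemma \ref{Lem: AcyImpWPD}. First I would note that by Lemma \ref{Lem: PCQuasi-tree} the projection complex $\mathcal{P}_K(\mathcal{F})$ is a quasi-tree; since a quasi-tree is quasi-isometric to a tree and trees are $0$-hyperbolic, and Gromov-hyperbolicity is a quasi-isometry invariant of geodesic metric spaces, $\mathcal{P}_K(\mathcal{F})$ is $\delta$-hyperbolic for some $\delta \ge 0$. The same lemma records that $G$ acts non-elementarily and co-boundedly on $\mathcal{P}_K(\mathcal{F})$; in particular the action is unbounded and $G$ is not virtually cyclic.

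Next I would invoke the two cited results already stated in the excerpt. By \cite[Theorem 1.5]{Sis18}, the maximal elementary subgroup $E(g)$ of the contracting element $g$ furnished by Lemma \ref{Lem: UniformShortProjection} is hyperbolically embedded in $G$; then by \cite[Theorem 5.6]{BBFS19}, for $K$ sufficiently large (which we have assumed throughout), the action of $G$ on the associated projection complex $\mathcal{P}_K(\mathcal{F})$ is acylindrical. Combining this with the previous paragraph, $G \curvearrowright \mathcal{P}_K(\mathcal{F})$ is a \emph{non-elementary} acylindrical action on a hyperbolic space in the sense defined just before Lemma \ref{Lem: AcyImpWPD}.

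Finally, Lemma \ref{Lem: AcyImpWPD} applies directly: a non-elementary acylindrical action on a hyperbolic space is WPD. Hence $G \curvearrowright \mathcal{P}_K(\mathcal{F})$ satisfies WPD, which is the assertion. I do not anticipate a genuine obstacle here; the only point requiring a little care is verifying that the hypotheses of Lemma \ref{Lem: AcyImpWPD} (unboundedness of the action and $G$ not virtually cyclic) are met, and both follow at once from the non-elementarity clause in Lemma \ref{Lem: PCQuasi-tree}.
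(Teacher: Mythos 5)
Your proof is correct and follows exactly the paper's route: hyperbolic embeddedness of $E(g)$ via \cite[Theorem 1.5]{Sis18}, acylindricity of $G \curvearrowright \mathcal P_K(\mathcal F)$ via \cite[Theorem 5.6]{BBFS19}, and then Lemma \ref{Lem: AcyImpWPD}. You merely make explicit a couple of points the paper leaves implicit (hyperbolicity of the quasi-tree and the non-elementarity hypotheses), which is fine.
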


\begin{proof}[Proof of Theorem \ref{MainThm}]
    Recall that Lemma \ref{Lem: EllipticAction} shows that each $H_i(1\le i\le n)$ acts elliptically on  $\mathcal P_K(\mathcal F)$. Hence, the action $G \curvearrowright \mathcal P_K(\mathcal F)$ satisfies the setup of Section \ref{Sec: ConstructingQM}. Therefore, Theorem \ref{MainThm} follows from Proposition \ref{Prop: KeyProp}.
\end{proof}

A natural research direction to generalize our Theorem \ref{MainThm} is to assume instead that each subgroup $H_i$ is a subgroup with proper limit sets on a convergence boundary of $X$. See \cite{HYZ23, Yan22b} for more details about convergence boundary. A Morse subgroup with infinite index satisfies this property. Hence, one may wonder whether the following proposition is always true:
\begin{question}
    Let $G$ be a non-elementary countable  group acting properly on a geodesic metric space $X$ with convergence boundary. Let $H$ be a subgroup of $G$ with proper limit sets. Is the dimension of $H^{2}_b(G, H; \mathbb{R})$ as a vector space over $\mathbb{R}$  infinite?
\end{question}

At the end of this section, we provide an application of Theorem \ref{MainThm}.
\begin{definition}\label{Def: BoundedGeneration}
    A group $G$ is \textit{boundedly generated} by a finite collection of subgroups $H_1,\ldots, H_k$ if for every $g\in G$ there is a number $N$ such that all powers $g^n$ can be written in the form $$g^n=\prod_{i=1}^Nh_i(n)$$ where each $h_i(n)$ is conjugate to some element in $\cup_{1\le j\le k}H_j$.
\end{definition}
The above definition is a variation of \cite[Definition 4]{Kot04}. There Kotschick required each subgroup to be cyclic.
\begin{corollary}\label{MainCor}
Under the assumption of Theorem \ref{MainThm}, $G$ is not boundedly generated by $\{H_i: 1\le i\le n\}$.
\end{corollary}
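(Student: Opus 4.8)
The plan is to derive this from the homogeneous quasimorphisms already produced in the proof of Theorem \ref{MainThm}, via the now-classical link between bounded generation and homogeneous quasimorphisms exploited by Kotschick \cite{Kot04}. Recall that the proof of Theorem \ref{MainThm} places $G$ in the situation of Section \ref{Sec: ConstructingQM}, acting WPD on the hyperbolic quasi-tree $\mathcal P_K(\mathcal F)$ with each $H_i$ acting elliptically; in particular Proposition \ref{Prop: InfiniteQM} supplies a quasimorphism $h_1\colon G\to\mathbb R$ of finite defect (Lemma \ref{Lem: UniformDefect}) with $h_1(h)=0$ for every $h\in\bigcup_{j=1}^{n}H_j$, together with the element $\gamma:=f_1^{\,r_1}$ satisfying $h_1(\gamma^{m})\ge L_1 m$ for all $m\ge 0$.

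First I would pass to the homogenization $\bar h_1(x):=\lim_{m\to\infty}h_1(x^{m})/m$ (Remark \ref{Rmk: HomogeneousQG}), so that $\bar h_1$ is a homogeneous quasimorphism with $\Delta(\bar h_1)\le 2\Delta(h_1)<\infty$. Two facts need to be recorded. First, $\bar h_1(\gamma)=\lim_{m}h_1(\gamma^{m})/m\ge L_1>0$, so $\bar h_1(\gamma)\ne 0$. Second, $\bar h_1$ vanishes on every conjugate of every element of $\bigcup_{j=1}^{n}H_j$: indeed $h^{m}\in H_j$ whenever $h\in H_j$, so $\bar h_1$ vanishes on $\bigcup_{j}H_j$, and a homogeneous quasimorphism is a class function (proved just before Remark \ref{Rmk: HomogeneousQG}), so the vanishing propagates to all conjugates. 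This is the one place where we genuinely use the quasimorphisms themselves rather than merely the bounded cohomology classes of Theorem \ref{MainThm}.

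Then I would argue by contradiction. Assume $G$ is boundedly generated by $\{H_i:1\le i\le n\}$, and apply Definition \ref{Def: BoundedGeneration} to $\gamma$: there is a fixed $N$ such that for every $m\ge 1$ we may write $\gamma^{m}=\prod_{k=1}^{N}u_k(m)$ with each $u_k(m)$ conjugate to an element of $\bigcup_{j=1}^{n}H_j$. Applying the defect inequality $N-1$ times and using $\bar h_1(u_k(m))=0$ gives
$$|\bar h_1(\gamma^{m})|\;\le\;\sum_{k=1}^{N}|\bar h_1(u_k(m))|+(N-1)\Delta(\bar h_1)\;=\;(N-1)\Delta(\bar h_1),$$
a bound independent of $m$. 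On the other hand, homogeneity yields $\bar h_1(\gamma^{m})=m\,\bar h_1(\gamma)$ with $\bar h_1(\gamma)>0$, so $|\bar h_1(\gamma^{m})|\to\infty$ as $m\to\infty$, a contradiction.

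I do not expect a genuine obstacle in this argument: all the substantive work — producing, under the hypotheses of Theorem \ref{MainThm}, a finite-defect quasimorphism that vanishes on each $H_i$ and is unbounded on the powers of a suitable element — is already contained in Proposition \ref{Prop: InfiniteQM}. The only point requiring care is the bookkeeping in the second paragraph: that homogenization preserves the vanishing on the $H_i$, and that the class-function property extends this vanishing to their conjugates, which is exactly what renders harmless the conjugation allowed in Definition \ref{Def: BoundedGeneration}.
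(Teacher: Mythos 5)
Your proposal is correct and follows essentially the same route as the paper: the paper also takes an unbounded quasimorphism from Proposition \ref{Prop: InfiniteQM} vanishing on each $H_i$, homogenizes it, uses that homogeneous quasimorphisms are class functions to kill the conjugates appearing in Definition \ref{Def: BoundedGeneration}, and derives the contradiction $m|\bar\phi(g)|\le N\Delta(\bar\phi)$. Your write-up is in fact slightly more careful than the paper's on the point that homogenization preserves vanishing on the subgroups (because powers of elements of $H_j$ stay in $H_j$), a step the paper leaves implicit.
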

\begin{proof}
    Suppose to the contrary that $G$ is boundedly generated by $\{H_i: 1\le i\le n\}$. Then for every $g\in G$, there exists an $N\in \mathbb N$ such that every power $g^m$ can be written as a product of $N$ elements in conjugations of $\cup_{1\le i\le n}H_i$.

    As Proposition \ref{Prop: InfiniteQM} shows, there is at least one unbounded quasimorphism $\phi$ on $G$ such that $\phi(h)=0$ for each $h\in H_i$ with $1\le i\le n$. Let $\bar \phi$ be the homogenization of $\phi$ given by Remark \ref{Rmk: HomogeneousQG}. Proposition \ref{Prop: InfiniteQM} also gives an element $g\in G$ such that $\bar \phi(g)>0$. Then there exists $N\in \mathbb N$ such that $g^m=h_1'\cdots h_N'$ for any $m>0$ and each $h_i'=g_ih_ig_i^{-1}$ with $g_i\in G, h_i\in \cup_{1\le j\le n}H_j$. Note that homogeneous quasimorphisms take constant values on conjugacy classes. Hence, $\bar \phi(h_i')=\bar \phi(h_i)=0$ for each $i$. Then one has that $$m|\bar \phi(g)|=|\bar \phi(g^m)|=|\bar \phi(h_1'\cdots h_N')|\le |\bar \phi(h_1'\cdots h_{N-1}')|+\Delta(\bar \phi)\le \cdots\le N\Delta(\bar \phi)$$ where $\Delta(\bar \phi)$ is the defect of $\bar \phi$.
    By letting $m\to \infty$, we reach a contradiction. Hence, we complete the proof.
\end{proof}

\section{Rotation family and relative bounded cohomology}\label{sec: RotationFamily}
In a group $G$, the normal closure of an element $g$ is denoted as $\langle\langle g\rangle\rangle$. The goal of this section is as follows:
\begin{proposition}\label{Prop: NormalClosure}
    Let $G$ be a non-elementary countable group acting properly on a geodesic metric space space $X$ with contracting elements. Then for any contracting element $g\in G$, there exists $k=k(g)>0$ such that the dimension of $H^{2}_b(G, \langle\langle g^k\rangle\rangle; \mathbb{R})$ as a vector space over $\mathbb{R}$ has the cardinality of the continuum.
\end{proposition}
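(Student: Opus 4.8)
The plan is to exhibit the quotient $\bar G := G/\langle\langle g^k\rangle\rangle$ as a group admitting a WPD action on a hyperbolic space, and then to conclude by Proposition \ref{Prop: WPDQuotient} applied with $H=\langle\langle g^k\rangle\rangle$. Thus everything reduces to building the right hyperbolic space and action.

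First I would replace the projection complex by its blow-up. Starting from the contracting element $g$, Lemma \ref{Lem: one element} says $\F=\{f\,\ax(g):f\in G\}$ satisfies the projection axioms, so one may form not only the projection complex $\P_K(\F)$ but also the associated \emph{quasi-tree of spaces} $\C(\F)$ in the sense of Bestvina--Bromberg--Fujiwara, whose vertex spaces are the translates $f\,\ax(g)\subset X$ glued along their closest-point projections dictated by $\P_K(\F)$. The first task is to check that for $K\gg 0$ the space $\C(\F)$ is a quasi-tree (hence hyperbolic) on which $G$ acts coboundedly, that each vertex space is quasi-convexly embedded, and that the action is acylindrical; the last point follows exactly as in Section \ref{Sec: MainThm}, from the fact that $E(g)$ is hyperbolically embedded in $G$ together with \cite[Theorem 5.6]{BBFS19}. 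The advantage of $\C(\F)$ over $\P_K(\F)$ is that $g$, and more generally each conjugate $f g^k f^{-1}$, acts \emph{loxodromically} on the vertex space $f\,\ax(g)$ rather than elliptically, which is what makes a rotation meaningful after coning off.

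Next I would scale and cone off. Fixing a large scaling parameter, replace the metric on $\C(\F)$ by the rescaled one and form the Gromov cone-off $\dot{\C}(\F)$ obtained by attaching a hyperbolic cone to each vertex space $f\,\ax(g)$. Choosing the scaling large compared with the hyperbolicity and fellow-travelling constants, $\dot{\C}(\F)$ becomes hyperbolic with a small hyperbolicity constant relative to the geometry at the cone apices, and one can then choose $k=k(g)$ large enough, so that the translation length of $g^k$ along $\ax(g)$ exceeds the threshold dictated by \cite{DGO17}, to ensure that the family $\{\,f\langle g^k\rangle f^{-1}:f\in G\,\}$, with $f\langle g^k\rangle f^{-1}$ acting by rotations about the apex of $f\,\ax(g)$, is a \emph{very rotating family} on $\dot{\C}(\F)$. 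I expect this to be the main obstacle: it requires verifying the separation and rotation hypotheses of Dahmani--Guirardel--Osin, using the bounded-projection property of $\F$ and the Extension Lemma (Lemma \ref{Lem: ExtensionLemma}) to control how far the rotation subgroups move distinct apices, and arranging the quantifiers on the scaling constant and on $k$ in the correct order.

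Finally, the rotating-family machinery of \cite{DGO17} finishes the argument. The normal subgroup of $G$ generated by all the rotation subgroups $f\langle g^k\rangle f^{-1}$ is precisely $\langle\langle g^k\rangle\rangle$; the quotient $\bar G=G/\langle\langle g^k\rangle\rangle$ acts on $\dot{\C}(\F)/\langle\langle g^k\rangle\rangle$, which is again hyperbolic; $\bar G$ is not virtually cyclic, since two weakly-independent contracting elements of $G$ not conjugate into any $f\langle g^k\rangle f^{-1}$ descend to independent loxodromic isometries of the quotient; and the quotient action is acylindrical, hence WPD by Lemma \ref{Lem: AcyImpWPD}. Proposition \ref{Prop: WPDQuotient} then yields that $\dim_{\mathbb R} H^2_b(G,\langle\langle g^k\rangle\rangle;\mathbb R)$ has the cardinality of the continuum, which is the assertion.
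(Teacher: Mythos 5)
Your proposal follows essentially the same route as the paper: form the quasi-tree of spaces $\C(\F)$ on which $G$ acts acylindrically with $g$ loxodromic, rescale and cone off along $\F$, choose $k=k(g)$ so that $\{f\langle g^k\rangle f^{-1}\}$ is a very rotating family (the paper outsources this verification to \cite[Lemma 5.3]{HLY20}), invoke the Dahmani--Guirardel--Osin machinery to get an acylindrical, hence WPD, action of $G/\llangle g^k\rrangle$ on the hyperbolic quotient, and conclude via Proposition \ref{Prop: WPDQuotient}. The only cosmetic discrepancy is that acylindricity of $G\curvearrowright\C(\F)$ is cited in the paper from \cite[Theorem 6.9]{BBFS19} rather than the projection-complex statement \cite[Theorem 5.6]{BBFS19}, but the argument is the same.
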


\revise{
\noindent\textbf{Proof ideas of Proposition \ref{Prop: NormalClosure}:}
 Since $\llangle g^k\rrangle$ is  a normal subgroup of $G$, by Proposition \ref{Prop: WPDQuotient}, we only need to find a hyperbolic space such that the quotient group $G/\llangle g^k\rrangle$ acts acylindrically on it. According to the theory of rotating family developed by Dahmani-Guirardel-Osin \cite{DGO17}, we need to find a hyperbolic space such that $G$ acts acylindrically on it and $(\F=\{f\ax(g): f\in G\}, \{f\langle g^k\rangle f^{-1}: f\in G\})$ forms a rotating family. To obtain such a hyperbolic space, we need a construction of quasi-trees of spaces $\C(\F)$, which can be seen as a blow-up of the projection complex $\P_K(\F)$. \cite[Theorem 6.9]{BBFS19} shows that $\C(\F)$ is a quasi-tree on which $G$ acts acylindrically and $g$ is a loxodromic element. In order to get a suitable rotating family, we will consider a cone-off space $\Dot Z_r(\F)$ with apexes $\F$ over a scaled metric space $(\C(\F),l\cdot d_{\C})$. For $r \gg 0$, $\Dot{Z}_r(\F)$ is also hyperbolic \cite[Corollary 5.39]{DGO17}. Moreover, \cite[Lemma 5.3]{HLY20} shows that $(\F, \{f\langle g^k\rangle f^{-1}: f\in G\})$ is a suitable rotating family on $\Dot Z_r(\F)$. As a result of \cite[Proposition 5.28]{DGO17}, we get that $\Dot Z_r(\F)/\llangle g^k\rrangle$ is still hyperbolic. Finally, we verify that both the extended action $G\curvearrowright \Dot Z_r(\F)$ and the quotient action $G/\llangle g^k\rrangle\curvearrowright \Dot Z_r(\F)/\llangle g^k\rrangle$ are acylindrical.
}

\subsection{\revise{Quasi-trees} of spaces}
\revise{Fix a contracting element $g\in G$. We denote $\F=\{f\ax(g): f\in G\}$. Subsection \ref{subsec: ProjectionComplex} gives a projection complex $\P_K(\F)$ whose vertex set is $\F$ and two vertices $U,V\in \F$ are connected by an edge if and only if $\mathcal F_K(U,V) := \{W \in \mathcal F : d_W (U,V) > K\}=\emptyset$.} Fix a positive number $L$ such that $1/2K \le L \le 2K$. We now define a blowup
version, $\mathcal C(\mathcal F)$, of the projection complex $\mathcal P_K(\mathcal F)$ by preserving the geometry of each $U \in \mathcal F$. Namely,
we replace each $U \in \mathcal F$, a vertex in $\mathcal P_K(\mathcal F)$, with the corresponding subspace $U \subset X$, while maintaining the adjacency
relation in $\mathcal P_K(\mathcal F)$: if $U$ and  $V$ are adjacent in $\mathcal P_K(\mathcal F)$ (i.e., $d_{\mathcal P} (U, V ) = 1$), then we attach an edge of length
$L$ from every point $u \in \pi_U (V)$ to $v \in \pi_V (U)$. This choice of $L$, as stated in \cite[Lemma 4.2]{BBF15}, ensures that $U \subset X$ is geodesically embedded in $\mathcal C(\mathcal F)$ (so the index $L$ is omitted here).

For any contracting element $g\in G$, the infinite cyclic subgroup $\langle g\rangle$ is of finite index in $E(g)$ by \cite[Lemma 2.11]{Yan19}, so $\ax(g) = E(g)o$ is quasi-isometric to a line $\mathbb R$. Thus, the set $\mathcal F$ (derived from Lemma \ref{Lem: one element})  consists of uniform quasi-lines. By \cite[Theorem B]{BBF15}, we have the following:

\begin{theorem}\cite{BBF15}\label{Thm: C(F)}
    The quasi-tree of spaces $\mathcal C(\mathcal F)$ is a quasi-tree of infinite diameter, with each $ U \in \mathcal F$ totally geodesically embedded into $\mathcal C(\mathcal F)$. Moreover, the shortest projection from $U$ to $V$ in $\mathcal C(\mathcal F)$ agrees with the projection $\pi_U (V)$ up to a uniform finite Hausdorff distance.
\end{theorem}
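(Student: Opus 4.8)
The construction of $\mathcal C(\mathcal F)$ is already in place, so the plan is to verify in turn the three features asserted, using two structural inputs: by Lemma \ref{Lem: one element} the family $\mathcal F$ satisfies the projection axioms for some $\kappa>0$, and since $[E(g):\langle g\rangle]<\infty$ every $U=f\ax(g)$ is a quasi-line with constants independent of $f$, hence a quasi-tree with uniform constants. I would first check that each $U$ is \emph{totally geodesically embedded}: if a geodesic of $\mathcal C(\mathcal F)$ joining $u,u'\in U$ left $U$, it would do so through at least two attaching edges, of total length $\ge 2L\ge K$, and the projection axioms bound the distance in $U$ between the point where it leaves and the point where it re-enters; hence for $L$ in the stated range the excursion is strictly longer than the geodesic of $U$, a contradiction — this is the content of \cite[Lemma 4.2]{BBF15}. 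Since each $U$ is an unbounded quasi-line, this also shows that $\mathcal C(\mathcal F)$ has infinite diameter.

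For the quasi-tree property I would use the bottleneck criterion of Manning. The starting observation is that a geodesic from $x\in U$ to $y\in V$ has a canonical coarse shape: it meets exactly the fibers $W\in\mathcal F_K[U,V]$, in the order given by the total order of Lemma \ref{TotalOrder}, entering and leaving each $W$ within a uniform distance of $\pi_W(U)$ and $\pi_W(V)$ and crossing a subsegment of $W$ of length comparable to $d_W(U,V)$, these segments joined by attaching edges; this is essentially the lifted standard path of Lemma \ref{Lem: AdmissiblePath}, which is a quasi-geodesic. The heart of the matter is to show that an \emph{arbitrary} path from $x$ to $y$ must visit each such $W$ and, inside $W$, travel essentially along that same subsegment: the former because exiting $W$ far from its projections would contradict the projection axioms via the triangle inequality (\ref{ProjectionTriangleInequa}), the latter because $W$ is a quasi-line and offers no alternative route. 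Hence every path from $x$ to $y$ passes uniformly close to the midpoint of $[x,y]$, and the criterion applies; equivalently one simply invokes \cite[Theorem B]{BBF15}, whose hypotheses are precisely the two inputs above.

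For the agreement of projections, take $v\in V$ and a point $p\in U$ realizing $d_{\mathcal C(\mathcal F)}(v,U)$. A geodesic $[v,p]$ enters $U$ through the attaching edge coming from the last fiber $W$ preceding $U$ in the standard path $\mathcal F_K[V,U]$, so $p$ lies within $\kappa$ of $\pi_U(W)$; since $W\in\mathcal F_K[V,U]$, Lemma \ref{TotalOrder} together with (\ref{ProjectionTriangleInequa}) bounds $d_U(W,V)$ by $D$, so $\pi_U(W)$ and $\pi_U(V)$ lie within $D+\kappa$ of each other. The reverse inclusion, that each point of $\pi_U(V)$ is a coarsely closest point of $U$ to $v$, follows from the same reasoning, and together they bound the Hausdorff distance between the $\mathcal C(\mathcal F)$-shortest projection of $V$ onto $U$ and $\pi_U(V)$ by a constant depending only on $\kappa$.

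The main obstacle is the bottleneck step: one must control \emph{all} paths between two points, not merely geodesics, which means converting the finiteness and separation content of the projection axioms into a genuine lower bound on the length of any excursion away from the standard path, while at the same time using that each fiber is a quasi-line — not just a quasi-geodesic space — so that no fiber conceals a shortcut. By contrast, the total geodesic embedding of the fibers and the comparison of projections are immediate once the role of the constant $L$ and of inequality (\ref{ProjectionTriangleInequa}) has been isolated.
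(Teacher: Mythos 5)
This statement is not proved in the paper at all: it is imported wholesale from Bestvina--Bromberg--Fujiwara \cite{BBF15} (the total geodesic embedding is their Lemma~4.2, the quasi-tree property is their Theorem~B, and the projection comparison is part of the same circle of results), so there is no in-paper argument to measure your proposal against. Judged on its own terms, your sketch correctly identifies the three ingredients and the two structural inputs (the projection axioms from Lemma~\ref{Lem: one element} and the uniform quasi-line structure of the fibers), and the first and third steps are reasonable outlines of the arguments in \cite{BBF15}.

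Two caveats. First, the quasi-tree step is not actually an argument: after gesturing at the bottleneck criterion you conclude by ``simply invoking \cite[Theorem B]{BBF15},'' which is precisely the statement under discussion, so the part you yourself flag as ``the heart of the matter'' --- forcing an \emph{arbitrary} path from $x$ to $y$ through a uniform neighborhood of each $W\in\mathcal F_K[U,V]$, using axiom~(2) of Definition~\ref{Def: ProjectionAxioms} and the length-$L$ attaching edges to make any detour expensive --- is left unproved rather than proved differently. Second, in the projection-comparison step you assume the last fiber $W$ visited before a geodesic enters $U$ lies in $\mathcal F_K[V,U]$, which is needed to apply Lemma~\ref{TotalOrder} and get $d_U(V,W)\le D$; a geodesic could a priori enter $U$ through an edge from a fiber adjacent to $U$ but outside the standard interval, and ruling this out (or handling it via the triangle inequality~(\ref{ProjectionTriangleInequa}) and axiom~(2)) requires an extra argument. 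Since the authors use Theorem~\ref{Thm: C(F)} as a black box, deferring these points to \cite{BBF15} is consistent with the paper's own treatment, but they are genuine gaps if the sketch is meant to stand alone.
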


Any two points $u \in U$ and $v \in V$ in $\mathcal C(\mathcal F)$ are connected via a standard path obtained from the standard path
$\alpha$ between $U$ and $V$ in $\mathcal P_K(\mathcal F)$, which passes through each vertex space $U$ on $\alpha$ via a geodesic in $U$ (see \cite[Definition 4.3]{BBF15} for more details). Hence, standard paths in $\mathcal C(\mathcal F)$ are also uniform quasi-geodesics.

\subsection{Hyperbolic cone-off and rotation family}
We first introduce a construction of a hyperbolic metric space by conning off a collection of
\revise{Morse} subsets from a given hyperbolic space.

Let $Z$ be a hyperbolic space with a collection $\mathcal F$ of uniformly Morse subsets, which means that these Morse subsets have a uniform Morse gauge \rev{(cf. Definition \ref{Def: MorseSubgp})}. Assume that $\mathcal F$ has bounded intersection \revise{(cf. Definition \ref{Def: BoundedIntersection})}. For $r \ge 0$, we first define the
hyperbolic cone-off $\Dot{Z}_r(\F)$ of $Z$ along $\F$.

For each $U\in \F$, the \textit{hyperbolic cone} $C_r(U)$ is the quotient space of the product
$U \times [0, r]$ by collapsing $U\times 0$. The collapsed point denoted by $a(U)$ is called the \textit{apex} of the cone , and $U \times 1$ the \textit{base} of the cone. The  cone is equipped with a geodesic metric such that it is the metric completion of the universal covering of a closed hyperbolic
disk punctured at the origin.

The \textit{hyperbolic cone-off} $\Dot{Z}_r(\F)$ is the quotient space of the disjoint union
$$Z\coprod_{U\in\F}C_r(U)$$ by gluing $U$ with the base of the cone $C_r(U)$, equipped with the length metric. Since $\F$ has bounded intersection, for $r \gg 0$, $\Dot{Z}_r(\F)$ is also hyperbolic \cite[Corollary 5.39]{DGO17}.

Assume that $G$ acts isometrically on $Z$ and leaves $\F$  invariant. The action naturally extends by isometry to the hyperbolic cone $C_r(U)$ by the rule $g(x, t) = (gx, t)$
for any $g \in G, x \in U, 0 \le t \le r$. This is a prototype of the notion of a rotating family
introduced in \cite{DGO17}.

\begin{definition}
    Assume $G$ acts isometrically on a metric space $\Dot{Z}$. Let $A$ be a $G$-invariant set in $\Dot Z$ and a collection of subgroups $\{G_a: a \in A\}$ of $G$ such that $G_a(a) = a, gG_ag^{-1} = G_{ga}$ for any $a \in A, g \in G$. We call such a pair $(A, \{G_a: a \in A\})$ a \textit{rotating family}.
\end{definition}

Returning to the above cone-off construction, the apexes $A(\F) = \{a(U): U\in \F\}$ and the stabilizers $G_a$ for $a \in A(\F)$ together consist of a rotating family. Moreover, we say that $A$ is \textit{$\rho$-separated} if any two distinct apexes are at distance at least $\rho$.

Roughly speaking, a rotating family $(A, \{G_a : a \in A\})$ is called \textit{very rotating}
if every nontrivial element in $G_a$ rotates around $a$ with a very large angle. This big angle is
usually achieved by taking a sufficiently deep subgroup (which is generated by a higher power of some element) of $G_a$.

\subsection{Proof of Proposition \ref{Prop: NormalClosure}}
From now on, we suppose that $G$ is a non-elementary countable group acting properly on a geodesic metric space $X$ with contracting elements. Fix a base point $o\in X$ and a contracting element $g\in G$. Denote $\mathcal F = \{f\ax(g) : f \in G\}$. Theorem \ref{Thm: C(F)} produces a quasi-tree of space $\mathcal C(\mathcal F)$ in which each $G$-translate of $\ax(g)$ is totally geodesically embedded.

\begin{lemma}\cite[Theorem 6.9]{BBFS19}\label{Lem: AcyActionOnC(F)}
    For $K\gg 0$, $\mathcal C(\mathcal F)$ is a quasi-tree on which $G$ acts acylindrically and $g$ is a loxodromic element on $\mathcal C(\mathcal F)$.
\end{lemma}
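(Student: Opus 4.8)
The two easy assertions come first. Since every element of $\F$ is a $G$-translate $f\ax(g)=fE(g)o$ of the $E(g)$-cocompact orbit $E(g)o$, the family $\F$ consists of uniform quasi-lines, so $\C(\F)$ is a quasi-tree of infinite diameter by Theorem \ref{Thm: C(F)}. Moreover $\ax(g)=E(g)o$ is totally geodesically embedded in $\C(\F)$ (again Theorem \ref{Thm: C(F)}) and $\langle g\rangle$ has finite index in $E(g)$, so $n\mapsto g^no$ is a quasi-geodesic in $\C(\F)$ and $g$ is loxodromic there with positive translation length by Lemma \ref{Lem: StableLength}. The content of the lemma is the acylindricity of $G\curvearrowright\C(\F)$, and the plan is to derive it from two ingredients already available here: the acylindricity of $G\curvearrowright\P_K(\F)$ (from \cite[Theorem 1.5]{Sis18} together with \cite[Theorem 5.6]{BBFS19}), and the properness of $G\curvearrowright X$.

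First I would record the two structural facts that make the reduction work. The stabilizer in $G$ of a vertex space $U=f\ax(g)$ of $\C(\F)$ is $\mathrm{Stab}_G(U)=fE(g)f^{-1}$, and it acts metrically properly and cocompactly on $(U,d_X|_U)$ because $G\curvearrowright X$ is proper and $U=fE(g)o$; since all vertex spaces are $G$-translates of $\ax(g)$, these properness and cocompactness constants are uniform over $U\in\F$. Second, collapsing each vertex space $U$ to the point $U\in\P_K(\F)$ gives a coarsely Lipschitz, coarsely $G$-equivariant map $p\colon\C(\F)\to\P_K(\F)$ which sends a standard path in $\C(\F)$ to the corresponding standard path in $\P_K(\F)$; recall that standard paths in $\C(\F)$ are uniform quasi-geodesics and that standard paths in $\P_K(\F)$ are $(2,1)$-quasi-geodesics.

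Now fix $C>0$ and consider $x,y\in\C(\F)$ with $d_{\C}(x,y)\ge R$ (with $R=R(C)$ to be determined) and $g'\in G$ with $d_{\C}(x,g'x),d_{\C}(y,g'y)\le C$. Take a standard path $\gamma=[x,y]=\gamma_0e_1\gamma_1\cdots e_m\gamma_m$, where $\gamma_i$ is the geodesic subsegment of $\gamma$ inside a vertex space $U_i$ and the $e_i$ are the connecting edges of length $L$. Because $\gamma$ is a uniform quasi-geodesic in the hyperbolic space $\C(\F)$ and $g'$ moves its endpoints by at most $C$, the path $g'\gamma$ $C_1$-fellow-travels $\gamma$ in an orientation-preserving way, for a constant $C_1=C_1(C)$. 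I would then split into two cases. If $m>m_0$ for a threshold $m_0$ chosen below, then $p(\gamma)$ is a standard path between $U_0$ and $U_m$ in $\P_K(\F)$ with $d_{\P}(U_0,U_m)$ comparable to $m$, while $d_{\P}(U_0,g'U_0)$ and $d_{\P}(U_m,g'U_m)$ are bounded in terms of $C$ by coarse $G$-equivariance of $p$; choosing $m_0$ large enough in terms of $C$ and the acylindricity data of $G\curvearrowright\P_K(\F)$, that acylindricity bounds the number of such $g'$. If $m\le m_0$, then $d_{\C}(x,y)\ge R$ forces some $\gamma_i$ to be a geodesic of length $\ge(R-m_0L)/(m_0+1)$ inside $U_i$, which is large. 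For $K\gg 0$, distinct vertex spaces have uniformly bounded coarse intersection in $\C(\F)$ (this follows from the projection axioms of Lemma \ref{Lem: one element} together with the total geodesic embedding of vertex spaces), so the portion of $\gamma$ that $C_1$-fellow-travels the long segment $g'\gamma_i\subset g'U_i$ must lie inside a single vertex space; hence $g'U_i$ equals one of the at most $m_0+1$ vertex spaces $U_j$ with $\gamma_j$ long, and $g'\gamma_i$ is forced to $C_1$-fellow-travel $\gamma_j$. For each admissible target $j$, after composing with the inverse of one fixed such element the remaining element lies in $\mathrm{Stab}_G(U_i)$ and moves both endpoints of the long segment $\gamma_i$ by a bounded amount inside $U_i$, so by the uniform properness of $\mathrm{Stab}_G(U_i)\curvearrowright U_i$ there are uniformly boundedly many possibilities. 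Combining the two cases yields a bound $N(C)$ on the number of such $g'$ once $d_{\C}(x,y)\ge R(C)$, which is exactly acylindricity; this recovers \cite[Theorem 6.9]{BBFS19}.

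The step I expect to be the main obstacle is the ``uniformly bounded coarse intersection of distinct vertex spaces of $\C(\F)$'' used in the bounded-$m$ case, and, more broadly, checking that every constant above — the fellow-traveling constant, the coarse-intersection bound, the properness constants of the vertex actions, and the coarse Lipschitz constant of $p$ — can be chosen uniformly, independently of $x$, $y$, and the vertex spaces involved. This is precisely the place where $K$ must be taken sufficiently large so that the projection-axiom constants and the blow-up geometry of $\C(\F)$ cooperate; carrying this bookkeeping out carefully is the technical heart of the argument, whereas the logical structure of the reduction is straightforward.
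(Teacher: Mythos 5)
The paper offers no proof of this lemma at all: it is imported verbatim from \cite[Theorem 6.9]{BBFS19}, so there is no internal argument to compare yours against. Your reconstruction does follow the strategy behind the cited result --- combine acylindricity of $G\curvearrowright \mathcal P_K(\mathcal F)$ (which the paper separately imports in Section 5 from \cite[Theorem 5.6]{BBFS19} together with \cite{Sis18}) with uniform properness of the stabilizer actions on the vertex spaces, via the dichotomy ``the standard path crosses many vertex spaces'' versus ``some single vertex segment is long''. The quasi-tree statement and the loxodromy of $g$ are fine in substance, although Lemma \ref{Lem: StableLength} is not the right tool for positivity of $\Vert g\Vert$ in $\mathcal C(\mathcal F)$: that lemma gives an upper bound on $\Vert g\Vert$ in terms of displacements, whereas what you need is the lower bound $d_{\mathcal C}(o,g^no)\ge n/\lambda-\epsilon$ coming from the quasi-isometric embedding of $\langle g\rangle o$ into the totally geodesically embedded $\ax(g)$.

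There is one concrete gap in your bounded-$m$ case. Knowing only that $g'\gamma_i$ fellow-travels $\gamma$ in the Hausdorff sense and that $g'U_i=U_j$ does \emph{not} bound the displacement of $a_i$ under $g''^{-1}g'$ for two such elements $g',g''$: the points $g'a_i$ and $g''a_i$ are each within $C_1$ of the long segment $\gamma_j$, but they may sit at opposite ends of it, so the ``remaining element'' of $\mathrm{Stab}_G(U_i)$ can move $a_i$ arbitrarily far and properness then gives no bound. The repair is the synchronized form of fellow-traveling: since $g'$ is an isometry, $\gamma$ and $g'\gamma$ are uniform quasi-geodesics with identical distance profiles from their initial points and with endpoints displaced by at most $C$, whence $d_{\mathcal C}(q,g'q)\le C_1(C,\lambda,\delta)$ for \emph{every} $q\in\gamma$, in particular for the endpoints $a_i,b_i$ of the long segment. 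With this the case closes more simply than you propose: either $g'U_i=U_i$, so $g'\in\mathrm{Stab}_G(U_i)$ displaces $a_i\in U_i$ by at most $C_1$ inside the isometrically embedded vertex space and the uniform properness of $E(g)\curvearrowright\ax(g)$ bounds the count; or $g'U_i\ne U_i$, and then $a_i,b_i$ both lie in $N_{C_1}(U_i)\cap N_{C_1}(g'U_i)$ while being far apart, contradicting bounded coarse intersection of distinct vertex spaces once $R$ is large. That bounded-coarse-intersection statement, which you rightly flag, is genuine input that must be extracted from the projection axioms and the distance formula for $\mathcal C(\mathcal F)$ with constants uniform over $\mathcal F$. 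As a blueprint your argument is viable and matches the reference's approach, but without these two repairs it is not yet a proof.
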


Denote by $d_{\mathcal C}$ the metric on $\mathcal C(\mathcal F)$. The following result gives a way to produce a very rotating family on some cone-off of a ``scaled'' quasi-tree of spaces. Here, a scaled metric means a constant multiple of the original metric. By scaling the metric of a hyperbolic space, one can require the hyperbolicity constant to be uniform.
\begin{lemma}\cite[Lemma 5.3]{HLY20}\label{Lem: RotFam}
    There exist universal constants $\delta_U > 0, r > 20\delta_U$ and $k=k(g), l=l(g)>0$ with the following property. Consider the cone-off space $\Dot{Z}_r(\mathcal F)$ with apexes $A(\F)$ over the scaled metric space $Z_l = (\C(\F),l\cdot d_{\C})$. For every $n\ge 1$, set $$E_n=\{f\langle g^{nk}\rangle f^{-1}:f\in G\}.$$ Then $(A(\F),E_n)$ is a $2r$-separated very rotating family on the $\delta_U$-hyperbolic space $\Dot{Z}_r(\mathcal F)$.
\end{lemma}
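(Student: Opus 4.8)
The plan is to verify directly the two defining features of a very rotating family — separation of the apexes and the large-angle (rotation) condition — for the coned-off space produced by \cite[Corollary 5.39]{DGO17}, after first recording that the algebraic axioms of a rotating family hold automatically. Recall from Lemma \ref{Lem: AcyActionOnC(F)} that $g$ acts loxodromically on $\C(\F)$, so its translation length $\Vert g\Vert_{\C}$ there is positive; by Theorem \ref{Thm: C(F)} each $U=f\ax(g)\in\F$ is totally geodesically embedded as a quasi-line and distinct members of $\F$ meet with uniformly bounded intersection (Definition \ref{Def: BoundedIntersection}), so $\F$ is a family of uniformly Morse subsets of $\C(\F)$ (Definition \ref{Def: MorseSubgp}). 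Passing to the scaled space $Z_l=(\C(\F),\,l\cdot d_\C)$ multiplies all distances, hyperbolicity constants, and intersection gauges by $l$; choosing $l$ small enough makes the hyperbolicity constant and the projection/intersection data of $\F$ in $Z_l$ as small as required by the hypotheses of \cite[Corollary 5.39]{DGO17}, which then yields a universal constant $\delta_U$ and a cone radius $r>20\delta_U$ for which $\Dot Z_r(\F)$ is $\delta_U$-hyperbolic. Finally, for $U=f\ax(g)$ the subgroup $f\langle g^{nk}\rangle f^{-1}\le fE(g)f^{-1}$ preserves $U$ (it translates along $\ax(g)$), hence fixes the apex $a(U)$, and the family $E_n$ is $G$-invariant with $h\,(f\langle g^{nk}\rangle f^{-1})\,h^{-1}=(hf)\langle g^{nk}\rangle(hf)^{-1}=G_{a(hU)}$; thus $(A(\F),E_n)$ satisfies the algebraic axioms of a rotating family.

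For the separation statement, observe that each cone $C_r(U)$ is attached to $Z_l$ only along its base $U$, so any path between two distinct apexes $a(U)$ and $a(V)$ must leave $C_r(U)$ through $U$ and enter $C_r(V)$ through $V$, traversing radial distance at least $r$ inside each cone. Hence $d_{\Dot Z_r(\F)}(a(U),a(V))\ge 2r$, and since distinct translates $U\neq V$ in $\F$ lie at positive $Z_l$-distance (adjacent vertex spaces of $\C(\F)$ are joined by edges of positive length, and non-adjacent ones are farther), the inequality is in fact strict. Therefore $A(\F)$ is $2r$-separated.

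The essential point is the rotation (large-angle) condition. Fix $U=f\ax(g)$ with apex $a=a(U)$ and a nontrivial $h=f g^{nkm}f^{-1}\in G_a$ (so $m\neq 0$). In the hyperbolic cone $C_r(U)$ the angle subtended at $a$ by two base points $u,u'\in U$ is an increasing function of $d_{Z_l}(u,u')$ that saturates at $\pi$ once $d_{Z_l}(u,u')$ exceeds a threshold $\theta_0=\theta_0(r)$ depending only on the cone radius. Now $h$ translates $U=f\ax(g)$ along itself, and its translation length on $\C(\F)$ is $\Vert h\Vert_{\C}=|nkm|\,\Vert g\Vert_{\C}$; by Lemma \ref{Lem: StableLength} this gives $d_\C(u,hu)\ge |m|\,nk\,\Vert g\Vert_{\C}\ge nk\,\Vert g\Vert_{\C}$ for every $u\in U$, whence $d_{Z_l}(u,hu)\ge l\,nk\,\Vert g\Vert_{\C}$. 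Choosing $k=k(g)$ so large that $l\,k\,\Vert g\Vert_{\C}\ge\theta_0$ forces, for every $n\ge 1$ and every nontrivial $h\in G_a$, the angle at $a$ between $u$ and $hu$ to be maximal; that is, $h$ rotates the base of the cone by angle $\pi$. Because this estimate is $G$-equivariant and uniform in $U$, $n$, and $h$, it verifies the very-rotating inequality of \cite[Definition 5.1]{DGO17} on the $\delta_U$-hyperbolic space $\Dot Z_r(\F)$, the condition $r>20\delta_U$ guaranteeing that the geometric window near each apex in which the definition is tested lies inside the cone region where this angle computation applies.

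I expect the main obstacle to be the third paragraph: converting the coarse statement ``$h$ translates the base by a large amount'' into the precise geodesic/Gromov-product inequality of the DGO very-rotating definition, uniformly over all apexes and all $n\ge 1$. This requires the exact relation between base-distance and apex-angle in the hyperbolic cone (via the hyperbolic law of cosines) together with simultaneous bookkeeping of the scaling $l$, the cone radius $r$, and the universal hyperbolicity constant $\delta_U$, so that all three cone-off requirements hold at once: small enough input hyperbolicity for \cite[Corollary 5.39]{DGO17}, $r>20\delta_U$, and a rotation angle above $\theta_0(r)$.
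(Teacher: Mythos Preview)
The paper does not supply its own proof of this lemma: it is quoted verbatim from \cite[Lemma 5.3]{HLY20}, so there is no in-paper argument to compare against. Your outline---scale $\C(\F)$ so that the cone-off hypotheses of \cite[Corollary 5.39]{DGO17} apply, observe that apexes are automatically $2r$-separated, then force the very-rotating condition by making the minimal translation $l\,k\,\Vert g\Vert_{\C}$ exceed a threshold depending only on $r$---is the standard route, and the algebraic and separation paragraphs are fine.

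There is one geometric misstatement worth correcting. In the hyperbolic cone $C_r(U)$ the angle at the apex between two base points does \emph{not} saturate at $\pi$; the cone is the metric completion of the universal cover of a punctured hyperbolic disk, so the cone angle at $a$ is infinite and the angle between base points grows linearly (roughly like $d_{Z_l}(u,u')/\sinh r$) without bound. What is true, and what you actually need, is that once this angle \emph{exceeds} $\pi$ the geodesic in $C_r(U)$ between points at comparable radii must pass through the apex. So the threshold $\theta_0(r)$ should be described as the base-distance beyond which geodesics go through $a$, not the distance at which the angle caps out. With that correction your third paragraph becomes the right heuristic; the remaining work---which you correctly flag in your final paragraph---is to translate ``geodesic through the apex'' into the precise Gromov-product inequality of the DGO very-rotating definition for points $x$ with $20\delta_U\le d(x,a)\le 40\delta_U$, and to check the constants can be chosen consistently. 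That is genuine bookkeeping (carried out in \cite{HLY20}) but contains no further ideas beyond what you have sketched.
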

As $G$ acts acylindrically on $(\C(\F), d_{\C})$\revise{, it is straightforward to verify that $G$ also acts acylindrically on $(\C(\F), l\cdot d_{\C})$. Moreover,} \cite[Proposition 5.40]{DGO17} shows that acylindricity is preserved by taking (suitable) cone-off, \revise{thus} one has
\begin{lemma}
    The extended action $G\curvearrowright \Dot{Z}_r(\mathcal F)$ is acylindrical.
\end{lemma}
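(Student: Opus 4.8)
The plan is to transport the acylindricity of $G\curvearrowright(\C(\F),d_{\C})$ supplied by Lemma \ref{Lem: AcyActionOnC(F)} first to the scaled space $Z_l=(\C(\F),l\cdot d_{\C})$ and then to the cone-off $\Dot{Z}_r(\F)$ via \cite[Proposition 5.40]{DGO17}. For the first step I would simply note that acylindricity is invariant under rescaling the metric by a fixed factor $l>0$: given $L>0$, apply the acylindricity of $G\curvearrowright(\C(\F),d_{\C})$ to the parameter $L/l$ to obtain constants $D_0,B_0>0$; then $D:=lD_0$ and $B:=B_0$ witness acylindricity of $G\curvearrowright Z_l$ for the parameter $L$, since $l\cdot d_{\C}(x,gx)\le L$ is equivalent to $d_{\C}(x,gx)\le L/l$ and $l\cdot d_{\C}(x,y)>lD_0$ is equivalent to $d_{\C}(x,y)>D_0$. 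This is the ``straightforward'' step referred to above.

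For the second step I would invoke \cite[Proposition 5.40]{DGO17}, which asserts that coning off a hyperbolic space along a $G$-invariant family of uniformly quasiconvex subsets with bounded intersection, at a sufficiently large radius, preserves acylindricity of the $G$-action. The hypotheses hold in our setting: $Z_l$ is hyperbolic and $\F=\{f\ax(g):f\in G\}$ is a single $G$-orbit; each member of $\F$ is a translate of the quasi-line $\ax(g)=E(g)o$, which is totally geodesically embedded in $\C(\F)$ by Theorem \ref{Thm: C(F)} and hence remains uniformly quasiconvex in $Z_l$; $\F$ has bounded intersection, coming from the projection axioms of Lemma \ref{Lem: one element} (equivalently from the contracting/Morse structure, cf.\ Remark \ref{Rmk: FiniteHausDist}); the stabilizer $E(g)$ of $\ax(g)$ is virtually cyclic and acts cocompactly on $\ax(g)$; and $r$ is taken $\gg 0$, for instance the value $r>20\delta_U$ already fixed in Lemma \ref{Lem: RotFam}, which also ensures $\Dot{Z}_r(\F)$ is hyperbolic (Lemma \ref{Lem: RotFam}, or \cite[Corollary 5.39]{DGO17}). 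Granting these, \cite[Proposition 5.40]{DGO17} yields that the extended action $G\curvearrowright\Dot{Z}_r(\F)$ is acylindrical, which is the assertion of the lemma.

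The only point requiring care is matching the precise hypotheses of \cite[Proposition 5.40]{DGO17} with the present setup, in particular checking that the uniform quasiconvexity and separation constants for $\F$ survive the rescaling by $l$ and land in the range demanded by that proposition. But this bookkeeping has in effect already been carried out: Lemma \ref{Lem: RotFam} establishes that $(\F,E_n)$ is a $2r$-separated very rotating family on the $\delta_U$-hyperbolic cone-off $\Dot{Z}_r(\F)$, and that conclusion presupposes exactly the geometric control (hyperbolicity of $\Dot{Z}_r(\F)$, uniform quasiconvexity and bounded intersection of $\F$, large $r$) that \cite[Proposition 5.40]{DGO17} requires. Consequently I do not expect a genuine obstacle; the argument amounts to a citation once the rescaling observation is recorded.
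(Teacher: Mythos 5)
Your proposal matches the paper's argument exactly: the paper likewise notes that rescaling by $l$ trivially preserves acylindricity and then cites \cite[Proposition 5.40]{DGO17} for the cone-off step, leaving the hypothesis-checking implicit. Your explicit verification of the rescaling step and of the hypotheses of that proposition is a correct elaboration of the same route.
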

Fix $r> 10^{10}\delta_U$. As \cite[Proposition 5.33]{DGO17} also shows that acylindricity is preserved by taking quotient of a normal subgroup generated by a $2r$-separated very rotating family, one has
\begin{lemma}\label{Lem: QuoAcyAction}
    The quotient action $G/\llangle g^k\rrangle \curvearrowright \Dot{Z}_r(\mathcal F)/\llangle g^k\rrangle$ is acylindrical.
\end{lemma}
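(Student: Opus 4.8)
The plan is to deduce the statement from the rotating family machinery of Dahmani--Guirardel--Osin, applied to the geometric setup assembled above. Recall that by Lemma \ref{Lem: RotFam} (with $n=1$) the pair $(A(\mathcal F), E_1)$, where $E_1 = \{f\langle g^k\rangle f^{-1} : f\in G\}$, is a $2r$-separated very rotating family on the $\delta_U$-hyperbolic cone-off $\Dot{Z}_r(\mathcal F)$, and that we have just proved the extended action $G \curvearrowright \Dot{Z}_r(\mathcal F)$ to be acylindrical. Since $r > 10^{10}\delta_U$, the separation $2r$ comfortably exceeds the threshold (measured against the hyperbolicity constant) required for the small cancellation results of \cite{DGO17} to apply.

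The first step is to identify $\llangle g^k\rrangle$ with the subgroup generated by the rotating family. The apex stabilizers are the conjugates $f\langle g^k\rangle f^{-1}$, $f\in G$, so the subgroup generated by $\bigcup_{a\in A(\mathcal F)} G_a$ is exactly $\langle f g^k f^{-1} : f\in G\rangle = \llangle g^k\rrangle$, which is normal in $G$. Consequently $\Dot{Z}_r(\mathcal F)/\llangle g^k\rrangle$ is precisely the quotient of the hyperbolic space by the group generated by a very rotating family; by \cite[Proposition 5.28]{DGO17} it is again a hyperbolic geodesic metric space, on which $G/\llangle g^k\rrangle$ acts isometrically via the induced action.

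The second, and main, step is to invoke \cite[Proposition 5.33]{DGO17}, which asserts that acylindricity of an action on a hyperbolic space carrying a sufficiently separated very rotating family descends to the induced action of the quotient group (by the subgroup generated by the family) on the quotient space. Feeding in $G \curvearrowright \Dot{Z}_r(\mathcal F)$ together with the family $(A(\mathcal F), E_1)$ then yields that $G/\llangle g^k\rrangle \curvearrowright \Dot{Z}_r(\mathcal F)/\llangle g^k\rrangle$ is acylindrical, which is the assertion.

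The only point requiring genuine care, rather than bookkeeping, is checking that the quantitative hypotheses of \cite[Proposition 5.33]{DGO17} are met: the apex set must be sufficiently separated relative to $\delta_U$ and the rotation angles must be large enough. Both are guaranteed by Lemma \ref{Lem: RotFam} and the choice of $r$, so no additional estimates are needed. A secondary check is that the input action $G \curvearrowright \Dot{Z}_r(\mathcal F)$ is acylindrical in the strong sense demanded by \cite{DGO17}, which is exactly the content of the lemma established immediately above.
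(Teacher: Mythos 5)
Your overall strategy is the same as the paper's: identify $\llangle g^k\rrangle$ with the normal subgroup generated by the rotating family $(A(\F),E_1)$ from Lemma \ref{Lem: RotFam}, and then apply \cite[Proposition 5.33]{DGO17} to the acylindrical action $G\curvearrowright \Dot{Z}_r(\F)$. However, there is a genuine gap at the point where you assert that ``no additional estimates are needed.'' Proposition 5.33 of \cite{DGO17} does not descend acylindricity from a sufficiently separated very rotating family for free: besides acylindricity of the input action and the separation/angle conditions, it carries an extra quantitative hypothesis on apex stabilizers, namely that there is a uniform $K>0$ such that for every apex $a\in A(\F)$ and every point $x$ with $d(x,a)=50\delta_U$, one has $\sharp\{h\in G: h(a)=a,\ d(x,hx)\le 10\delta_U\}\le K$. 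This condition is \emph{not} a consequence of acylindricity of $G\curvearrowright \Dot{Z}_r(\F)$, because acylindricity only controls elements that almost fix two points at \emph{large} mutual distance, whereas here $a$ and $x$ are at the fixed bounded distance $50\delta_U$. So it must be checked separately, and your proposal omits this check entirely.

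This verification is in fact the entire content of the paper's proof: the stabilizer of each apex of $\Dot{Z}_r(\F)$ is a conjugate of $E(g)$ (the full setwise stabilizer of the corresponding translate of $\ax(g)$, not merely the rotating subgroup $f\langle g^k\rangle f^{-1}$), and since $[E(g):\langle g\rangle]$ is finite, only boundedly many elements of a conjugate of $E(g)$ can move a point at distance $50\delta_U$ from the apex by at most $10\delta_U$; this yields the required uniform constant $K=K(g)$. Once you add this step, your argument matches the paper's proof.
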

\begin{proof}
    In order to apply \cite[Proposition 5.33]{DGO17} to get the conclusion, we need to verify that there exists $K>0$ such that for all $a\in A(\F)$ and for all $x$ with $|x-a|=50\delta_U$, $\sharp\{h\in G: h(a)=a, |x-h(x)|\le 10\delta_U\}\le K$. From the construction of $A(\F)$ above, the stabilizer of each apex in $A(\F)$ is exactly a conjugate of $E(g)$. As $[E(g): \langle g\rangle]$ is finite, there exists $K=K(g)$ with the desired property.
\end{proof}

Moreover, the quotient space $\Dot{Z}_r(\mathcal F)/\llangle g^k\rrangle$ is $60000\delta_U$-hyperbolic by \cite[Proposition 5.28]{DGO17}.

\begin{proof}[Proof of Proposition \ref{Prop: NormalClosure}]
    As Lemma \ref{Lem: QuoAcyAction} implies that the quotient group acts WPD on a hyperbolic space, the conclusion follows from Proposition \ref{Prop: WPDQuotient}.
\end{proof}

In the end of this section, we posed the following question:
\begin{question}
     Let $G$ be a non-elementary countable  group acting properly on a geodesic metric space $X$ with contracting elements. Let $H$ be a normal subgroup of $G$ with a non-amenable quotient. Is the dimension of $H^{2}_b(G, H; \mathbb{R})$ as a vector space over $\mathbb{R}$  infinite?
\end{question}

\bibliographystyle{amsplain}
\bibliography{Reference}

\end{document}